\setlist{topsep=0pt} 
\def\namedlabel#1#2{\begingroup
	#2%
	\def\@currentlabel{#2}%
	\phantomsection\label{#1}\endgroup
}
\newcommand{\Rd}{\color{RedOrange}}
\newcommand{\Bk}{\color{black}}
\def\calcLength(#1,#2)#3{%
	\pgfpointdiff{\pgfpointanchor{#1}{center}}%
	{\pgfpointanchor{#2}{center}}%
	\pgf@xa=\pgf@x%
	\pgf@ya=\pgf@y%
	\FPeval\@temp@a{\pgfmath@tonumber{\pgf@xa}}%
	\FPeval\@temp@b{\pgfmath@tonumber{\pgf@ya}}%
	\FPeval\@temp@sum{(\@temp@a*\@temp@a+\@temp@b*\@temp@b)}%
	\FProot{\FPMathLen}{\@temp@sum}{2}%
	\FPround\FPMathLen\FPMathLen5\relax
	\global\expandafter\edef\csname #3\endcsname{\FPMathLen}
}
\newcommand{\N}{\mathbb{N}}
\newcommand{\Z}{\mathbb{Z}}
\newcommand{\R}{\mathbb{R}}
\newcommand{\Cb}{\mathbb{C}} 
\newcommand{\Hb}{\mathbb{H}}
\newcommand{\Pb}{\mathbb{P}}
\newcommand{\GL}{\mathrm{GL}}
\newcommand{\SL}{\mathrm{SL}}
\newcommand{\SO}{\mathrm{SO}}
\newcommand{\G}{\Gamma}
\newcommand{\Cc}{\mathcal{C}}
\newcommand{\Ec}{\mathcal{E}}
\renewcommand{\O}{\Omega}
\newcommand{\Sp}{\textrm{Sp}}
\newcommand{\Id}{\mathrm{Id}}
\DeclareMathOperator{\id}{id}
\renewcommand{\leq}{\leqslant}
\renewcommand{\geq}{\geqslant}
\renewcommand{\varnothing}{\emptyset}
\newcommand{\Cart}{\mathscr{A}}
\theoremstyle{plain}
\newtheorem{theorem}{Theorem}[section]
\newtheorem{proposition}[theorem]{Proposition}
\newtheorem{corollary}[theorem]{Corollary}
\newtheorem{lemma}[theorem]{Lemma}
\theoremstyle{definition}
\newtheorem{example}[theorem]{Example}
\newtheorem{definition}[theorem]{Definition}
\theoremstyle{remark}
\newtheorem{remark}[theorem]{Remark}
\title{Zariski-Closures of Linear Reflection Groups}
\author[J. Audibert]{Jacques Audibert}
\address{Max Planck Institute for Mathematics in the Sciences, 04299 Leipzig, Germany}
\email{audibert.j@outlook.fr}
\author[S. Douba]{Sami Douba}
\address{Institut des Hautes \'Etudes Scientifiques, 
Universit\'e Paris-Saclay, 35 route de Chartres, 91440 Bures-sur-Yvette, France}
\email{douba@ihes.fr}
\author[G.-S. Lee]{Gye-Seon Lee}
\address{Department of Mathematical Sciences and Research institute of Mathematics, Seoul National University, Seoul 08826, South Korea}
\email{gyeseonlee@snu.ac.kr}
\author[L. Marquis]{Ludovic Marquis}
\address{Universit\'e de Rennes, CNRS, IRMAR - UMR 6625, 35000 Rennes, France}
\email{ludovic.marquis@univ-rennes.fr}
\subjclass[2020]{22E40, 20F55}
\newcounter{notes}%
\newcommand{\note}[1]{
	\refstepcounter{notes}  
	\nolinebreak
	$\hspace{-5pt}{}^{\text{\tiny \rm \arabic{notes}}}$
	\marginpar{\tiny \arabic{notes}) #1}
	}
\begin{document}

\begin{abstract}
We give necessary and sufficient conditions for a linear reflection group in the sense of Vinberg to be Zariski-dense in the ambient projective general linear group. As an application, we show that every irreducible right-angled Coxeter group of  rank $N \geq 3$ virtually embeds Zariski-densely in $\mathrm{SL}_n(\mathbb{Z})$ for all $n \geq N$. This allows us to settle the existence of Zariski-dense surface subgroups of $\mathrm{SL}_n(\mathbb{Z})$ for all $n \geq 3$. Among the other applications are examples of Zariski-dense one-ended finitely generated subgroups of $\mathrm{SL}_n(\mathbb{Z})$ that are not finitely presented for all $n \geq 6$.
\end{abstract}

\keywords{Discrete subgroups of Lie groups, Coxeter groups, reflection groups, thin subgroups}

\maketitle

\setlength{\parskip}{0em}
\tableofcontents
\setlength{\parskip}{1em}

\section{Introduction}


Due to the profound interplay between their geometric, combinatorial, and algebraic properties, reflection groups in real hyperbolic spaces $\mathbb{H}^d$ have long constituted a rich source of examples in hyperbolic geometry and the theory of Fuchsian and Kleinian groups. For $d=2$, their study dates back to 19th-century work of von Dyck, Klein, and Poincaré. Later, a right-angled reflection group in $\mathbb{H}^3$ gave rise to the first example of a closed hyperbolic $3$-manifold~\cite{lobell1931beispiele}. Andreev \cite{Andreev, Andreev2, roeder2007andreev} would go on to demonstrate the vast abundance of finite-covolume reflection groups in $\mathbb{H}^3$.

By regarding real hyperbolic spaces $\mathbb{H}^d$ in the hyperboloid model, reflection groups in~$\mathbb{H}^d$ can be viewed as instances of linear reflection groups in finite-dimensional real vector spaces. Here, a {\em reflection} of such a vector space $V$ is an order-2 endomorphism of $V$ fixing a linear hyperplane pointwise. In \cite{the_bible}, Vinberg gave necessary and sufficient conditions for the translates of a convex polyhedral cone $\widetilde{\Delta}\subset V$ under a group $\Gamma < \mathrm{GL}(V)$ generated by reflections in the walls of $\widetilde{\Delta}$ to ``tile.'' As demonstrated by Vinberg, the (necessarily discrete) subgroup $\Gamma$ is then naturally isomorphic to the Coxeter group determined by the combinatorics of this tiling; see \S \ref{sec:Vinberg-theory} for precise definitions and statements. 

From this perspective, reflection groups in $\mathbb{H}^d$ are distinguished within the overall family of linear reflection groups in that that they preserve a nondegenerate quadratic form of signature $(d,1)$ on $V=\mathbb{R}^{d+1}$. However, while there are abstract Coxeter groups that cannot be realized as reflection groups in $\mathbb{H}^d$ for any~$d$ (including some Gromov-hyperbolic examples \cite{felikson2005series, lee2019ads}), every finitely generated Coxeter group can be realized as a linear reflection group in an appropriate $V$ in the above sense \cite{Bourbaki_LiegroupsandLiealgebras46, the_bible}. Furthermore, as will be exploited in the sequel, a single Coxeter group (indeed, even a reflection group in $\mathbb{H}^d$) often admits many realizations as a linear reflection group (which may preserve a form that is not Lorentzian, or fail to preserve any form whatsoever). For these reasons and others, Vinberg's theory has emerged as an indispensable source of examples of infinite-covolume discrete subgroups of higher-rank semisimple Lie groups \cite{GyeSeonLudovic_DiscreteCoxeterGroups}.

Note that any subgroup of $\mathrm{GL}(V)$ generated by reflections is in fact contained in the subgroup $\mathrm{SL}^\pm(V)$ of endomorphisms of determinant $\pm 1$. Our main result provides sufficient (and evidently necessary) conditions for the Zariski-closure of a linear reflection group in $V$ in the sense of Vinberg to be the entire group $\mathrm{SL}^\pm(V)$.

\begin{theorem}\label{thm:coxeter}
Let $W$ be a finitely generated Coxeter group that is not virtually abelian and $\rho: W \rightarrow \mathrm{GL}(V)$ a representation of $W$ as a reflection group (see Def. \ref{def:refl_gp}). Suppose that $\rho$ is irreducible. 
\begin{enumerate}
\item\label{BdlH} If $\rho$ preserves a nonzero symmetric bilinear form $f$ on $V$, then the Zariski-closure of $\rho(W)$ is   the orthogonal group $\mathrm{O}_f(V)$ of $f$.

\item\label{ADLM} Otherwise, the Zariski-closure of $\rho(W)$ is $\mathrm{SL}^\pm(V)$.
\end{enumerate}
\end{theorem}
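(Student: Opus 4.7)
Since every reflection has determinant $-1$, $G := \overline{\rho(W)}^{\mathrm{Zar}}$ already lies in $\mathrm{SL}^{\pm}(V)$, and the plan is to pin down the identity component $G^0$ and then the component group $G/G^0$. I would begin with two reductions. First, $G^0$ is reductive: its unipotent radical is normal in $G$, so its nonzero fixed subspace is $G$-invariant, and irreducibility of $\rho$ forces this fixed subspace to be all of $V$, whence the unipotent radical is trivial. Second, I would reduce to the case that $G^0$ itself acts irreducibly on $V$. If instead the $G^0$-isotypic decomposition $V = V_1 \oplus \cdots \oplus V_k$ has $k \geq 2$ components, then $G/G^0$ permutes them transitively, so all $V_i$ share a common dimension $d$; since $W$ is generated by its reflections, some reflection $r \in \rho(W)$ must induce a nontrivial permutation, and a short computation of the $(-1)$-eigenspace of such an $r$ forces $d = 1$. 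But then $G^0$ acts on $V$ through a torus, making the finite-index subgroup $\rho(W) \cap G^0$ abelian; by faithfulness of $\rho$ (part of Vinberg's theory), this contradicts the hypothesis that $W$ is not virtually abelian.

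\textbf{Classification of $G^0$.} Now $G^0$ is a Zariski-connected reductive subgroup of $\mathrm{SL}(V)$ acting irreducibly, and $G \subseteq N_{\mathrm{GL}(V)}(G^0)$ contains reflections. The key assertion is that these constraints force $G^0 \in \{\mathrm{SL}(V),\ \mathrm{SO}_f(V)^0\}$ for some nondegenerate symmetric bilinear form $f$ on $V$. If $G^0$ preserved a nondegenerate alternating form, then $N_{\mathrm{GL}(V)}(G^0)$ would be contained in the symplectic similitude group $\mathrm{GSp}$, whose elements have eigenvalues paired as $\mu \leftrightarrow \lambda/\mu$; this pairing is incompatible with the asymmetric reflection spectrum $(1,\ldots,1,-1)$ as soon as $\dim V \geq 3$ (the case $\dim V = 2$ being excluded because irreducible rank-two Coxeter groups are virtually abelian). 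For any other irreducible reductive $G^0 \neq \mathrm{SL}(V)$ that preserves no bilinear form, the normalizer in $\mathrm{GL}(V)$ consists, up to scalars, of $G^0$ together with a small number of outer involutions coming from Dynkin-diagram symmetries, and a highest-weight case analysis should show that none of these outer elements act on $V$ with the reflection spectrum.

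\textbf{Conclusion and principal obstacle.} The two cases of the theorem then follow. In case~\ref{BdlH}, Schur's lemma applied to the irreducible $\rho$ shows the preserved symmetric form is unique up to scalar, so $G^0 = \mathrm{SO}_f(V)^0$; adjoining any reflection of $\rho(W)$ yields $G = \mathrm{O}_f(V)$. In case~\ref{ADLM}, the absence of a preserved symmetric form excludes the orthogonal alternative, so $G^0 = \mathrm{SL}(V)$, and a reflection of determinant $-1$ gives $G = \mathrm{SL}^{\pm}(V)$. I expect the main obstacle to be the second half of the classification step: while the spectrum argument instantly dispatches symplectic-type $G^0$, excluding every other irreducible reductive subgroup of $\mathrm{SL}(V)$ whose normalizer might accidentally contain a reflection requires a careful appeal to the structure theory of irreducible representations of real semisimple algebraic groups, and this is where I would anticipate the most technical work.
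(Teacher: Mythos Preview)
Your architecture matches the paper's---show $G^0$ is semisimple and acts irreducibly, then classify---but the classification tool differs, and this is where your proposal has a genuine gap. The paper does not attempt to classify irreducible semisimple $G^0$ whose normalizer in $\mathrm{GL}(V)$ contains a reflection. Instead it first proves (Proposition~\ref{prop_dlambda}), via quasi-Lann\'er standard subgroups and convex-projective geometry, that $\rho(W)$ always contains an element conjugate to $D(\lambda) = \mathrm{diag}(\lambda, 1, \ldots, 1, \lambda^{-1})$ for some real $\lambda > 1$. Since $D(\lambda)$ has infinite order, a power lies in $G^0$, and its Lie-algebra preimage $X$ kills all but two weights of the representation; this forces the representation to be quasi-minuscule with at most $2\,\mathrm{rk}(G^0)+1$ weights (Proposition~\ref{Proposition_Quasiminusculerepresentations}), and a short lookup then yields $G^0 \in \{\mathrm{SL}_n, \mathrm{SO}_n, \mathrm{Sp}_n\}$. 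The symplectic option is finally ruled out by Lemma~\ref{lemma_invariantbilinearform}: a nonzero bilinear form invariant under a group generated by reflections is automatically symmetric.

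Your reflection-in-normalizer approach has two concrete problems. First, your case split is incomplete: you dispose of ``$G^0$ preserves an alternating form'' and gesture at ``$G^0$ preserves no bilinear form,'' but you never address ``$G^0$ preserves a symmetric form yet $G^0 \subsetneq \mathrm{SO}(f)$.'' This is where $G_2 \subset \mathrm{SO}_7$, $\mathrm{Spin}_7 \subset \mathrm{SO}_8$, $F_4 \subset \mathrm{SO}_{26}$, and adjoint representations of higher-rank groups sit; your similitude-spectrum argument does not apply, and these groups have trivial outer automorphism group, so your outer-involution remark does not help either. Second, even granting the missing case, the phrase ``a highest-weight case analysis should show\ldots'' is where the entire content lies, and the reflection condition is genuinely less tractable than the $D(\lambda)$ condition: since a reflection has order $2$, you cannot pass to $G^0$ by taking powers; at best you obtain some $g \in G^0$ with $g^2$ central and exactly one eigenvalue of multiplicity one, which is a constraint on weights modulo a torsion element of the torus rather than a linear vanishing condition on a Cartan element. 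The paper's detour through $D(\lambda)$ is precisely the device that converts the classification into a clean weight-counting argument.
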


Item (\ref{BdlH}) in Theorem \ref{thm:coxeter} was proved by Benoist and de la Harpe \cite{BenoistHarpe_Zariski} assuming that~$\rho$ is the so-called {\em geometric representation} of $W$, though Item (\ref{BdlH}) in full generality follows from their proof. Our contribution to Theorem \ref{thm:coxeter} is Item (\ref{ADLM}), but our argument will treat simultaneously Items~(\ref{BdlH}) and (\ref{ADLM}), hence giving an alternative proof of Item (\ref{BdlH}). 

Theorem \ref{thm:coxeter} was proved by the fourth author \cite[Thm.~B]{cox_in_hil} under a certain {\em $2$-perfectness} condition. A particular instance where the latter condition is satisfied is when $\rho(W)$ preserves and acts cocompactly on a properly convex domain in $\mathbb{P}(V)$, in which case Theorem \ref{thm:coxeter} in fact already follows from previous work of Benoist \cite{benoist2003convexes}. However, this $2$-perfectness assumption can only be satisfied if the virtual cohomological dimension of~$W$ is equal to $\dim(\mathbb{P}(V))$ or to $\dim(\mathbb{P}(V))-1$.

Theorem \ref{thm:coxeter} is useful because it is in practice easy to verify if a representation of a Coxeter group as a reflection group is irreducible or preserves a form (see Theorem \ref{thm:tits-vinberg2}).

\subsection{Applications to thin subgroups of $\mathrm{SL}_n(\mathbb{Z})$}
\noindent One of our motivations in proving Theorem \ref{thm:coxeter} was to produce new examples of finitely generated Zariski-dense infinite-index subgroups of $\mathrm{SL}_n(\mathbb{Z})$, or {\em thin subgroups} in the language of Sarnak \cite{sarnak2014notes}. There has recently been an increased interest in such subgroups, owing in part to the fact that their congruence Schreier graphs mimic those of $\mathrm{SL}_n(\mathbb{Z})$ itself~\cite{golsefidy2012expansion, breuillard2015approximate}.

By leveraging a straightforward criterion due to Vinberg for determining whether an irreducible representation of a Coxeter group as a reflection group is integral (see Lemma \ref{lem:cyclic-products-integers}), we show that any irreducible right-angled Coxeter group of rank $N \geq 3$ virtually embeds as a thin\footnote{Note that $\mathrm{SL}_n(\mathbb{Z})$ is not abstractly commensurable to a Coxeter group for $n \geq 3$ since, for instance, infinite Coxeter groups lack Kazhdan's property (T) \cite{cox_gp_dont_have_T}.} subgroup of $\mathrm{SL}_n(\mathbb{Z})$ for all $n \geq N$.

\begin{theorem}\label{thm:virtually-Zariski-dense}
Let $W$ be an irreducible right-angled Coxeter group of rank $N$ with $3 \leq N < \infty$.  For each $n \geq N$, there is a finite-index reflection subgroup~$\Gamma_n$ of~$W$ and a  representation $\rho_n : \Gamma_n \rightarrow \mathrm{GL}_n(\mathbb{R})$ as a reflection group that embeds $\Gamma_n$ as a Zariski-dense subgroup of $\mathrm{SL}_n^\pm (\mathbb{Z})$.
Moreover, if the Coxeter diagram of $W$ is not a tree,\footnote{If the Coxeter diagram of~$W$ is a tree, then every representation $\rho: W \rightarrow \mathrm{GL}(V)$ of $W$ as a reflection group preserves a nonzero symmetric bilinear form on $V$.} then we may take $\Gamma_{N} = W$.
\end{theorem}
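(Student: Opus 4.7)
The plan is to apply Theorem~\ref{thm:coxeter}(\ref{ADLM}) to explicit reflection representations of finite-index subgroups of $W$. For each $n\geq N$, I aim to produce a finite-index reflection subgroup $\Gamma_n\leq W$ and a Vinberg reflection representation $\rho_n\colon\Gamma_n\to\GL_n(\R)$ that is (a) integer-valued, so that its image lies in $\SLpm_n(\Z)$, (b) irreducible, and (c) preserves no nonzero symmetric bilinear form. Given (a)--(c), Theorem~\ref{thm:coxeter}(\ref{ADLM}) immediately yields Zariski-density in $\SLpm_n(\R)$. Recall that such a representation is determined by a Cartan matrix $A$ indexed by the simple reflections of $\Gamma_n$, with $A_{ss}=2$, with $A_{st}=0$ when $st$ has order $2$, and with $A_{st},A_{ts}<0$ and $A_{st}A_{ts}\geq 4$ when $st$ has infinite order.

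I would first treat the base case $n=N$ under the non-tree hypothesis, justifying the choice $\Gamma_N=W$. Let $\mathcal{D}$ be the Coxeter diagram of $W$; by hypothesis $\mathcal{D}$ contains a cycle $s_1-s_2-\cdots-s_k-s_1$. For every edge $\{s,t\}$ of $\mathcal{D}$ I assign negative integer values $A_{st},A_{ts}$ with $A_{st}A_{ts}\geq 4$, arranged so that the cyclic product $A_{s_1 s_2}A_{s_2 s_3}\cdots A_{s_k s_1}$ differs from $A_{s_2 s_1}A_{s_3 s_2}\cdots A_{s_1 s_k}$ (for instance, take all entries equal to $-2$ except one flipped edge with $(A_{st},A_{ts})=(-1,-4)$). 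By Lemma~\ref{lem:cyclic-products-integers}, the integer entries ensure that $\rho_N$ is conjugate to a representation into $\SLpm_N(\Z)$, while the cyclic asymmetry forbids $A$ from being symmetrizable by a positive diagonal rescaling, ruling out any invariant nonzero symmetric bilinear form. Irreducibility follows from connectedness of $\mathcal{D}$: any invariant subspace $U\subset V$ partitions $S$ into those $s$ with $v_s\in U$ and those with $U\subset\ker\phi_s$, and connectedness forces one of these two sets to be all of $S$. Theorem~\ref{thm:coxeter}(\ref{ADLM}) then delivers Zariski-density in $\SLpm_N(\R)$.

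For $n>N$, or for $n=N$ with $\mathcal{D}$ a tree, the plan is to pass to a suitable finite-index reflection subgroup of richer combinatorial type. The main tool is the doubling construction: given $s\in S$, the kernel $K_s$ of the character $\chi_s\colon W\to\Z/2\Z$ that sends $s$ to $1$ and all other generators to $0$ is an index-$2$ reflection subgroup of $W$, generated by $\{t:t\in S\smallsetminus\{s\}\}\cup\{sts:t\in S,\,st\neq ts\}$, and hence of Coxeter rank $(N-1)+\deg_{\mathcal{D}}(s)$. Since $W$ is irreducible and $N\geq 3$, some $s\in S$ has $\deg_{\mathcal{D}}(s)\geq 2$, and for two neighbors $t_1,t_2$ of such an $s$ the four reflections $t_1,t_2,st_1s,st_2s$ yield a $4$-cycle in the diagram of $K_s$ (since each product $t_i\cdot st_is=(t_is)^2$ has infinite order, and likewise for the cross-pairs). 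Iterating the construction produces finite-index reflection subgroups of $W$ of arbitrarily large Coxeter rank and with non-tree diagrams.

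Given such a $\Gamma_n$ of Coxeter rank $N'\geq n$ with a cycle in its diagram, the final step is to select an integer Cartan matrix $A_n$ for $\Gamma_n$ that is non-symmetrizable around some cycle and of matrix rank exactly $n$, so that the associated Vinberg representation is irreducible in dimension $n$ (after quotienting by the $(N'-n)$-dimensional radical, which is defined over $\Z$ by the Smith normal form). I anticipate this rank-lowering step to be the principal obstacle: the parameter count is favorable---two integer slots per edge of $\mathcal{D}(\Gamma_n)$ against $N'-n$ linear constraints imposed by $\mathrm{rank}(A_n)=n$---but achieving this simultaneously with integrality, non-symmetrizability, and preservation of irreducibility after the quotient may require a construction tailored to each combinatorial type, perhaps via a variational argument on the space of admissible Cartan matrices. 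Once $A_n$ is produced, Theorem~\ref{thm:coxeter}(\ref{ADLM}) concludes the proof.
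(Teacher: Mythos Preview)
Your overall strategy is close to the paper's, but there are two genuine gaps.

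\textbf{Base case.} Your irreducibility argument for $n=N$ is incomplete. The partition argument you sketch only shows that any proper invariant subspace $U$ satisfies $V_v\subset U$ or $U\subset V_\alpha$; to conclude $U\in\{0,V\}$ you need $V_v=V$ and $V_\alpha=0$, which in dimension $N=|S|$ is equivalent to $\det A\neq 0$. Your specific choice (all entries $-2$ with one pair $(-1,-4)$) gives no control on this determinant; for many right-angled diagrams it will vanish. The paper handles this by introducing a one-parameter family $\mathscr{A}_t$ of integer Cartan matrices with $\mathscr{A}_0=2I_N$, so $\det\mathscr{A}_t$ is a nonzero polynomial in $t$ and a large integer $t=k$ works.

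\textbf{Inductive step.} The step you flag as the ``principal obstacle''---lowering the rank of an integer, non-symmetrizable Cartan matrix on a larger generating set $\Gamma_n$ down to exactly $n$---is not carried out; the parameter count is suggestive but does not produce a construction. The paper avoids this difficulty entirely by going in the opposite direction: starting from a Zariski-dense $\rho\colon W_S\to\SL_n^\pm(\Z)$ with $|S|\geq 3n+1$, it \emph{raises} the rank of the existing Cartan matrix from $n$ to $n+1$ by perturbing a single entry $\mathcal{A}_{i_0 j_0}$ at an $\infty$-edge whose endpoints $i_0,j_0$ lie outside a fixed set of $2n$ indices that already witness $V_v=\R^n$ and $V_\alpha=0$. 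A pigeonhole argument guarantees such an edge exists when $|S|\geq 3n+1$, and the perturbation visibly increases the rank by one while keeping all cyclic products integral; non-symmetrizability survives for all but at most one value of the perturbation parameter. Coupled with the doubling construction (your $K_s$) to enlarge $|S|$ as needed, this gives a clean induction on $n$ with no rank-lowering required.
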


That one passes to finite-index reflection subgroups in the statement of Theorem \ref{thm:virtually-Zariski-dense} is not merely a matter of convenience, since a subgroup of $\mathrm{SL}^\pm(V)$ generated by $N$ reflections will never act irreducibly on $V$, let alone be Zariski-dense in $\mathrm{SL}^\pm(V)$, if $\dim(V) > N$.

In the case that the Coxeter group $W$ in the statement of Theorem \ref{thm:virtually-Zariski-dense} is moreover Gromov-hyperbolic, then it follows from our proof together with work of Danciger--Gu\'eritaud--Kassel--Lee--Marquis \cite{danciger2023convex} that one can arrange for each of the $\rho_n$ to be $P_1$-Anosov in the sense of Labourie \cite{labourie2006anosov} and Guichard--Wienhard \cite{guichard2012anosov}, where $P_1$ denotes the stabilizer of a line in $\mathrm{SL}_n^\pm(\mathbb{R})$. Note that the restriction of $\rho_n$ to any finite-index subgroup of $\Gamma_n$ will then remain $P_1$-Anosov.

Theorem \ref{thm:virtually-Zariski-dense} allows us to conclude the following.


\begin{theorem}\label{thm:surfacesubgroups}
For any $n\geq 3$, there is a Zariski-dense subgroup of $\mathrm{SL}_n(\mathbb{Z})$ isomorphic to the fundamental group of a connected closed orientable surface of genus $\max\{n-3, \> 2\}$ for $n$ odd and genus $\max\{\frac{n-2}{2}, \> 2\}$ for $n$ even. 
\end{theorem}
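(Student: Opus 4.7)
The plan for $n \geq 5$ is to apply Theorem \ref{thm:virtually-Zariski-dense} to the right-angled Coxeter group $W = W_{C_n}$ defined by the $n$-cycle $C_n$. For $n \geq 5$, this $W$ is an irreducible right-angled Coxeter group of rank $n$ whose Coxeter diagram (the complement of $C_n$) contains a cycle, hence is not a tree; the ``moreover'' clause of Theorem \ref{thm:virtually-Zariski-dense} then supplies a representation $\rho \colon W \to \mathrm{GL}_n(\mathbb{R})$ as a reflection group embedding $W$ Zariski-densely into $\mathrm{SL}_n^{\pm}(\mathbb{Z})$. Recall moreover that $W$ acts cocompactly on $\mathbb{H}^2$ as the reflection group in the sides of a right-angled $n$-gon, which by Gauss--Bonnet has area $(n-4)\pi/2$, so the quotient orbifold has Euler characteristic $-(n-4)/4$. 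Any torsion-free finite-index subgroup $\Sigma$ of $W$ contained in the index-two orientation-preserving subgroup $W^+$ is therefore the fundamental group of a closed orientable surface of genus $1 + [W:\Sigma](n-4)/8$.

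To realize the target genus, I would produce $\Sigma$ explicitly as the kernel of a suitable surjection $\phi$. For $n$ even, set $\phi \colon W \to (\mathbb{Z}/2)^2$, $\phi(s_i) = (i \bmod 2, \, 1)$. For $n$ odd, fix a proper $3$-colouring $v_1, \dots, v_n$ of $C_n$ by three distinct elements of $(\mathbb{Z}/2)^2$ (possible since $C_n$ has chromatic number $3$), and set $\phi \colon W \to (\mathbb{Z}/2)^3$, $\phi(s_i) = (v_i, \, 1)$. In both cases $\phi$ descends from the Coxeter presentation (the target is abelian of exponent~$2$) and is surjective onto its codomain (for $n$ odd, using that three distinct points of $(\mathbb{Z}/2)^2$ affinely generate the plane); every non-identity element of the form $s_i$ or $s_i s_{i+1}$ in a finite parabolic subgroup of $W$ has non-zero image, so $\Sigma = \ker \phi$ is torsion-free; and the last coordinate of $\phi$ coincides with the sign (length-parity) homomorphism $W \to \mathbb{Z}/2$, so $\Sigma \subseteq W^+$. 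Hence $[W : \Sigma]$ equals $4$ or $8$ according to the parity of $n$, and $\Sigma$ is a closed orientable surface group of genus $(n-2)/2$ or $n-3$ respectively.

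It remains to check that $\rho(\Sigma) \leq \mathrm{SL}_n(\mathbb{Z})$ is Zariski-dense. Since each reflection $\rho(s_i)$ has determinant $-1$ and $\Sigma \subseteq W^+$ consists of even-length words in the $s_i$, one has $\rho(\Sigma) \subseteq \mathrm{SL}_n(\mathbb{Z})$. As $\rho(\Sigma)$ has finite index in the Zariski-dense subgroup $\rho(W) \subseteq \mathrm{SL}_n^{\pm}(\mathbb{R})$, its Zariski closure is an algebraic subgroup of $\mathrm{SL}_n^{\pm}$ of finite index lying inside $\mathrm{SL}_n$, and Zariski-connectedness of $\mathrm{SL}_n$ then forces this closure to be $\mathrm{SL}_n$ itself. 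The small cases $n = 3$ and $n = 4$ (both yielding genus $2$) fall outside the scope of this construction, since no irreducible right-angled Coxeter group of rank $\leq 4$ virtually contains a surface subgroup of genus $\geq 2$ (every such group is virtually free or virtually abelian); these cases are however already known, by Long--Reid--Thistlethwaite for $n = 3$ and Long--Thistlethwaite for $n = 4$. The step I expect to require the most care is the construction of $\phi$, in particular the simultaneity of the three conditions --- torsion-free kernel, kernel in $W^+$, and image of the correct cardinality --- with the odd-$n$ case hinging on the affine-generation fact in $(\mathbb{Z}/2)^2$.
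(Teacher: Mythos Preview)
Your argument is correct and, for $n\geq 5$, follows essentially the same route as the paper: both apply the ``moreover'' clause of Theorem~\ref{thm:virtually-Zariski-dense} (equivalently Proposition~\ref{prop:thin}) to the right-angled $n$-gon reflection group $W_n$, and then pass to a torsion-free orientation-preserving subgroup of the appropriate index. The paper obtains that subgroup by citing Edmonds--Ewing--Kulkarni, whereas you construct it explicitly via the colouring homomorphisms $\phi$; your construction is a clean self-contained replacement for that citation, and the verification that $\ker\phi$ is torsion-free, contained in $W^+$, and of the right index is entirely correct.

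The one substantive difference is the case $n=4$. You defer to Long--Thistlethwaite, which is valid (their $(3,3,4)$-triangle-group construction does give genus~$2$). The paper instead stays within its own framework: it exhibits an explicit integral Cartan matrix compatible with $W_5$ that has rank~$4$ and is not symmetrizable, then applies Corollary~\ref{cor:integer_representation} to obtain a Zariski-dense representation $W_5\to\mathrm{SL}_4^\pm(\mathbb{Z})$, whose genus-$2$ surface subgroup furnishes the desired example. This has the advantage of being self-contained and of illustrating that the paper's methods handle the dimension drop as well. For $n=3$ both approaches ultimately reduce to the Kac--Vinberg example (your citation of Long--Reid--Thistlethwaite points to the same surface subgroup). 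Your side remark that every irreducible right-angled Coxeter group of rank $\leq 4$ is virtually free or virtually abelian is correct and justifies why these small cases genuinely require a different input.
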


The existence of Zariski-dense\footnote{We recall that, for $n=3,4$, certain arithmetic lattices in $\mathrm{SO}_{n-1,1}(\mathbb{R})$ yield non-Zariski-dense closed hyperbolic $(n-1)$-manifold groups in $\mathrm{SL}_n(\mathbb{Z})$.} (closed) surface subgroups of $\mathrm{SL}_n(\mathbb{Z})$, regardless of the genus, was previously only known for $n=2k+1$, $k \geq 1$, and for $n=4$. We discuss this history briefly. Note first that $\mathrm{SL}_2(\mathbb{Z})$ is virtually a free group and hence contains no surface subgroups whatsoever. Kac and Vinberg \cite{vinberg1967quasi} constructed Zariski-dense surface subgroups of $\mathrm{SL}_3(\mathbb{Z})$ via hyperbolic triangle groups in a work of great relevance to this paper; see also \cite{long2011zariski}. Long and Thistlethwaite \cite{long2018zariski, Long_Thistlethwaite_Surface_groups} later constructed Zariski-dense surface subgroups in $\mathrm{SL}_4(\mathbb{Z})$ and $\mathrm{SL}_5(\mathbb{Z})$, and then in $\mathrm{SL}_{2k+1}(\mathbb{Z})$ for all $k \geq 1$, though the latter work does not appear to provide any explicit control on the genus as $k$ grows. That $\mathrm{SL}_{2k+1}(\mathbb{Z})$ contains Zariski-dense surface subgroups for all $k \geq 1$ was also announced by Burger--Labourie--Wienhard \cite[Theorem~24]{wienhard2018invitation}. We remark that the surface subgroups mentioned in this paragraph are all Hitchin, whereas any surface subgroup of $\mathrm{SL}_n(\mathbb{R})$ that is of finite index in a linear reflection group in $\mathbb{R}^n$ for $n \geq 4$, and, in particular, the examples in Theorem \ref{thm:surfacesubgroups} for $n \geq 4$, will fail to be Hitchin, as follows from Proposition \ref{prop_dlambda} and \cite[Thm.~1.5]{labourie2006anosov}. For other examples of lattices in split groups admitting thin Hitchin surface subgroups, see \cite{audibert2022zariski, audibert2023zariski}.

It indeed seems reasonable to expect that any irreducible lattice in a semisimple real algebraic group that is not  $\mathrm{PSL}_2(\mathbb{R})$ up to compact groups contains a thin surface subgroup, though the problem of constructing any surface subgroups at all has proved difficult. Nevertheless, the dynamical industry initiated by Kahn and Markovi\'c \cite{kahn2012immersing} for constructing surface subgroups frequently gives rise to Zariski-dense such subgroups; for more on this approach, see \cite{kassel2022groupes} and the references therein.

Apart from surface groups, a wide range of groups are commensurable with irreducible right-angled Coxeter groups of rank $\geq 3$, and hence virtually embed as Zariski-dense subgroups of $\mathrm{SL}_n(\mathbb{Z})$ for all sufficiently large $n$ by Theorem \ref{thm:virtually-Zariski-dense}. These include
\begin{itemize}
\item the fundamental groups of certain closed hyperbolic manifolds of dimensions $3$ and $4$;
\item the fundamental groups of certain complete finite-volume hyperbolic $n$-manifolds for $n \leq 8$ \cite{potyagailo2005rightangled};
\item various exotic Gromov-hyperbolic groups, e.g., certain hyperbolic groups with $3$-sphere boundary that do not (even virtually) embed discretely in $\mathrm{Isom}(\mathbb{H}^4)$, as well as some hyperbolic Poincar\'e duality groups of dimension $4$ whose boundaries are not topological $3$-spheres \cite{przytycki2009flag};
\item irreducible right-angled Artin groups of rank $\geq 2$ \cite{RightAngledCommensurableCoxeter}.
\end{itemize}
However, the minimal dimension $N$ of the embedding guaranteed by Theorem \ref{thm:virtually-Zariski-dense} may be large depending on the group. For example, any irreducible right-angled Coxeter group that is abstractly commensurable with a closed hyperbolic $3$-manifold group has rank $\geq 12$, since, by Mostow rigidity, any such Coxeter group can be realized as a cocompact reflection group acting on $\mathbb{H}^3$ (see \cite[Lemma~5.4]{lee2019ads} and the references therein). By applying Theorem \ref{thm:coxeter} to certain non-right-angled Coxeter groups, we are nevertheless able to produce some new examples of thin groups in lower dimensions as well.

\begin{theorem}\label{thm:hyperbolic_manifolds}
There exist
\begin{itemize}
\item a closed hyperbolic $3$-manifold whose fundamental group virtually embeds Zariski-densely in $\mathrm{SL}_n(\mathbb{Z})$ for all $n \geq 4$ (see Prop.~\ref{prop:application-three-manifold});
\item a complete finite-volume hyperbolic $4$-manifold whose fundamental group virtually embeds Zariski-densely in $\mathrm{SL}_n(\mathbb{Z})$ for all $n \geq 5$ (see Prop.~\ref{prop:application-four-manifold});
\item for every $p \geq 4$, a closed aspherical $p$-manifold whose fundamental group virtually embeds Zariski-densely in $\mathrm{SL}_n(\mathbb{Z})$ for all $n \geq 2p$ (see Prop.~\ref{prop:application-aspherical-manifold}).
\end{itemize}
\end{theorem}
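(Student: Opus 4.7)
The three bullets share a common recipe: produce a Coxeter group $W$ and a family of integral reflection representations $\rho_n : W \to \mathrm{GL}_n(\mathbb{Z})$ (in the sense of Definition \ref{def:refl_gp}), apply Theorem \ref{thm:coxeter} (or Theorem \ref{thm:virtually-Zariski-dense}) to secure Zariski-density in $\mathrm{SL}^{\pm}_n(\mathbb{R})$, then invoke Selberg's lemma and pass to the finite-index subgroup $\rho_n^{-1}(\mathrm{SL}_n(\mathbb{Z})) \cap \Gamma_0 \leq W$, where $\Gamma_0$ is a torsion-free finite-index subgroup of $W$ realising the target manifold.

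For the closed hyperbolic $3$-manifold, my plan is to fix a cocompact Coxeter polytope $\Delta \subset \mathbb{H}^3$ of small rank, such as one of the five L\"anner tetrahedra or a pentahedron, and, for each $n \geq 4$, to write down a Vinberg–Cartan matrix encoding a deformation $\rho_n$ of the geometric representation into $\mathrm{GL}_n(\mathbb{R})$ using Theorem \ref{thm:tits-vinberg2}. The Cartan entries will be chosen integer-valued by appeal to the cyclic-product criterion of Lemma \ref{lem:cyclic-products-integers}, the off-diagonal signs arranged so that $\rho_n$ is irreducible, and, for $n > 4$, additional degrees of freedom used to ensure $\rho_n$ preserves no nonzero symmetric bilinear form, placing us in case \eqref{ADLM} of Theorem \ref{thm:coxeter}. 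The $n=4$ case falls instead into case \eqref{BdlH}, where Zariski-density in $\mathrm{O}_{3,1}(\mathbb{R})$ still suffices to produce a thin subgroup of $\mathrm{SL}_4(\mathbb{Z})$ because $\mathrm{O}_{3,1}(\mathbb{Z})$ is a lattice in $\mathrm{O}_{3,1}(\mathbb{R})$.

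For the finite-volume hyperbolic $4$-manifold, the recipe is identical, starting this time from a non-cocompact finite-covolume hyperbolic Coxeter simplex in $\mathbb{H}^4$ (say one of the simplices in Vinberg's classification), and building integral deformations in each dimension $n \geq 5$. For the closed aspherical $p$-manifold with $p \geq 4$, the plan is different: one uses Davis's reflection-group construction, exhibiting an irreducible right-angled Coxeter group $W$ of rank $2p$ commensurable with the fundamental group of a closed aspherical $p$-manifold $M$ (the Davis complex of a suitable flag $(p-1)$-sphere), and then simply quotes Theorem \ref{thm:virtually-Zariski-dense} to obtain the virtual Zariski-dense embeddings into $\mathrm{SL}_n(\mathbb{Z})$ for every $n \geq 2p$.

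The main obstacle in the first two bullets is the simultaneous control of three constraints on $\rho_n$ — integrality of the Cartan entries, irreducibility of the action, and (for $n$ above the geometric dimension) non-preservation of any bilinear form — as $n$ varies, which forces one to set up the Cartan data with enough free parameters while still honouring the combinatorial pattern of the underlying Coxeter diagram. The main obstacle for the third bullet is combinatorial: realising a closed aspherical $p$-manifold by an \emph{irreducible} right-angled Coxeter group of rank exactly $2p$, as the most obvious candidate (the boundary of the $p$-dimensional cross-polytope) gives a reducible Coxeter group, so a less symmetric flag triangulation of $S^{p-1}$ is needed.
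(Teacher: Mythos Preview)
Your plan has a structural gap affecting all three bullets: an irreducible representation of $W_S$ as a reflection group has dimension $\mathrm{rank}(\Cart) \leq |S|$ (Theorem~\ref{thm:tits-vinberg2}.\eqref{item:irreducible_rep}), so a fixed Coxeter group of rank $4$ or $5$ cannot carry irreducible reflection representations in arbitrarily large dimension~$n$. Starting from a single Lann\'er tetrahedron or a simplex in $\mathbb{H}^4$ therefore yields at most one value of $n$, not the full range $n \geq 4$ or $n \geq 5$. The paper's device is to construct, for each $k \geq 1$, a finite-index reflection subgroup $W_k \leq W_1$ whose rank grows linearly in $k$, obtained by iteratively ``unfolding'' a fixed Coxeter polytope across a pair of disjoint facets, and then to apply Theorem~\ref{thm:CartanMatrixOfFullRank} to each $W_k$ separately. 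A single torsion-free finite-index subgroup $\Gamma \leq W_1$ realises the manifold and meets every $W_k$ in finite index, furnishing all the required virtual embeddings.

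Two further errors. Your treatment of $n = 4$ is wrong: a subgroup of $\mathrm{O}_{3,1}(\mathbb{Z})$ has Zariski closure contained in $\mathrm{O}_{3,1}$, hence is \emph{not} Zariski-dense in $\mathrm{SL}_4(\mathbb{R})$, which is what the statement requires. The paper instead takes $W_1$ of rank~$5$ (a triangular prism) and exhibits a rank-$4$ \emph{non}-symmetrizable integral Cartan matrix compatible with it, so that case~(\ref{ADLM}) of Theorem~\ref{thm:coxeter} applies via Corollary~\ref{cor:integer_representation}. For the third bullet, the obstacle you name is genuine and unresolved by your proposal: any irreducible right-angled Coxeter group whose Davis complex is a closed $p$-manifold has rank strictly greater than $2p$ (the rank-$2p$ cross-polytope group being reducible), so Theorem~\ref{thm:virtually-Zariski-dense} would only reach $n > 2p$. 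The paper sidesteps this by using a \emph{non}-right-angled group of rank $2p$ (with edge labels $3$, $4$, $\infty$) together with the same unfolding trick and Theorem~\ref{thm:CartanMatrixOfFullRank}.
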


Whether there exists a complete finite-volume hyperbolic $3$-manifold whose fundamental group embeds in $\mathrm{SL}_3(\mathbb{Z})$ is an open question of Long and Reid \cite{long2011small}. Previously, it was known to experts (see \cite{ChoiChoi} and the references therein) that, in the spirit of Kac and Vinberg, there are compact hyperbolic Coxeter simplices in dimensions $3$ and $4$ whose associated Coxeter groups admit Zariski-dense representations as reflection groups into $\mathrm{SL}_4^\pm(\mathbb{Z})$ and $\mathrm{SL}_5^\pm(\mathbb{Z})$, respectively. These representations then give rise to thin closed hyperbolic $3$- and $4$-manifold groups in $\mathrm{SL}_4(\mathbb{Z})$ and $\mathrm{SL}_5(\mathbb{Z})$, respectively. For other manifestations of hyperbolic $d$-manifold groups as thin subgroups of lattices in $\mathrm{SL}_{d+1}(\mathbb{R})$, see \cite{long2014constructing, ballas2015constructing, ballas2020constructing, ballas2020thin}.

\subsection{Zariski-dense witnesses to incoherence of $\mathrm{SL}_n(\mathbb{Z})$}
\noindent
It follows from the simplicity of $\mathrm{SL}_n(\mathbb{R})$ that infinite normal subgroups of Zariski-dense subgroups of $\mathrm{SL}_n(\mathbb{R})$ remain Zariski-dense in $\mathrm{SL}_n(\mathbb{R})$. The phenomenon of ``virtual (algebraic) fibering'' of certain Coxeter groups then yields via the techniques of this paper  some exotic thin subgroups of $\mathrm{SL}_n(\mathbb{Z})$, as illustrated by Theorem \ref{thm:incoherence} below.

A group $\Gamma$ is said to be {\em coherent} if all finitely generated subgroups of $\Gamma$ are finitely presented, and {\em incoherent} otherwise. We will call a finitely generated subgroup of $\Gamma$ that is not finitely presented a {\em witness to incoherence} of $\Gamma$. We focus here on the case $\Gamma = \mathrm{SL}_n(\mathbb{Z})$ for some $n \geq 2$; for a broader perspective on coherence, see Wise's survey \cite{wise2020invitation}, and for a discussion on coherence in the context of lattices in semisimple Lie groups, see the introduction of \cite{kapovich2013noncoherence}.

That $\mathrm{SL}_2(\mathbb{Z})$ is coherent can be deduced for instance from the fact that finitely generated Fuchsian groups are geometrically finite. Whether $\mathrm{SL}_3(\mathbb{Z})$ is coherent is a well-known question of Serre \cite[Prob.~F14]{wall1979homological} and remains open; indeed, it appears that all known examples of thin subgroups of $\mathrm{SL}_3(\mathbb{Z})$ are abstractly commensurable to Fuchsian groups. 
Since $F_2 \times F_2$ is incoherent \cite[Example~9.22]{wise2020invitation}, where~$F_2$ denotes the free group of rank two, the existence of an $F_2 \times F_2$ subgroup of $\mathrm{SL}_4(\mathbb{Z})$ precludes coherence of $\mathrm{SL}_n(\mathbb{Z})$ for $n \geq 4$. Note however that $F_2 \times F_2$ cannot embed Zariski-densely in a simple Lie group, and in particular, no witness to incoherence of $\mathrm{SL}_n(\mathbb{Z})$ that is contained within an $F_2 \times F_2$ subgroup of $\mathrm{SL}_n(\mathbb{Z})$ will be Zariski-dense in $\mathrm{SL}_n(\mathbb{R})$. This led Stover \cite{stover2019coherence} to ask whether there are Zariski-dense witnesses to incoherence of $\mathrm{SL}_n(\mathbb{Z})$ for $n \geq 4$. 

It was pointed out to the second author by Konstantinos Tsouvalas that there are witnesses to incoherence of $\mathrm{SL}_n(\mathbb{Z})$ for $n \geq 5$ that decompose as $\Delta * F$, where $\Delta$ is a witness to incoherence of $\mathrm{SO}_{4,1}(\mathbb{Z})$ contained within an infinite-index geometrically finite subgroup of the latter (see \cite{MR2484708}) and $F$ is a Zariski-dense free subgroup of $\mathrm{SL}_n(\mathbb{Z})$. For $n \geq 6$, we exhibit in \S \ref{sec:incoherence} some Zariski-dense witnesses to incoherence of $\mathrm{SL}_n(\mathbb{Z})$ of a different nature.

\begin{theorem}\label{thm:incoherence}
	For each $n \geq 6$, there is a Zariski-dense \emph{one-ended} finitely generated subgroup of $\mathrm{SL}_n(\mathbb{Z})$ that is not finitely presented.
\end{theorem}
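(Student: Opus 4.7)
The overall strategy is to combine Theorem \ref{thm:virtually-Zariski-dense} with a virtual algebraic fibering of a carefully chosen right-angled Coxeter group, then promote the resulting normal subgroup to a Zariski-dense one using the simplicity of $\mathrm{PSL}_n(\mathbb{R})$.

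First, I would produce an irreducible right-angled Coxeter group $W$ of some rank $N \leq 6$ admitting a finite-index subgroup $W_0 \leq W$ and an epimorphism $\varphi : W_0 \twoheadrightarrow \mathbb{Z}$ whose kernel $K := \ker \varphi$ is finitely generated, one-ended, and not finitely presented. Such examples are provided by Bestvina--Brady-type theory adapted to right-angled Coxeter groups (via the Davis--Januszkiewicz embedding of right-angled Artin groups into right-angled Coxeter groups, or via the direct virtual fibering constructions for RACGs in the literature). The idea is to choose the Coxeter nerve $L$ to be a flag complex on at most six vertices that is (i) connected but not simply connected, so the Bestvina--Brady-style kernel is finitely generated but not finitely presented; (ii) not a join, so $W = W_L$ is irreducible; and (iii) equipped with additional combinatorial hypotheses (e.g., no induced $4$-cycle, forcing $W_L$ to be word-hyperbolic) guaranteeing one-endedness of the kernel.

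Given such a $W$, Theorem \ref{thm:virtually-Zariski-dense} furnishes, for every $n \geq N$ (in particular every $n \geq 6$), a finite-index reflection subgroup $\Gamma \leq W$ and a representation $\rho : \Gamma \to \mathrm{SL}_n^{\pm}(\mathbb{Z})$ whose image is Zariski-dense in $\mathrm{SL}_n^{\pm}(\mathbb{R})$. Setting $\Gamma_0 := \Gamma \cap W_0$, the subgroup $K_0 := K \cap \Gamma_0 = \ker(\varphi|_{\Gamma_0})$ has finite index in $K$ and so inherits one-endedness, finite generation, and the failure of finite presentability, all of which are commensurability invariants among finitely generated groups. Moreover, $K_0$ is normal in $\Gamma_0$ with quotient isomorphic to a finite-index subgroup of $\mathbb{Z}$, so $K_0$ is infinite. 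By the simplicity of $\mathrm{PSL}_n(\mathbb{R})$, the Zariski-closure of the infinite normal subgroup $\rho(K_0)$ of the Zariski-dense $\rho(\Gamma_0)$ cannot be contained in the (finite) center of $\mathrm{SL}_n^{\pm}(\mathbb{R})$, and therefore equals $\mathrm{SL}_n^{\pm}(\mathbb{R})$. Intersecting with $\mathrm{SL}_n(\mathbb{Z})$, which is finite-index in $\mathrm{SL}_n^{\pm}(\mathbb{Z})$, yields the desired Zariski-dense subgroup of $\mathrm{SL}_n(\mathbb{Z})$.

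The main obstacle is the first step: producing an irreducible right-angled Coxeter group of rank at most six that virtually algebraically fibers with a one-ended, finitely generated, but not finitely presented kernel. The finiteness properties of the kernel are governed by the homotopy type of the Coxeter nerve via Bestvina--Brady theory, but arranging one-endedness requires additional combinatorial input: word-hyperbolicity of $W$ alone does not suffice, since Rips' constructions exhibit finitely generated normal subgroups of hyperbolic groups that fail to be hyperbolic themselves, so ruling out splittings over finite subgroups of $K$ must be done by hand (or by a direct analysis of the BNSR invariants of the ambient RACG).
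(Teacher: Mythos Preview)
Your overall architecture matches the paper's: produce a Coxeter group that virtually algebraically fibers with a finitely generated, non-finitely-presented, one-ended kernel; embed it Zariski-densely via the paper's reflection-group machinery; and push Zariski-density down to the kernel using simplicity of $\mathrm{PSL}_n(\mathbb{R})$. That part is fine.

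The gap is exactly the one you flag, and it is fatal for the bound $n \geq 6$ via right-angled groups. Going through Bestvina--Brady plus the Davis--Januszkiewicz embedding doubles the vertex count, so a non-simply-connected flag complex would need at most three vertices---impossible. Direct virtual fibering results for right-angled Coxeter groups (Jankiewicz--Norin--Wise, Kielak) do not, as far as the paper records, produce a rank~$\leq 6$ irreducible example with a finitely generated, non-finitely-presented, one-ended fiber; the paper's own closing remark carries out precisely your RACG strategy using the right-angled $120$-cell and obtains only $n \geq 120$, with one-endedness deferred to forthcoming work of Fisher--Italiano--Kielak. So your proposal, as stated, does not reach $n = 6$.

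The paper instead abandons right-angled groups and uses Theorem~\ref{thm:CartanMatrixOfFullRank} rather than Theorem~\ref{thm:virtually-Zariski-dense}. It starts from a compact hyperbolic triangular Coxeter prism $P \subset \mathbb{H}^3$ with a $(p,q,r)$-face $F$, $p,q,r \in \{3,4,6\}$, orthogonal to its neighbors. Adjoining $m$ extra pairwise-commuting involutions that interact with the rest of $S$ exactly as the reflection in $F$ does yields a Coxeter group $W_m$ of rank $5+m$, and Theorem~\ref{thm:CartanMatrixOfFullRank} embeds $W_m$ Zariski-densely in $\mathrm{SL}^\pm_{5+m}(\mathbb{Z})$. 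The kernel of the retraction onto $\langle s, s_1,\dots,s_m\rangle$ is the fundamental group of a complex built from $2^m$ copies of the double of $P$ across $F$, glued along the totally geodesic surface $\Sigma = F$. Passing to a finite cover that fibers over $S^1$ (Haglund--Wise, Agol), one gets an infinite-cyclic cover whose $\pi_1$ is a finite graph of groups with closed-surface vertex groups and infinite edge groups; Lemma~\ref{lemma:ends} gives one-endedness, and a Milnor Euler-characteristic argument gives non-finite-presentability. This is what makes $n = 6$ accessible: the $3$-dimensional hyperbolic input keeps the rank at $5+m$ with $m \geq 1$, and the fibering is classical $3$-manifold fibering rather than Bestvina--Brady.
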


Previously, the second author described in \cite{douba2024novel} a certain Zariski-dense witness to incoherence of $\mathrm{SL}_5(\mathbb{Z})$ constructed, as in the proof of Theorem \ref{thm:incoherence}, via a virtual fiber subgroup of a linear reflection group. It follows from forthcoming work of Fisher--Italiano--Kielak \cite{fisher2025virtual}
that the example in~\cite{douba2024novel} is also one-ended. By the Scott core theorem \cite{scott1973finitely}, no subgroup of a linear reflection in $\mathbb{R}^4$ will serve as a witness to incoherence of $\mathrm{SL}_4(\mathbb{Z})$, since such a reflection group preserves and acts properly on a domain in $\mathbb{P}(\mathbb{R}^4)$. We remain unaware of a Zariski-dense witness to incoherence of $\mathrm{SL}_4(\mathbb{Z})$, one-ended or otherwise.

\subsection{Relation to previous work of Benoist}
\noindent
A representation $\rho$ as in the setting of Theorem \ref{thm:coxeter} preserves a properly convex domain in the projective space $\mathbb{P}(V)$. In \cite{auto_convex_benoist}, Benoist described the Zariski-closures of irreducible representations (of arbitrary groups) preserving such a domain in~$\mathbb{P}(V)$. Our proof of Theorem \ref{thm:coxeter} however does not rely on Benoist's description, nor are we aware of a more straightforward proof that does so.

\subsection*{Organization of the paper} In \S\ref{sec:Vinberg-theory}, we review Vinberg's theory of linear reflection groups. Sections \ref{sec:reflection-groups-contain-dlambda}, \ref{sec:lie-groups-containing-dlambda}, and \ref{sec:proof-of-main} are devoted to the proof of Theorem \ref{thm:coxeter}. In \S\ref{sec:thin-reflection-groups}, we give the first applications of Theorem \ref{thm:coxeter}, and in particular, prove Theorems \ref{thm:virtually-Zariski-dense} and \ref{thm:surfacesubgroups}. We present in \S\ref{sec:thin-hyperbolic-manifold-groups} some constructions of thin hyperbolic manifold groups via linear reflection groups (Theorem~\ref{thm:hyperbolic_manifolds}). Finally, we prove Theorem \ref{thm:incoherence} in \S\ref{sec:incoherence}.

\subsection*{Acknowledgements}

The second author thanks Sam P. Fisher, Giovanni Italiano, and Dawid Kielak for sharing their work on one-endedness of fiber subgroups. The second author became involved in this project while on a visit to the Jagiellonian University, Krak\'ow, in early 2024, and wishes to thank Miko{\l}aj Fr\k{a}czyk and the entire Dioscuri Centre ``Random Walks in Geometry and Topology'' for their hospitality. The third and fourth authors are especially grateful to Martin D. Bobb, since they started thinking about this topic with him several years ago. We also thank Fanny Kassel, Robbie Lyman, and Beatrice Pozzetti for helpful conversations.

This work was supported by a grant from the Fondation Math\'ematique Jacques Hadamard. J.A. acknowledges the support of the Max-Planck Institute for Mathematics (Bonn), and of the Max-Planck Institute for Mathematics in the Sciences (Leipzig). S.D. was supported by the Huawei Young Talents Program. G.L. was supported by the European Research Council under ERC-Consolidator Grant 614733 and by the National Research Foundation of Korea (NRF) grant funded by the Korea government (MSIT) (RS-2023-00252171). L.M. acknowledges support by the Centre Henri Lebesgue (ANR-11-LABX-0020 LEBESGUE),  ANR G\'eom\'etries de Hilbert sur tout corps valu\'e (ANR-23-CE40-0012) and  ANR Groupes Op\'erant sur des FRactales (ANR-22-CE40-0004). 

\section{Vinberg's theory of reflection groups} \label{sec:Vinberg-theory}

\subsection{Coxeter groups}

A \emph{Coxeter matrix} $M$ on a finite set $S$ is a symmetric $S \times S$ matrix $M=(m_{st})_{s,t \in S}$ with entries $m_{st} \in \{ 1, 2, \dotsc, \infty \} $ with diagonal entries $m_{ss}=1$ and off-diagonal entries $m_{st} \neq 1$. To a Coxeter matrix $M$ is associated a \emph{Coxeter group} $W_S$: the group presented by the set of generators $S$ and the relations $(st)^{m_{st}}=1$ for each $(s,t) \in S \times S$ with $m_{st} \neq \infty$. To be more precise, we will take the datum of a Coxeter group $W_S$ to include the generating set~$S$. The cardinality of $S$ is called the \emph{rank} of the Coxeter group $W_S$ and is denoted by $\textrm{rank}(W_S)$. We say $W_S$ is {\em right-angled} if $m_{st} \in \{2, \infty\}$ for all $s,t \in S$.

\smallskip

The combinatorics of a Coxeter group $W_S$ are encoded in a labeled simplicial graph called the \emph{Coxeter diagram} $\mathcal{G}_{W_S}$ of $W_S$, defined as follows. The vertex set of $\mathcal{G}_{W_S}$ is $S$. Two vertices $s,t \in S$ are joined by an edge $\overline{st}$ if and only if $m_{st} \in \{ 3, 4, \dotsc, \infty \} $, and the label of the edge $\overline{st}$ is~$m_{st}$. It is customary to omit the label of the edge $\overline{st}$ if $m_{st} = 3$. 

\smallskip

For any subset $T$ of $S$, the $T \times T$ submatrix of $M$ is a Coxeter matrix on $T$. We may identify~$W_{T}$ with the subgroup of $W_S$ generated by $T$ (see \cite[Chap.~4, \S1.8]{Bourbaki_LiegroupsandLiealgebras46}). Such a subgroup is called a \emph{standard subgroup} of $W_S$.

\smallskip

An element $\gamma \in W_S$ is a {\em reflection} if $\gamma$ is conjugate within $W_S$ to an element of $S$. A subgroup $\Gamma < W_S$ is a {\em reflection subgroup} if $\Gamma$ is generated by reflections. Such a subgroup $\Gamma$ is finitely generated if and only if $\Gamma$ is generated by finitely many reflections, in which case $\Gamma$ is naturally isomorphic to a Coxeter group \cite{deodhar1989note, dyer1990reflection}. Note that standard subgroups of $W_S$ are examples of finitely generated reflection subgroups. Note also that if $W_S$ is right-angled, then the same is true of any of its finitely generated reflection subgroups.

A Coxeter group $W_S$ is said to be \emph{irreducible} if $\mathcal{G}_{W_S}$ is connected. Otherwise, the connected components of the Coxeter diagram $\mathcal{G}_{W_S}$ are Coxeter diagrams of the form $(\mathcal{G}_{W_{S_i}})_i$, where the $(S_i)_i$ form a partition of $S$. The subgroups $(W_{S_i})_i$ are called the \emph{components} of $W_S$. A Coxeter group $W_S$ is \emph{spherical} (resp. \emph{affine}) if each component of $W_S$ is finite (resp. infinite and virtually abelian). Note that every irreducible Coxeter group $W_S$ is spherical, affine, or \emph{large}, i.e., has a finite index subgroup with a non-abelian free quotient \cite[Cor.\ 2]{Margulis2000}.

\subsection{Cartan matrices}

A \emph{Cartan matrix} on a set $S$ is a matrix $\Cart=(\Cart_{st})_{s,t \in S}$ satisfying the conditions

\begin{itemize}
	\item $\forall s \in S, \quad \Cart_{ss} = 2$;
	\item $\forall s \neq t \in S, \quad \Cart_{st} \leqslant 0$;
	\item $\forall s,t \in S, \quad \Cart_{st} = 0 \quad \Leftrightarrow \quad \Cart_{ts} = 0$.
\end{itemize}

A Cartan matrix $\Cart$ is \emph{decomposable} if there exists a nontrivial partition of $S$ such that~$\Cart$ written with respect to this partition is block-diagonal. Otherwise, $\Cart$ is \emph{indecomposable}. Two Cartan matrices are \emph{equivalent} if they are conjugate via a diagonal matrix all of whose diagonal entries are positive (this is easily seen to be an equivalence relation). A Cartan matrix is \emph{symmetrizable} if it is equivalent to a symmetric matrix.

A Cartan matrix $\Cart_S$ and a Coxeter group $W_S$ are \emph{compatible} if
\begin{enumerate}
	\item $\forall s,t \in S,\,  m_{st} = 2 \quad\,\,\,\,\,\, \Leftrightarrow \quad \Cart_{st} = 0$;
	\item $\forall s,t \in S$, $m_{st} < \infty \quad \Rightarrow \quad \Cart_{st} \Cart_{ts} = 4 \cos^2 ( \nicefrac{\pi}{m_{st}} )$;
	\item $\forall s,t \in S$, $m_{st} = \infty \quad \Rightarrow \quad \Cart_{st} \Cart_{ts} \geqslant 4$.
\end{enumerate}

\begin{remark}\label{rem:tits_matrix}
	For an arbitrary Coxeter group $W_S$, the matrix given by $\Cart_{st} = - 2\cos(\nicefrac{\pi}{m_{st}})$ is compatible with $W_S$. This matrix is called the \emph{Tits matrix} of $W_S$.
\end{remark}

If $\Cart$ is an indecomposable Cartan matrix then the matrix $2\mathrm{Id} - \Cart$ is an irreducible Perron--Frobenius matrix\footnote{An {\em irreducible Perron--Frobenius matrix} is a nonnegative matrix $A = (A_{st})_{s,t \in S}$ such that for every $s,t \in S$, there exists $k > 0$ such that the $(s,t)$ entry of $A^k$ is positive.}, since the directed graph associated to  $2\mathrm{Id} - \Cart$ is clearly symmetric (by the definition of a Cartan matrix) and connected (since $\Cart$ is indecomposable). Hence, the spectral radius of $2\mathrm{Id} - \Cart$ is a simple eigenvalue $\rho>0$ of $2\mathrm{Id} - \Cart$, by Perron--Frobenius's theorem. It follows that $2-\rho\in (-\infty,2)$ is the eigenvalue with the smallest real part of $\Cart$. If $2-\rho > 0$ (resp., $=0$, $<0$) then $\Cart$ is said to be of \emph{positive type} (resp., \emph{zero type}, \emph{negative type}).

\begin{proposition}\emph{\cite[Lem.~13, Prop~21-23]{the_bible}\label{prop:types_correlation}}
	Let $W_S$ be an irreducible Coxeter group and~$\Cart$ a compatible Cartan matrix. Then:
	\begin{itemize}
		\item $W_S$ is spherical if and only if $\Cart$ is of positive type.
		
		\item If $\Cart$ is of type zero, then $W_S$ is affine.
		
		\item If $\Cart$ is of negative type and $W_S$ is affine, then $W_S$ is of type $\widetilde{A}_{|S|-1}$.
		
		\item If $W_S$ is large, then $\Cart$ is of negative type.
	\end{itemize}
\end{proposition}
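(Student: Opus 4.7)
The plan is to control the eigenvalues of any compatible Cartan matrix $\Cart$ by comparison with the Tits matrix $\Cart^{\Tits}$ of $W_S$ (Remark~\ref{rem:tits_matrix}), and then to invoke the classical fact that the Tits form of $W_S$ is positive definite (respectively positive semidefinite with one-dimensional kernel) if and only if $W_S$ is spherical (respectively affine). Equivalently, $\Cart^{\Tits}$ itself is of positive (resp.\ zero) type iff $W_S$ is spherical (resp.\ affine).

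The technical heart of the proof will be the Perron--Frobenius inequality
$$
\rho(2\Id - \Cart) \;\geq\; \rho(2\Id - \Cart^{\mathrm{sym}}) \;\geq\; \rho(2\Id - \Cart^{\Tits}),
$$
where $\Cart^{\mathrm{sym}}$ denotes the symmetric matrix with diagonal $2$ and off-diagonal entries $-\sqrt{\Cart_{st}\Cart_{ts}}$. The second inequality is Perron--Frobenius monotonicity applied to the entrywise bound $\sqrt{\Cart_{st}\Cart_{ts}} \geq 2\cos(\pi/m_{st})$, which is an equality whenever $m_{st}<\infty$. For the first inequality I would let $u, v >0$ be left and right Perron eigenvectors of $B := 2\Id - \Cart$ for the eigenvalue $\rho := \rho(B)$, set $w_s := \sqrt{u_s v_s}$, and apply Cauchy--Schwarz:
$$
((2\Id - \Cart^{\mathrm{sym}})w)_s \;=\; \sum_{t} \sqrt{(B_{st} v_t)(B_{ts} u_t)} \;\leq\; \sqrt{(Bv)_s \, (B^{\top}u)_s} \;=\; \rho\, w_s,
$$
so $\rho(2\Id - \Cart^{\mathrm{sym}}) \leq \rho$ by Collatz--Wielandt. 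Moreover, when $\Cart$ is symmetrizable one has $D\Cart D^{-1} = \Cart^{\mathrm{sym}}$ for a suitable positive diagonal $D$, so the two have the same spectrum; in terms of smallest real eigenvalues, the chain above reads $\lambda_{\min}(\Cart) \leq \lambda_{\min}(\Cart^{\mathrm{sym}}) \leq \lambda_{\min}(\Cart^{\Tits})$.

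With this inequality in hand, the four bullets will follow quickly. Bullet (1)~$(\Leftarrow)$ is immediate: $\Cart$ of positive type forces $\Cart^{\Tits}$ of positive type, whence $W_S$ is spherical. Bullet (1)~$(\Rightarrow)$: a spherical irreducible $W_S$ has a tree Coxeter diagram with all $m_{st}$ finite (classification of finite irreducible Coxeter groups), so $\Cart$ is automatically symmetrizable and $\Cart^{\mathrm{sym}} = \Cart^{\Tits}$; the chain collapses to equalities and positivity transfers back to $\Cart$. Bullet (2): if $\Cart$ is of zero type then $\lambda_{\min}(\Cart^{\Tits}) \geq 0$, and the positive case is excluded because it would, by~(1), force $\Cart$ itself to be of positive type; hence $\Cart^{\Tits}$ is of zero type and $W_S$ is affine. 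Bullet (3): if $W_S$ were affine with tree diagram, the same collapse argument would make $\Cart$ of zero type, contradicting the hypothesis; so the diagram has a cycle, and the classification of affine irreducible Coxeter diagrams leaves only $W_S = \widetilde{A}_{|S|-1}$. Bullet (4) is then immediate from the trichotomy spherical/affine/large combined with (1) and~(2).

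The main obstacle will be justifying the Perron--Frobenius inequality of the second paragraph, especially the Cauchy--Schwarz comparison between $\Cart$ and $\Cart^{\mathrm{sym}}$; everything else amounts to classical Coxeter-theoretic bookkeeping (finite Coxeter diagrams are trees, and the sole affine irreducible Coxeter diagram with a cycle is $\widetilde{A}_n$).
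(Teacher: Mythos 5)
The paper does not actually give a proof of this proposition; it simply cites Vinberg \cite[Lem.~13, Prop.~21--23]{the_bible}, so there is no in-paper argument to compare yours against. Judged on its own terms, your argument is sound in its essentials, and the two Perron--Frobenius comparisons at its heart are correct: the Collatz--Wielandt / subinvariance argument using $w_s = \sqrt{u_s v_s}$ and Cauchy--Schwarz does establish $\rho(2\Id-\Cart^{\mathrm{sym}}) \leq \rho(2\Id-\Cart)$, entrywise monotonicity gives $\rho(2\Id-\Cart^{\mathrm{sym}}) \geq \rho(2\Id-\Cart^{\Tits})$, and both $2\Id-\Cart^{\mathrm{sym}}$ and $2\Id-\Cart^{\Tits}$ are irreducible because the support of the off-diagonal entries is the same as that of $\Cart$. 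Reducing to the classical Bourbaki dichotomy for the Tits form is the right move, and your deductions of bullets (1), (2), and (4) go through. This is also essentially in the spirit of Vinberg's own treatment, which likewise runs on Perron--Frobenius comparison with the cosine matrix.

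There is one small gap, in bullet (3). Your ``collapse argument'' needs two hypotheses simultaneously: the Coxeter diagram must be a tree (so that all cyclic products of length $\geq 3$ vanish and $\Cart$ is symmetrizable, by Proposition~\ref{prop:cyclic_products_equivalence}), \emph{and} all labels $m_{st}$ must be finite (so that $\sqrt{\Cart_{st}\Cart_{ts}} = 2\cos(\pi/m_{st})$, i.e.\ $\Cart^{\mathrm{sym}} = \Cart^{\Tits}$). In bullet (3) you invoke it under the weaker hypothesis ``affine with tree diagram,'' but $\widetilde{A}_1$ is affine, has a tree diagram, and has $m_{12}=\infty$. For $\widetilde{A}_1$ a compatible $\Cart$ satisfies $\Cart_{12}\Cart_{21}\geq 4$ with equality not required, so $\Cart^{\mathrm{sym}}$ can strictly dominate $\Cart^{\Tits}$ and $\Cart$ can genuinely be of negative type; no contradiction arises, and your step ``so the diagram has a cycle'' is false there. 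The conclusion of bullet (3) is nonetheless safe because $\widetilde{A}_1 = \widetilde{A}_{|S|-1}$ when $|S|=2$, but you should run the collapse argument against the hypothesis ``affine and not of type $\widetilde{A}_n$ for any $n\geq 1$,'' for which the diagram is a tree with all labels finite; this excludes precisely the remaining affine types $\widetilde{B}_n,\widetilde{C}_n,\widetilde{D}_n,\widetilde{E}_{6,7,8},\widetilde{F}_4,\widetilde{G}_2$ and leaves $\widetilde{A}_{|S|-1}$ as claimed.
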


\subsection{Reflection groups}

A \emph{reflection} $\sigma$ of $V$ is an endomorphism of $V$ of order 2 which fixes a hyperplane of $V$ pointwise. Hence there exists a vector $v \in V$ and a linear form $\alpha \in V^{*}$ with $\alpha(v)=2$ such that $ \sigma = \mathrm{\mathrm{Id}} -\alpha\otimes v,$ i.e., such that $\sigma(x) = x - \alpha(x)v$ for all $x \in V$.
Given a reflection~$\sigma$, the pair $(\alpha,v) \in V^\ast \times V$ is not unique. Any other such pair is of the form $(\lambda \alpha, \lambda^{-1} v)$ with $\lambda \neq 0$.

\begin{definition}\label{def:refl_gp}
	Let $W_S$ be a Coxeter group.	A representation $\rho:W_S \to \mathrm{GL} (V)$ is \emph{a representation of $W_S$ as a reflection group} if for every $s\in S$ there exist $v_s\in V$ and $\alpha_s\in V^*$ such that:
	\begin{enumerate}
		\item\label{item:rho(s)} for every $s \in S$, $\rho(s) = \Id - \alpha_s \otimes v_s$;
		
		\item $\Cart =(\alpha_s(v_t))_{s,t \in S}$ is a Cartan matrix;
		
		\item the Cartan matrix $\Cart$ and the Coxeter group $W_S$ are compatible;
		
	 	\item the convex cone $\widetilde{\Delta} := \bigcap_{s \in S} \{ \alpha_s \leqslant 0 \}$ has nonempty interior.
	\end{enumerate}
\end{definition}

	The family $(\alpha_s,v_s)_{s \in S}$ in the previous definition is not unique. However, if $W_S$ is irreducible, the equivalence class of $\Cart$ depends only on $\rho$, and we refer to this equivalence class as the {\em Cartan matrix of $\rho$}.

\begin{remark}\label{rem:restriction_to_standard_subgroup}
	By definition, if $\rho: W_S \to \GL (V)$ is a representation of $W_S$ as a reflection group with Cartan matrix $\Cart$ and $T \subset S$ then the restriction $\rho\bigr|_{W_T} : W_T \to \GL (V)$ is a representation of $W_T$ as a reflection group with Cartan matrix $\Cart_T := (\Cart_{st})_{s,t \in T}$.
\end{remark}

\begin{example}\label{eg:representation_from_cartan}
Let $W_S$ be a Coxeter group. To any Cartan matrix $\Cart$ compatible with $W_S$, one can associate a representation $\rho_\Cart : W_S \rightarrow \mathrm{GL}_{|S|}(\mathbb{R})$ sending each $s \in S$ to the reflection
\[
v \mapsto v - (v^T\Cart e_s)e_s \qquad
\]
of $\mathbb{R}^{|S|}$. If $W_S$ is irreducible and large, then $\rho_\Cart$ is a representation of $W_S$ as a reflection group with Cartan matrix $\Cart$; see \cite[Rem.~3.14(i)]{danciger2023convex}. If $\Cart$ is the Tits matrix of $W_S$ (see Remark \ref{rem:tits_matrix}), then $\rho_\Cart$ is called the {\em geometric representation} of $W_S$.
\end{example}

\begin{theorem}[Tits, Vinberg]\label{thm:tits-vinberg1}
Let $W_S$ be a Coxeter group and $\rho:W_S\to \mathrm{GL} (V)$ a representation of $W_S$ as a reflection group. Then:
\begin{enumerate}
		\item\label{item:faithful_discrete} the representation $\rho$ is faithful and $\rho(W_S)$ is discrete in $\mathrm{GL} (V)$;
		
		\item\label{item:convex} the union of the convex cones $\rho(\gamma) (\widetilde{\Delta})$ as $\gamma$ varies within $W_S$ is a convex cone $\Cc_{\mathrm{TV}}$ of $V$;
		
		\item\label{item:action_proper} the action of $\rho (W_S)$ on the interior $\Omega_{\mathrm{TV}}$ of $\Pb(\Cc_{\mathrm{TV}})$ is proper. The domain $\Omega_{\mathrm{TV}}$ is called the \emph{Tits--Vinberg domain} of the reflection group $\rho (W_S)$.
\end{enumerate}

\end{theorem}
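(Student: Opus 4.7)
The plan is to reproduce Vinberg's classical tiling argument, in which the three assertions are proved in parallel rather than sequentially. The strategy is to set up a ``tiling'' of a convex cone by $\rho(W_S)$-translates of $\widetilde{\Delta}$ and read off faithfulness, discreteness, and properness from this combinatorial picture.

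First I would verify that $\rho$ is a well-defined homomorphism. It is immediate from the definition that each $\rho(s)$ is an involution, so one only needs to check the braid relations: for $s\neq t$ with $m_{st}<\infty$, that $\rho(s)\rho(t)$ has order exactly $m_{st}$. Since $\rho(s),\rho(t)$ preserve the plane $\Span(v_s,v_t)$ and act trivially on a complement, this is a $2\times 2$ computation using $\Cart_{ss}=\Cart_{tt}=2$ and $\Cart_{st}\Cart_{ts}=4\cos^2(\pi/m_{st})$; in the positive-type case one gets a rotation of the correct order, and in the zero/negative type cases one gets an element of infinite order (which handles $m_{st}=\infty$).

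For $w\in W_S$, write $C_w:=\rho(w)\widetilde{\Delta}$ and call $C_w, C_{ws}$ adjacent across the wall $\rho(w)\{\alpha_s=0\}$. Proceeding by induction on the word length $\ell(w)$, I would establish simultaneously:
\begin{enumerate}
\item the interiors of distinct chambers $C_w$ are pairwise disjoint;
\item the union $\Cc_{\mathrm{TV}}:=\bigcup_{w\in W_S} C_w$ is convex in $V$.
\end{enumerate}
The crucial local step is that for adjacent chambers, $C_w\cup C_{ws}=\rho(w)(\widetilde{\Delta}\cup\rho(s)\widetilde{\Delta})$ is convex; this reduces to a rank-2 assertion about $\{\alpha_s\leq 0\}\cup \rho(s)\{\alpha_s\leq 0\}$ which, combined with the rank-2 case from the previous paragraph, forces adjacent chambers to lie on opposite sides of the shared wall. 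Coupling this with Matsumoto's exchange condition in $W_S$, a minimal gallery from $C_e$ to $C_w$ crosses each wall at most once, yielding (1), hence faithfulness; the convexity (2) then follows by straightening an arbitrary segment in $\Cc_{\mathrm{TV}}$ into a concatenation of segments lying in unions of adjacent chambers. Discreteness of $\rho(W_S)$ is immediate once (1) is known, since a point in the interior of $\widetilde{\Delta}$ has trivial $\rho(W_S)$-stabilizer and its $\GL(V)$-stabilizer is compact.

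Finally, for (\ref{item:action_proper}), the projectivized tiles $[C_w]$ cover $\Omega_{\mathrm{TV}}$ with pairwise disjoint nonempty interiors; since each compact $K\subset\Omega_{\mathrm{TV}}$ meets only finitely many $[C_w]$ (for instance because $\rho(w)\widetilde{\Delta}$ accumulate only on $\partial\Cc_{\mathrm{TV}}$), properness follows by standard arguments. The main obstacle in the whole scheme is the bookkeeping underlying the local convexity step: one must carefully track which wall a minimal gallery crosses at each stage and align this with the exchange condition in $W_S$, so that all inductive hypotheses remain available at the next step.
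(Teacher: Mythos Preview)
The paper does not give its own proof of this theorem; it simply cites Bourbaki (for the geometric representation), Vinberg's original paper, and a survey of Marquis for a self-contained treatment. Your sketch is precisely an outline of the classical Vinberg tiling argument contained in those references, so you are not diverging from the paper's approach --- you are filling in what the paper deliberately leaves as a citation.

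The outline is broadly sound, but two points would need more care in an actual write-up. First, in the paper's setup (Definition~\ref{def:refl_gp}) $\rho$ is already assumed to be a homomorphism, so your opening verification of the braid relations is strictly speaking a preliminary to the theorem rather than part of it, though of course it is exactly what one must check when starting from the raw data $(\alpha_s,v_s)_{s\in S}$. Second, your justification of properness (``each compact $K\subset\Omega_{\mathrm{TV}}$ meets only finitely many $[C_w]$ because the $\rho(w)\widetilde{\Delta}$ accumulate only on $\partial\Cc_{\mathrm{TV}}$'') is close to circular: local finiteness of the tiling is precisely the content to be established, and in Vinberg's argument it comes out of the same induction on gallery length that gives disjointness, not from an a posteriori accumulation statement. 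Similarly, the convexity step via ``straightening a segment into pieces lying in unions of adjacent chambers'' is more delicate than your one sentence indicates; the standard route is to show that the interior of $\Cc_{\mathrm{TV}}$ is open and locally convex across each wall and then use connectedness. None of this is a gap in strategy, only in the level of detail you would need to supply.
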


\begin{proof}
This comes from Bourbaki \cite[Chap.~V.~\S4.4-6]{Bourbaki_LiegroupsandLiealgebras46} in a special case, and Vinberg \cite[Thm.~2]{the_bible} in the general case. See \cite[Lec.~1]{fivelectures} for a self-contained short proof. 
\end{proof}

Given a representation $\rho: W_S \rightarrow \mathrm{GL}(V)$ of $W_S$ as a reflection group, define $$V_v=\mathrm{Span}(v_s\mid s\in S)\ \textrm{and}\ V_{\alpha}=\bigcap\limits_{s\in S}\ker(\alpha_s).$$ These two subspaces of $V$ are well defined and preserved by $\rho(W_S)$. 

We denote by $V_{\Cb}$ the complexification of the real vector space $V$. A representation $V$ of a group is \emph{absolutely irreducible} if $V_\mathbb{C}$ does not admit any nontrivial proper invariant $\mathbb{C}$-subspaces. 

In the following theorem, we gather several well-known results about representations of Coxeter groups as reflection groups.

\begin{theorem}[Folklore]\label{thm:tits-vinberg2}	
	Let $W_S$ be an irreducible Coxeter group and $\rho:W_S\to \mathrm{GL} (V)$ a representation of $W_S$ as a reflection group. Let $\Cart$ be the Cartan matrix of $\rho$. Then: 
	\begin{enumerate}
		
		\item\label{item:irreducible_rep} The representation $\rho$ is irreducible if and only if $V_v=V$ and $V_{\alpha}=\{0\}$. In this case, $\dim (V) = \mathrm{rank} (\Cart)$.
		
		\item\label{item:bilinear_form} Assume $\rho$ is irreducible. Then $\rho$ preserves a nonzero (hence nondegenerate) symmetric bilinear form on $V$ if and only if $\Cart$ is symmetrizable.
		
		\item\label{item:invariant_pcc} If $W_S$ is large and $V_\alpha = \{ 0\}$, then $\Omega_{\mathrm{TV}}$ is a properly convex open subset of $\Pb (V)$.
		
		\item\label{item:strongly_irreducible} If $W_S$ is large and $\rho$ is irreducible, then the restriction of $\rho$ to any finite-index subgroup of $W_S$ is absolutely irreducible.
	\end{enumerate} 
\end{theorem}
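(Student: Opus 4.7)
The plan is to handle the four items in order, with later items leveraging earlier ones.

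For (\ref{item:irreducible_rep}), first observe that $V_v$ is $\rho(W_S)$-invariant because $\rho(s)(v_t) = v_t - \Cart_{st} v_s$, while $V_\alpha = \bigcap_s \ker \alpha_s$ is precisely the subspace fixed pointwise by every $\rho(s)$. If $\rho$ is irreducible, then $V_\alpha \neq V$ (else every $\alpha_s = 0$) forces $V_\alpha = \{0\}$, and $V_v \neq \{0\}$ (else every $\rho(s)$ is the identity) forces $V_v = V$. For the converse, given a $\rho(W_S)$-invariant subspace $U$, the identity $\rho(s)(u) = u - \alpha_s(u) v_s$ forces, for each $s$, either $v_s \in U$ or $U \subset \ker \alpha_s$. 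The resulting sets $S_1, S_2 \subset S$ are disjoint (since $\alpha_s(v_s) = 2$) and cover $S$; if both are nonempty, then $s \in S_1$ and $t \in S_2$ yield $\Cart_{ts} = 0$, hence $\Cart_{st} = 0$ and $m_{st} = 2$ by compatibility, so the Coxeter diagram disconnects, contradicting irreducibility of $W_S$. Thus $U = V_v = V$ or $U \subset V_\alpha = \{0\}$. The rank assertion follows by factoring $\Cart^T$ as $P \circ Q$ where $P : V \to \mathbb{R}^S$, $x \mapsto (\alpha_s(x))_s$, is injective and $Q : \mathbb{R}^S \to V$, $e_s \mapsto v_s$, is surjective.

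For (\ref{item:bilinear_form}), assume $\rho$ is irreducible. Any nonzero invariant symmetric $f$ has invariant radical, hence trivial by irreducibility, so $f$ is nondegenerate. Invariance under each $\rho(s)$ combined with $V_v = V$ forces $f(v_s, \cdot) = c_s \alpha_s$ for some nonzero $c_s$ (the vanishing of $f(v_s, \cdot)$ on $\ker \alpha_s$ is direct). Symmetry of $f$ then yields $c_s \Cart_{st} = c_t \Cart_{ts}$; since $\Cart_{st}$ and $\Cart_{ts}$ share a common sign, so do the $c_s$ attached to adjacent vertices, and connectedness of the Coxeter diagram makes all $c_s$ share a sign, exhibiting (after rescaling) a symmetrization of $\Cart$. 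Conversely, given positive $D = \mathrm{diag}(d_s)$ with $D\Cart$ symmetric, define $f(v_s, v_t) := d_s \Cart_{st}$ and extend bilinearly; well-definedness uses that any relation $\sum_s \lambda_s v_s = 0$ satisfies $\sum_s \lambda_s \Cart_{ts} = 0$ for each $t$, which via $d_s \Cart_{st} = d_t \Cart_{ts}$ yields the required consistency. Invariance of $f$ then reduces to checking $f(v_s, y) = d_s \alpha_s(y)$ on generators and extending by linearity using $V_v = V$.

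Item (\ref{item:invariant_pcc}) is a standard consequence of Vinberg's theory \cite{the_bible}. Granted $V_\alpha = \{0\}$, one shows that $\overline{\Cc_{\mathrm{TV}}}$ contains no line through the origin: largeness of $W_S$ forces $\Cart$ to be of negative type by Proposition \ref{prop:types_correlation}, and combining this with Vinberg's characterization of the faces of $\widetilde{\Delta}$ via spherical standard subgroups rules out any $\rho(W_S)$-invariant line in $\overline{\Cc_{\mathrm{TV}}}$, giving proper convexity of $\Omega_{\mathrm{TV}}$.

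Finally, for (\ref{item:strongly_irreducible}): by (\ref{item:invariant_pcc}), $\rho(W_S)$ is an irreducible subgroup of $\mathrm{GL}(V)$ preserving a properly convex open domain, and since $W_S$ is large (hence not virtually abelian) a theorem of Benoist supplies a proximal element $g \in \rho(W_S)$. Given a finite-index $\Gamma \leq W_S$, let $\Gamma_0 \trianglelefteq W_S$ be its normal core; a power of $g$ lies in $\rho(\Gamma_0)$ and remains proximal. Clifford's theorem applied to the $W_S$-irreducible $V$ makes $V|_{\Gamma_0}$ isotypic, say $V \cong V_0^{\oplus k}$ as $\Gamma_0$-representations; the proximality of this power of $g$ (whose top real eigenvalue is simple, but appears with multiplicity $k$ in $V_0^{\oplus k}$) forces $k = 1$, so $\rho|_{\Gamma_0}$ and hence $\rho|_\Gamma$ is irreducible. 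A real-irreducible representation admitting a proximal element is automatically absolutely irreducible: a $\rho(\Gamma)$-invariant complex or quaternionic structure would pair the top eigenvector of the proximal element with a linearly independent eigenvector of the same eigenvalue, contradicting simplicity. I expect (\ref{item:invariant_pcc}) to be the main obstacle, since the convex-geometric step ruling out invariant lines in $\overline{\Cc_{\mathrm{TV}}}$ must be carefully coordinated with the negative-type hypothesis supplied by largeness.
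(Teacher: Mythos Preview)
Your arguments for items (\ref{item:irreducible_rep})--(\ref{item:invariant_pcc}) are correct and considerably more detailed than the paper's, which simply cites Vinberg \cite{the_bible} and related sources.

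For item (\ref{item:strongly_irreducible}), however, there is a genuine gap: Clifford's theorem does \emph{not} say that $V|_{\Gamma_0}$ is isotypic. It only gives that $V|_{\Gamma_0}$ is semisimple with $W_S$ permuting the isotypic components $W_1,\dots,W_m$ transitively. Your proximality argument handles $m=1$, but for $m>1$ a proximal $g\in\rho(\Gamma_0)$ acts block-diagonally with \emph{different} blocks on the $W_j$, and nothing prevents its top eigenvalue from being simple. Concretely, the infinite dihedral group $\bigl\langle\mathrm{diag}(2,\tfrac12),\,\bigl(\begin{smallmatrix}0&1\\1&0\end{smallmatrix}\bigr)\bigr\rangle$ acts irreducibly on $\mathbb{R}^2$, preserves a properly convex arc in $\mathbb{P}^1$, and contains proximal elements, yet its index-$2$ diagonal subgroup is reducible; so irreducibility, proper convexity, and proximality alone do not suffice. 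The hypothesis that $W_S$ is \emph{large} must enter beyond merely producing a proximal element. One fix for the case $m>1$: since each $\rho(s)-\mathrm{Id}$ has rank $1$, any $\rho(s)$ permuting the $W_j$ nontrivially (and some must, else $\rho$ is reducible) forces $\dim W_j=1$, making $\rho(\Gamma_0)$ simultaneously diagonalizable and hence abelian; this contradicts faithfulness of $\rho$ together with the fact that a finite-index subgroup of a large Coxeter group contains a nonabelian free group. Your passage from real to absolute irreducibility via proximality is fine once this is repaired.
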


%
%

\begin{proof}
	Item~\eqref{item:irreducible_rep} comes from Vinberg \cite[Prop.~18-19]{the_bible} or \cite[Chap.~V.~\S4.7]{Bourbaki_LiegroupsandLiealgebras46} in a special case. See also \cite[Proposition 3.23]{danciger2023convex}.
	Item~\eqref{item:bilinear_form} comes from \cite[Thm.~6]{the_bible}. 
	For Item~\eqref{item:invariant_pcc}, Proposition~\ref{prop:types_correlation} shows that $\Cart$ is of negative type and \cite[Lem.~15]{the_bible} shows that~$\O_{\mathrm{TV}}$ is properly convex. Item~\eqref{item:strongly_irreducible} comes from \cite[Proposition 3.23]{danciger2023convex}, see also \cite[Lem.~1]{de_la_harpe}. In fact, the proof is written for real vector spaces but holds for complex vector spaces.
\end{proof}

\subsection{Reducing to an irreducible representation}
\label{subsection:Invariantsubspace}
A representation $\rho: W_S \to \GL (V)$ of~$W_S$ as a reflection group gives rise to a representation $\rho_v^\alpha$ of $W_S$ on $V_v/(V_\alpha\cap V_v)$. If $W_S$ is irreducible and large, then the representation $\rho_v^\alpha$ is an irreducible representation of $W_S$ as a reflection group, shares the same Cartan matrix $\Cart$ as $\rho$, and has dimension $\mathrm{rank}(\Cart)$; see \cite[Sec.~3.9]{danciger2023convex}.

For $\lambda\in\Cb$, define $$D(\lambda):=\textrm{diag}(\lambda,1,\dots,1,\lambda^{-1}).$$

\begin{lemma}\label{lem:restrict_to_irreducible_rep}
Let $W_S$ be an irreducible and large Coxeter group and $\rho: W_S \to \GL (V)$ a representation of $W_S$ as a reflection group. For any $\lambda > 1$, for any $\sigma,\tau$ reflections of $W_S$, $\rho (\sigma \tau)$ is conjugate to 
the matrix $D(\lambda)$ if and only if $\rho_v^\alpha (\sigma \tau)$ is conjugate to the matrix $D(\lambda)$.
\end{lemma}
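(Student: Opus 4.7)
The plan is to reduce both conjugacies in the statement to the same explicit conditions on the reflection data, making the equivalence manifest. The first step is to fix, for any reflection $\sigma$ of $W_S$, a vector $v_\sigma \in V_v$ and a linear form $\alpha_\sigma$ vanishing on $V_\alpha$ such that $\rho(\sigma) = \mathrm{Id} - \alpha_\sigma \otimes v_\sigma$; this uses that every reflection of $W_S$ is $W_S$-conjugate to some generator $s \in S$ together with the $\rho(W_S)$-invariance of both $V_v$ and $V_\alpha$. A direct computation then shows that $T := \rho(\sigma\tau) - \mathrm{Id}$ has image contained in $\mathrm{span}(v_\sigma, v_\tau) \subseteq V_v$ and kernel containing $V_\alpha$, and that in the basis $(v_\sigma, v_\tau)$ of the $\rho(\sigma\tau)$-invariant plane $\mathrm{span}(v_\sigma, v_\tau)$ (when $v_\sigma, v_\tau$ are linearly independent), $\rho(\sigma\tau)$ restricts to a $2 \times 2$ matrix of determinant $1$ and trace $a - 2$, where $a := \alpha_\sigma(v_\tau)\alpha_\tau(v_\sigma)$.

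The second step is to translate the conjugacy $\rho(\sigma\tau) \sim D(\lambda)$ into conditions on this data. Since $D(\lambda)$ for $\lambda > 1$ is diagonalizable with eigenvalues $\lambda, \lambda^{-1}$ and $1$ of multiplicity $\dim - 2$, and since $\mathrm{rank}(T) \leq 2$ with image in $\mathrm{span}(v_\sigma, v_\tau)$, the conjugacy is equivalent to the conjunction of (i) $v_\sigma, v_\tau$ being linearly independent, (ii) $\alpha_\sigma, \alpha_\tau$ being linearly independent (equivalently, $\mathrm{rank}(T) = 2$, so the $1$-eigenspace has dimension $\dim V - 2$), and (iii) $a - 2 = \lambda + \lambda^{-1}$, which forces $a > 4$. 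Applied to the induced representation $\rho_v^\alpha$ on $V_v/(V_v \cap V_\alpha)$, with $\bar\alpha_\sigma, \bar\alpha_\tau$ the descents to the quotient of the restrictions $\alpha_\sigma|_{V_v}, \alpha_\tau|_{V_v}$, the parallel analysis yields corresponding conditions (i$'$), (ii$'$), (iii$'$); the crucial observation is that $\bar\alpha_\sigma(\bar v_\tau)\,\bar\alpha_\tau(\bar v_\sigma) = a$, so (iii$'$) is literally (iii).

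The final step---and the only subtle one---is to show that, under (iii), the pairs (i)/(i$'$) and (ii)/(ii$'$) are equivalent. The implications (i$'$) $\Rightarrow$ (i) and (ii$'$) $\Rightarrow$ (ii) are immediate, since dependence of the images, respectively of the restrictions, forces dependence of the originals. For (i) $\Rightarrow$ (i$'$), a hypothetical failure would give $c_1 v_\sigma + c_2 v_\tau \in V_v \cap V_\alpha$ with $(c_1, c_2) \neq (0,0)$; applying $\alpha_\sigma$ and $\alpha_\tau$, which both vanish on $V_\alpha$, yields a $2 \times 2$ linear system in $(c_1, c_2)$ whose matrix has determinant $4 - a$, nonzero by (iii). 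The same determinant appears when testing $c_1 \alpha_\sigma + c_2 \alpha_\tau$ at $v_\sigma$ and $v_\tau$, giving (ii) $\Rightarrow$ (ii$'$). The essential arithmetic input---and the reason the hypothesis $\lambda > 1$ cannot be weakened to $\lambda \geq 1$---is precisely the nonvanishing $4 - a \neq 0$; this is where I expect the reader (and myself) to pause, since it is easy to overlook that linear independence can be destroyed by descent to the quotient when $a = 4$.
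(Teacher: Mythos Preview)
Your proof is correct and rests on the same key observation as the paper's: the quantity $a=\alpha_\sigma(v_\tau)\alpha_\tau(v_\sigma)$ is unchanged when passing to $V_v/(V_v\cap V_\alpha)$. The paper's argument is two lines because it invokes, as a black box, the criterion from Vinberg that $\rho(\sigma\tau)$ is conjugate to some $D(\lambda)$ with $\lambda>1$ if and only if $a>4$; applied on both $V$ and on the quotient, this immediately yields the lemma. Your proof instead re-derives that criterion from scratch, decomposing it into your conditions (i), (ii), (iii), and then checks separately that (i) and (ii) survive descent via the $4-a\neq 0$ determinant argument. This is more elementary and self-contained, at the cost of length; in fact your determinant computation is doing a bit more work than needed, since (as you essentially observe) the single condition $a>4$ already forces (i), (ii), (i$'$), (ii$'$) directly, so one could shorten your argument to match the paper's once the $2\times 2$ analysis on $\mathrm{span}(v_\sigma,v_\tau)$ is in hand.
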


\begin{proof}
Let $\sigma,\tau$ be two reflections of $W_S$, $\alpha,\beta$ two linear forms on $V$ and $v,w$ vectors of $V$ such that $\alpha (v) = \beta (w) = 2$, $\rho(\sigma) = \Id - \alpha \otimes v$ and $\rho(\tau) = \Id - \beta \otimes w$. The product $\rho( \sigma \tau)$ is conjugate to a matrix $D(\lambda)$, with $\lambda > 1$ if and only if $\alpha (w) \beta (v) > 4$ (\cite[Proof of Prop.~6]{the_bible} or \cite[Lem.~1.2]{fivelectures}).
Denote by $\overline{\alpha},\overline{\beta}$ the linear forms induced by $\alpha,\beta$ on $V_v/(V_\alpha \cap V_v)$, and by $\overline{v},\overline{w}$ the projections of the vectors $v,w$ onto $V_v/(V_\alpha \cap V_v)$, respectively. The statement now follows from the fact that $\alpha (w) = \overline{\alpha} (\overline{w})$ and $\beta (v) = \overline{\beta} (\overline{v})$.
\end{proof}

\section{Large reflection groups always contain a matrix $D (\lambda)$}\label{sec:reflection-groups-contain-dlambda}

 The goal of this section is to show the following.

\begin{proposition}
	\label{prop_dlambda}
	Let $W$ be an irreducible large Coxeter group and $\rho:W\to\GL(V)$ a representation of $W$ as a reflection group. Then $\Gamma : = \rho(W)$ contains an element which is conjugate within $\GL(V)$ to $D({\lambda})$ for some real number $\lambda > 1$.
\end{proposition}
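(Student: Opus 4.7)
The plan is to reformulate the statement as finding two reflections $\sigma = \Id - \alpha \otimes v$ and $\tau = \Id - \beta \otimes w$ in $\rho(W)$ satisfying $\alpha(w)\beta(v) > 4$. Indeed, as recalled in the proof of Lemma~\ref{lem:restrict_to_irreducible_rep} (drawing on \cite[Proof of Prop.~6]{the_bible}), this condition is precisely equivalent to $\sigma\tau$ being conjugate within $\mathrm{GL}(V)$ to $D(\lambda)$ for some $\lambda > 1$. Moreover, Lemma~\ref{lem:restrict_to_irreducible_rep} reduces the problem from $\rho$ to the irreducible representation $\rho_v^\alpha$ of Section~\ref{subsection:Invariantsubspace}, so I may assume throughout that $\rho$ itself is irreducible.

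In this irreducible setting, Theorem~\ref{thm:tits-vinberg1}(\ref{item:faithful_discrete}) tells us that $\rho(W)$ is faithful and discrete in $\mathrm{GL}(V)$; it is infinite because $W$ is not virtually abelian; strongly irreducible by Theorem~\ref{thm:tits-vinberg2}(\ref{item:strongly_irreducible}); and it preserves the properly convex open domain $\Omega_{\mathrm{TV}}$ by Theorem~\ref{thm:tits-vinberg2}(\ref{item:invariant_pcc}). Since a discrete subgroup of a compact Lie group is finite, the Zariski-closure of $\rho(W)$ in $\mathrm{GL}(V)$ cannot be compact. Combining this with strong irreducibility and the preservation of the properly convex $\Omega_{\mathrm{TV}}$, I would invoke a Goldsheid--Margulis/Benoist-type result to extract a \emph{biproximal} element $\gamma \in \rho(W)$: one whose top and bottom eigenvalues $\lambda^\pm$ are simple real numbers satisfying $|\lambda^+| > 1 > |\lambda^-|$, with corresponding eigenlines $\ell^\pm \subset V$ (and dually $h^\pm \subset V^*$ for the transpose action).

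Given such a $\gamma$, I would then choose a reflection $\sigma = \Id - \alpha \otimes v \in \rho(W)$ in general position with respect to $\gamma$, in the sense that $\alpha(\ell^\pm) \neq 0$ and $v \notin \ker h^\pm$; strong irreducibility together with the infinitude of reflections in $\rho(W)$ ensure such a $\sigma$ exists, since otherwise the set of reflection data $(\alpha_\sigma, v_\sigma)$ would be trapped in a proper union of four Zariski-closed subvarieties of $V^* \times V$, contradicting strong irreducibility. Setting $\sigma_n := \gamma^n \sigma \gamma^{-n}$, a reflection with data $(\alpha \circ \gamma^{-n}, \gamma^n v)$, the relevant product $\alpha(\gamma^{-n} v) \cdot \alpha(\gamma^{n} v)$ has absolute value on the order of $|\lambda^+/\lambda^-|^n$ times the nonzero constant $|\alpha(\ell^+)\, \alpha(\ell^-)\, h^+(v)\, h^-(v)|/\text{const}$. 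After replacing $\gamma$ by $\gamma^2$ if necessary to make the eigenvalues positive and fix the sign, this quantity exceeds $4$ for all $n$ sufficiently large, so $\sigma \sigma_n$ is the desired element conjugate to $D(\lambda)$.

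The main obstacle will be extracting the biproximal element $\gamma$: the hypotheses never directly furnish one from the standard generators. For instance, if $\rho$ is the geometric representation of $W$ (Example~\ref{eg:representation_from_cartan}), then $\Cart_{st}\Cart_{ts} \leq 4$ for every $s,t \in S$, so no product of two standard generating reflections already has the form $D(\lambda)$. Producing the biproximal element therefore requires the dynamical/Lie-theoretic input indicated above; verifying that one may choose the reflection $\sigma$ generically with respect to $\gamma$'s eigen-data, and adjusting signs so that the product actually exceeds $+4$ (rather than being large and negative), are the secondary technical points that need care.
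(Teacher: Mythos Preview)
Your outline takes a genuinely different route from the paper. The paper never extracts a biproximal element from general dynamical principles; instead it first passes to a quasi-Lann\'er standard subgroup (Lemma~\ref{lem:sub_quasi_lanner}), reduces to the irreducible case via Lemma~\ref{lem:restrict_to_irreducible_rep}, uses Lemma~\ref{lem:irr=>dimV=N} to force $\dim V$ to equal the rank, and then splits into two cases. If every affine sub-Cartan matrix is of zero type, the fundamental polytope is $2$-perfect and \cite[Cor.~8.11]{cox_in_hil} furnishes a \emph{round} invariant domain; Lemma~\ref{lem:round_case} then needs only proximality together with roundness (via Lemma~\ref{lem:polar_limit}: in a round domain, if polars of reflections converge to a boundary point then so do their reflecting hyperplanes) to produce two reflections with disjoint hyperplanes. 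Otherwise an $\widetilde{A}_{N-1}$ standard subgroup with negative-type Cartan matrix is present, and \cite[Lem.~3.22]{danciger2023convex} hands over the element directly. So roundness substitutes for biproximality in one case, and an explicit combinatorial subgroup handles the other.

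Your proposal has a genuine gap precisely where you flag it. Benoist's \cite[Prop.~3.1]{auto_convex_benoist} gives a proximal element, and the dual argument gives an element with simple smallest-modulus eigenvalue, but these need not coincide; upgrading to a single biproximal element requires a further proximality-combination argument (\`a la Abels--Margulis--Soifer or Benoist, using strong irreducibility to arrange the needed transversality) which you gesture at but do not supply. You cannot invoke Goldsheid--Margulis here either, since that route presupposes knowledge of the Zariski closure, which is exactly what the paper is computing. The secondary issues are less serious: the general-position claim for $\sigma$ can be made rigorous from strong irreducibility, though your ``four Zariski-closed subvarieties'' phrasing is imprecise (the reflection data form a countable orbit, not a variety, so one argues via orbits rather than dimension); and the sign problem dissolves once you note that if $\alpha(\gamma^n v)\,\alpha(\gamma^{-n}v)$ is large and negative then $\sigma\sigma_n$ has eigenvalues $-\lambda,1,\dots,1,-\lambda^{-1}$, so its square is already conjugate to $D(\lambda^2)$.
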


Theorem \ref{thm:coxeter} will be a consequence of Proposition \ref{prop_dlambda} and Theorem \ref{thm:rep_containing_d_lambda}. The latter will be proved in \S\ref{sec:lie-groups-containing-dlambda}.

\subsection{Large Coxeter groups contain standard quasi-Lannér Coxeter groups}

A large irreducible Coxeter group $W_T$ is \emph{quasi-Lannér} if for every $t \in T$, the standard subgroup $W_{T \smallsetminus t}$ is spherical or irreducible affine. Quasi-Lannér Coxeter groups were classified by Koszul  \cite{LectHypCoxGrKoszul} and Chein \cite{chein}. The geometric representation of a quasi-Lannér Coxeter group of rank~$N$ preserves a unique (up to a positive scalar) Lorentzian bilinear form $B$ on $\R^N$, and hence preserves a unique ellipsoid $\Ec$ of $\Pb (\R^N)$. This ellipsoid, endowed with the Hilbert metric, is isometric to the real hyperbolic space of dimension $N-1$. Write the image of $t\in T$ under the geometric representation as $\mathrm{Id}-\alpha_t\otimes v_t$,  where $\alpha_t\in V^*$ and $v_t\in V$. The polytope $\Delta = \Pb (\{ \alpha_t (x) \leqslant 0 \})$ is contained in $\overline{\Ec}$ and is a finite-volume simplex. Conversely, if $\Delta$ is a finite-volume simplex in the hyperbolic space $\Hb^{d}$, $d \geqslant 2$, all whose dihedral angles are submultiples of $\pi$, then the group generated by the hyperbolic reflections across the facets of $\Delta$ is a quasi-Lannér Coxeter group.

\begin{lemma}[See also de la Harpe \cite{de_la_harpe} and Edgar \cite{Edgar_quasi_lanner}]\label{lem:sub_quasi_lanner}
	Let $W_S$ be a large Coxeter group. There exists $T \subset S$ such that $W_T$ is quasi-Lannér.
\end{lemma}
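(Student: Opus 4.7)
The plan is to choose $T \subseteq S$ of minimum cardinality such that $W_T$ is large; such a $T$ exists because $W_S$ itself is large and $S$ is finite. I would show that this $T$ does the job.

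First I would check that $W_T$ is irreducible. Were $W_T$ to decompose nontrivially as a direct product $W_{T_1} \times \cdots \times W_{T_k}$ of its components (with $k \geq 2$), then since direct products of virtually abelian groups are virtually abelian, at least one factor $W_{T_i}$ would itself be large—for otherwise every component of $W_T$ would be spherical or affine and $W_T$ itself virtually abelian, contradicting largeness. But each $T_i \subsetneq T$, contradicting the minimality of $T$.

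Next, for each $t \in T$ I would show that $W_{T \setminus \{t\}}$ is either spherical or irreducible affine. Minimality of $T$ forces $W_{T \setminus \{t\}}$ not to be large, so each of its components is spherical or affine. Suppose for contradiction that $W_{T \setminus \{t\}}$ has an affine component $W_U$ with $U \subsetneq T \setminus \{t\}$, i.e., some vertex of $T \setminus \{t\}$ lies outside $U$. Since $\mathcal{G}_{W_T}$ is connected whereas $U$ is a component of $\mathcal{G}_{W_{T \setminus \{t\}}}$, no edge of $\mathcal{G}_{W_T}$ joins $U$ to $T \setminus (U \cup \{t\})$, so $t$ must be the vertex linking $U$ to the rest of $T$; in particular $t$ is adjacent to some vertex of $U$ and hence $W_{U \cup \{t\}}$ is irreducible. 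Since $|U \cup \{t\}| < |T|$, the standard subgroup $W_{U \cup \{t\}}$ is not large by minimality of $T$, so being irreducible it is either spherical or affine.

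To derive a contradiction I would invoke the classical consequence of the classification of irreducible affine Coxeter groups that any proper standard subgroup of such a group is spherical (equivalently, removing a node from an extended Dynkin diagram of affine type leaves a disjoint union of finite-type diagrams). Together with the elementary fact that standard subgroups of spherical Coxeter groups are spherical, this forces $W_U$ to be spherical in both cases, contradicting the assumption that $W_U$ is affine. I expect this final appeal to the classification to be the main substantive point of the argument; the rest reduces to routine bookkeeping with components and connectedness of Coxeter diagrams.
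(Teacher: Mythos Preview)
Your proof is correct and follows essentially the same approach as the paper. The only organizational difference is that the paper chooses $T$ minimal among subsets with $W_T$ \emph{irreducible} and large (so irreducibility is built in), whereas you choose $T$ of minimum cardinality with $W_T$ large and then verify irreducibility separately; both arguments then hinge on the same classification fact that proper standard subgroups of an irreducible affine Coxeter group are spherical.
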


\begin{proof}
	Consider $\mathcal{T} = \{ T \subset S \,|\, W_T \textrm{ is irreducible large}\}$ and let $T$ be a minimal element of $\mathcal{T}$. We claim that $W_T$ is quasi-Lannér. Pick $t \in T$. By minimality of $T$, any irreducible factor of $W_{T \smallsetminus t}$ is spherical or affine. Suppose that $W_{T \smallsetminus t}$ is not spherical. Then $W_{T \smallsetminus t}$ has at least one affine irreducible factor $W_U$. But $W_T$ is irreducible, so $W_{U \cup t}$ must be irreducible. By the classification of affine Coxeter groups, $W_{U \cup t}$ is not affine and is hence large, and so, by minimality of $T$, we see that $T = U \cup \{ t\}$. Hence, $W_{T \smallsetminus t}$ is irreducible affine. We conclude that~$W_T$ is quasi-Lannér.
\end{proof}

\subsection{Proof of Proposition~\ref{prop_dlambda} in the presence of an invariant round domain}

A convex domain, i.e., a properly convex open subset, $\O \subset \Pb (V)$ is \emph{round} if $\O$ is strictly convex with~$\Cc^1$ boundary. In this section, we prove Proposition \ref{prop_dlambda} in the case where $\rho$ is irreducible and preserves a round convex domain. This step will be needed later for the proof of the general case.

Let $\O \subset \Pb (V)$ be a round convex domain and let $\sigma=\Id - \alpha \otimes v$ be a linear reflection of $V$ which preserves $\Omega$. We say that $\sigma$ is a \emph{reflection of $\Omega$} if $[\ker (\alpha)] \cap \Omega \neq \varnothing$. In this case, $H_\sigma$ denotes the subset $[\ker (\alpha)] \cap \overline{\Omega}$ and is called \emph{the hyperplane of reflection of $\sigma$ (in $\Omega$)}. The \emph{polar}~$p_\sigma$ of the reflection $\sigma$ is the point $p_\sigma = [v] \in \Pb (V)$.

\begin{remark}\label{rem:product_two_refl_with_disjoint_hyperplanes}
	If $\sigma,\tau$ are two reflections of $\Omega$ such that $H_\sigma \cap H_\tau = \varnothing$, then $\sigma \tau$ is conjugate to~$D(\lambda)$ for some $\lambda > 1$.
\end{remark}


\begin{lemma}\label{lem:round_case}
	Let $\Gamma < \GL (V)$ be a discrete subgroup that acts irreducibly on $V$ and preserves a round convex domain $\Omega \subset \mathbb{P}(V)$. If $\Gamma$ contains a reflection of $\Omega$, then $\Gamma$ contains two reflections $\sigma,\tau \in \Gamma$ of $\Omega$ such that $\sigma \tau$ is conjugate to $D(\lambda)$ for some $\lambda > 1$.
\end{lemma}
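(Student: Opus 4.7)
The plan is to produce a reflection $\tau \in \Gamma$ of $\Omega$ whose hyperplane $H_\tau$ is disjoint from $H_\sigma$ in $\bar{\Omega}$; then Remark~\ref{rem:product_two_refl_with_disjoint_hyperplanes} immediately gives that $\sigma \tau$ is conjugate to some $D(\lambda)$ with $\lambda > 1$. A standard argument on invariant projective lines through $p_\sigma$ and points of $H_\sigma \cap \Omega$ shows that the polar $p_\sigma$ of any reflection of a properly convex domain lies outside $\bar{\Omega}$, so the fixed-point set of $\sigma$ in $\bar{\Omega}$ is exactly $K := H_\sigma \cap \bar{\Omega}$, a compact convex set. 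It therefore suffices to find $\gamma \in \Gamma$ with $\gamma K \cap K = \emptyset$ and set $\tau := \gamma \sigma \gamma^{-1}$.

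Next I would show that $\Gamma$ is infinite: if $\Gamma$ were finite, averaging any interior vector of an invariant properly convex cone over $\Gamma$ would produce a nonzero $\Gamma$-fixed vector in $V$, contradicting irreducibility. Once $\Gamma$ is infinite, discrete, irreducible in $\GL(V)$, and preserves the strictly convex $\mathcal{C}^1$ domain $\Omega$, the dynamical theory of automorphisms of convex domains (Benoist \cite{auto_convex_benoist}) supplies a biproximal element $g \in \Gamma$: one with a unique simple eigenvalue of maximal modulus and a unique simple one of minimal modulus, giving an attracting fixed point $x^+ \in \partial \Omega$ and a repelling fixed point $x^- \in \partial \Omega$. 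Because $\partial \Omega$ is strictly convex and $\mathcal{C}^1$, the repelling hyperplane $H^-$ of $g$ meets $\bar{\Omega}$ only at the singleton $\{x^-\}$, so $(g^n)$ converges to $x^+$ uniformly on compact subsets of $\bar{\Omega} \setminus \{x^-\}$.

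I would then arrange that both $x^+$ and $x^-$ lie off $H_\sigma$. The limit set $\Lambda_\Gamma \subset \partial \Omega$ is a nonempty closed $\Gamma$-invariant subset of $\mathbb{P}(V)$, so by irreducibility its $\mathbb{R}$-linear span in $V$ is all of $V$; in particular $\Lambda_\Gamma \not\subset H_\sigma$. Attracting fixed points of biproximal elements of $\Gamma$ are dense in $\Lambda_\Gamma$, so I can first pick a biproximal $g_1 \in \Gamma$ with attracting fixed point off $H_\sigma$; a standard ping-pong construction (e.g., replacing a suitable pair by a high power of $g_1^N g_2^{-N}$) then upgrades this to a biproximal $g \in \Gamma$ whose attracting \emph{and} repelling fixed points both avoid the closed proper subset $H_\sigma \cap \partial\Omega$. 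With such $g$ fixed, $K$ is disjoint from $\{x^-\}$, so $g^n K$ Hausdorff-converges to $\{x^+\}$; since $x^+ \notin K$ and $K$ is compact, $g^n K \cap K = \emptyset$ for all $n$ large enough, and $\tau := g^n \sigma g^{-n}$ is as required.

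The main obstacle I anticipate is the biproximal upgrade above: producing a biproximal element of $\Gamma$ whose attracting fixed point lies off $H_\sigma$ is a soft consequence of irreducibility plus density of such fixed points in $\Lambda_\Gamma$, but simultaneously forcing the repelling fixed point off $H_\sigma$ requires a genuine ping-pong or perturbation argument exploiting strict convexity and $\mathcal{C}^1$-smoothness of $\partial \Omega$. Every other step in the plan is a direct consequence of the hypotheses together with the structural remarks about reflections of properly convex domains.
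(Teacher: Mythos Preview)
Your approach is correct, but the paper takes a different and somewhat slicker route that sidesteps the biproximal upgrade you flag as the main obstacle.

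The paper does not fix $\sigma$ and try to push $H_\sigma$ off itself with a single element. Instead, it exploits Lemma~\ref{lem:polar_limit}: for reflections of a round $\Omega$, if the \emph{polars} converge to a boundary point $a$, then the entire hyperplanes $H_{\sigma_n}$ Hausdorff-converge to the singleton $\{a\}$. Given this, the paper simply picks two distinct points $a,b\in\Lambda_\Gamma$ and uses the minimality property $\Lambda_\Gamma\subset\overline{\Gamma\cdot p_\sigma}$ to find sequences $(\gamma_n),(\delta_n)$ with $\gamma_n p_\sigma\to a$ and $\delta_n p_\sigma\to b$. The hyperplanes of $\gamma_n\sigma\gamma_n^{-1}$ and $\delta_n\sigma\delta_n^{-1}$ then shrink to the distinct points $a$ and $b$, so are eventually disjoint. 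No positioning of fixed points relative to $H_\sigma$, no ping-pong, and indeed no biproximal element is ever singled out.

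Your argument trades Lemma~\ref{lem:polar_limit} for North--South dynamics: you need a biproximal $g$ with both $x^+,x^-\notin H_\sigma$ so that $g^nK\to\{x^+\}$ stays away from $K$. As you note, getting one fixed point off $H_\sigma$ is soft (irreducibility forces $\Lambda_\Gamma\not\subset H_\sigma$), but getting both requires a genuine density/ping-pong step; this is doable (e.g.\ observe $a\in\Lambda_\Gamma\setminus H_\sigma$ implies $\sigma(a)\in\Lambda_\Gamma\setminus H_\sigma$ is a second such point, then approximate the pair $(a,\sigma(a))$ by $(x_g^+,x_g^-)$), but it is extra work. The paper's route buys you simplicity; yours buys a proof that never needs the polar--hyperplane correspondence of Lemma~\ref{lem:polar_limit}.
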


For $a \in \partial \Omega$, denote by $T_a \partial \Omega$ the tangent hyperplane to $\partial \O$ at $a$. To show Lemma~\ref{lem:round_case}, we will need the two following lemmas.

\begin{lemma}\label{lem:polar}
	Let $\Omega \subset \Pb (V)$ be a round convex domain and $\sigma$ be a reflection of $\O$. If $a \in H_\sigma \cap \partial \Omega$, then $p_\sigma \in T_a \partial \Omega$. 
\end{lemma}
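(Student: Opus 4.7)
The plan is to exploit two properties of the reflection $\sigma$: that it fixes the point $a$, and that it preserves the boundary $\partial\Omega$. Because $a \in H_\sigma \subset [\ker\alpha]$ and $[\ker\alpha]$ is the fixed projective hyperplane of $\sigma$, the point $a$ is indeed fixed. Since $\sigma \in \mathrm{GL}(V)$ preserves $\Omega$, it preserves $\partial\Omega$, and as $\Omega$ is round (so $\partial\Omega$ is $\mathcal{C}^1$ at $a$), the unique supporting hyperplane $T_a\partial\Omega$ at $a$ is $\sigma$-invariant.

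Next, I would classify which projective hyperplanes of $\mathbb{P}(V)$ are preserved by the involution $\sigma = \mathrm{Id} - \alpha \otimes v$. A hyperplane $[\ker\beta]$ with $\beta \in V^*\smallsetminus\{0\}$ is preserved exactly when $\sigma^*\beta = \pm\beta$. A direct computation gives $\sigma^*\beta = \beta - \beta(v)\,\alpha$, so this condition reduces to $\beta(v)\alpha = 0$ or $2\beta = \beta(v)\alpha$. The first case says $p_\sigma = [v] \in [\ker\beta]$, while the second says $\beta \in \mathbb{R}\alpha$, i.e.\ $[\ker\beta] = [\ker\alpha]$. Applying this dichotomy to $\beta$ defining $T_a\partial\Omega$ yields two possibilities: either $p_\sigma \in T_a\partial\Omega$, which is the desired conclusion, or $T_a\partial\Omega = [\ker\alpha]$.

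The last step is to rule out the alternative $T_a\partial\Omega = [\ker\alpha]$. By hypothesis, $\sigma$ is a reflection \emph{of} $\Omega$, which in the terminology introduced just before the lemma means $[\ker\alpha] \cap \Omega \neq \varnothing$. But $T_a\partial\Omega$ is a supporting hyperplane to the convex set $\overline{\Omega}$ at the boundary point $a$, hence disjoint from the open set $\Omega$. The two statements are incompatible, so we must be in the first case and $p_\sigma \in T_a\partial\Omega$.

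I do not anticipate any real obstacle here; the only subtlety worth flagging is the dependence on the roundness hypothesis, which is used solely to guarantee that the supporting hyperplane at $a$ is unique (and thus necessarily $\sigma$-invariant rather than merely one element of a set of supporting hyperplanes permuted by $\sigma$).
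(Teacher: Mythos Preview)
Your proof is correct and follows essentially the same approach as the paper. The paper's argument is terser---it simply notes that $T_a\partial\Omega$ is $\sigma$-invariant (by roundness) and is not the span of $H_\sigma$, hence must contain $p_\sigma$---but the underlying dichotomy of $\sigma$-invariant hyperplanes and the reason for excluding $T_a\partial\Omega=[\ker\alpha]$ are exactly as you spell them out.
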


\begin{proof}
	Since $\Omega$ is round, we have that $T_a \partial \Omega$ is $\sigma$-invariant. Furthermore, $T_a \partial \Omega$ is not contained in the span of $H_\sigma$, hence $T_a \partial \Omega$ must contain the polar of $\sigma$.
\end{proof}

\begin{lemma}\label{lem:polar_limit}
	Let $\O \subset \Pb (V)$ be a round convex domain and $(\sigma_n)_{n \in \N}$ a sequence of reflections of~$\O$.
	Suppose that the polars of the $\sigma_n$ converge to $a \in \partial \Omega$. Then the closed subsets $H_{\sigma_n}$ converge to $a$ in the Hausdorff topology.
\end{lemma}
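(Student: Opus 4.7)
The plan is to proceed in two stages: first establish that every sequence of boundary points $b_n \in H_{\sigma_n} \cap \partial \Omega$ converges to $a$, and then upgrade this to Hausdorff convergence of the full sets $H_{\sigma_n}$ through an extremal-points argument. Before either stage, I would observe that $H_{\sigma_n} \cap \partial \Omega$ is always nonempty: the hyperplane $[\ker \alpha_n]$ meets $\Omega$ by definition of a reflection of $\Omega$, so the compact convex set $H_{\sigma_n}$ has points in $\Omega$ but cannot equal the whole projective hyperplane (as $\Omega$ is properly convex), and must therefore also meet $\partial \Omega$.

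For the first stage, given such a $b_n$, I would pass to a subsequence so that $b_n \to b \in \partial \Omega$. Lemma~\ref{lem:polar} gives $p_{\sigma_n} \in T_{b_n} \partial \Omega$, and since $\partial \Omega$ is $\Cc^1$ the map sending a boundary point to its tangent hyperplane is continuous, so $T_{b_n} \partial \Omega \to T_b \partial \Omega$ in the Grassmannian. Taking the limit of the containment $p_{\sigma_n} \in T_{b_n}\partial \Omega$ yields $a \in T_b \partial \Omega$. By strict convexity of $\Omega$, the tangent hyperplane $T_b \partial \Omega$ meets $\overline{\Omega}$ only at $b$, which forces $a = b$. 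Hence every subsequential limit of $(b_n)$ equals $a$, and so $b_n \to a$.

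For the second stage, the direction $d(a, H_{\sigma_n}) \to 0$ is immediate from the first stage combined with the nonemptiness of $H_{\sigma_n} \cap \partial \Omega$. For the other direction of Hausdorff convergence, I would fix an affine chart containing $\overline{\Omega}$ with its induced Euclidean metric and suppose toward a contradiction that, after extraction, there exist $x_n \in H_{\sigma_n}$ with $d(x_n, a) \geq \epsilon$ for some fixed $\epsilon > 0$. The key observation is that every extremal point of the compact convex set $H_{\sigma_n}$ must lie in $\partial \Omega$: any point of $\Omega \cap [\ker \alpha_n]$ is a relative interior point of the open convex set $\Omega \cap [\ker \alpha_n] \subset H_{\sigma_n}$, and hence not extremal. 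By Carath\'eodory's theorem, $x_n$ is a convex combination of finitely many extremal points of $H_{\sigma_n}$; since Euclidean balls in the chart are convex, at least one such extremal point $e_n \in H_{\sigma_n} \cap \partial \Omega$ must satisfy $d(e_n, a) \geq \epsilon$, contradicting the first stage.

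The main (mild) obstacle is verifying that extremal points of $H_{\sigma_n}$ really sit on $\partial \Omega$; this is exactly the observation in the previous paragraph, and it depends on $\Omega$ being strictly convex. Beyond that, the argument rests entirely on the two defining features of roundness: strict convexity, ensuring each tangent hyperplane touches $\overline{\Omega}$ at a single point, and $\Cc^1$ regularity, ensuring tangent hyperplanes vary continuously with their basepoint.
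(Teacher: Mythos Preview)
Your proof is correct and takes a genuinely different route from the paper's. The paper argues in two lines: first the ambient hyperplanes $U_n := [\ker\alpha_n]$ themselves converge to $T_a\partial\Omega$ (asserted as ``clear''---implicitly using that for a reflection of a round domain the reflecting hyperplane is the polar hyperplane of $p_{\sigma_n}$ with respect to $\Omega$, a correspondence that extends continuously to $\partial\Omega$), and then $H_{\sigma_n}=U_n\cap\overline\Omega\to T_a\partial\Omega\cap\overline\Omega=\{a\}$. You never track the hyperplanes: you use Lemma~\ref{lem:polar} together with $\Cc^1$-continuity of tangent hyperplanes and strict convexity to force $H_{\sigma_n}\cap\partial\Omega\to\{a\}$, and then an extremal-point/Carath\'eodory argument to extend this to all of $H_{\sigma_n}$. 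Your route is more explicit about where each half of ``round'' enters and avoids the unstated continuity of the polar-to-hyperplane map, at the cost of a longer second stage. One minor caveat: your nonemptiness argument for $H_{\sigma_n}\cap\partial\Omega$ relies on $[\ker\alpha_n]\not\subset\overline\Omega$, which fails when $\dim V=2$ (the ``hyperplane'' is then a single interior point); this is harmless here since the lemma is only invoked with $\dim V\ge 3$.
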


\begin{proof}
	Clearly, the span $U_n$ of $H_{\sigma_n}$ converges to $T_a \partial \Omega$. This implies that $H_{\sigma_n}$ converges to $T_a \partial \Omega \cap \overline{\Omega}$, which is equal to $\{a\}$ since $\O$ is round.
\end{proof}

\begin{proof}[Proof of Lemma~\ref{lem:round_case}]
	Since $\G$ acts irreducibly on $V$ and preserves a properly convex domain in $\mathbb{P}(V)$, we have that $\G$ contains a proximal element by \cite[Prop.~3.1]{auto_convex_benoist}. Hence, the proximal limit set $\Lambda_\Gamma$ of $\Gamma$, which is the closure in $\Pb(V)$ of the set of attracting fixed points of proximal elements of $\Gamma$, is nonempty. 
Furthermore, every nonempty closed subset of $\Pb(V)$ which is $\Gamma$-invariant contains $\Lambda_{\Gamma}$ \cite[Lemma 2.5]{auto_convex_benoist}. In particular, for every $x\in\Pb(V)$, $\Lambda_{\Gamma}\subset\overline{\Gamma\cdot x}.$
	
	\smallskip
	
	
	\smallskip
	
	Since $\Lambda_{\Gamma}$ is invariant under $\Gamma$, we must have that $\Lambda_{\Gamma}$ contains at least two points. Let $a,b$ be two distinct points of $\Lambda_\Gamma$ and let $p$ be the polar of a reflection $\sigma\in\Gamma$ of $\Omega$. By the previous paragraph, there exist two sequences $(\gamma_n)_n$ and $(\delta_n)_n$ in $\Gamma$ such that $\gamma_n (p)$ converges to $a$ and $\delta_n (p)$ converges to $b$. Hence, the hyperplanes of the reflections $\gamma_n \sigma \gamma_n^{-1}$ (resp., $\delta_n \sigma \delta_n^{-1}$) converge to $a$ (resp., $b$) in the Hausdorff topology by Lemma~\ref{lem:polar_limit}. For $n$ large enough, the hyperplanes $H_{\gamma_n \sigma \gamma_n^{-1}}$ and $H_{\delta_n \sigma \delta_n^{-1}}$ are disjoint, so by Remark~\ref{rem:product_two_refl_with_disjoint_hyperplanes}, the product $(\gamma_n \sigma \gamma_n^{-1})(\delta_n \sigma \delta_n^{-1})$ is conjugate to $D(\lambda)$ for some $\lambda > 1$.
\end{proof}

\subsection{Proof of Proposition~\ref{prop_dlambda} in the general case}

We will need another lemma.

\begin{lemma}\label{lem:irr=>dimV=N}
	Let $W_S$ be a quasi-Lannér Coxeter group and $\rho:W_S\to\GL(V)$ a representation of $W_S$ as a reflection group. If $W_S$ is of rank $N$ and $\rho$ is irreducible, then $\dim (V) = N$.
\end{lemma}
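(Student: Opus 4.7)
By Theorem~\ref{thm:tits-vinberg2}(\ref{item:irreducible_rep}), the irreducibility of $\rho$ reduces the claim to showing that the $N \times N$ Cartan matrix $\Cart$ of $\rho$ is nonsingular. The plan is to argue by contradiction: assume some nonzero $y \in \R^S$ satisfies $\Cart y = 0$, and derive a contradiction by feeding $y$ into Perron--Frobenius theory for the irreducible nonnegative matrix $A := 2\Id - \Cart$. Since $W_S$ is quasi-Lannér (and hence large), Proposition~\ref{prop:types_correlation} gives $\rho(A) > 2$, while $\Cart y = 0$ rewrites as $A y = 2 y$. I would partition $S = S^+ \sqcup S^- \sqcup S^0$ according to the signs of the components of $y$.

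If $y$ does not change sign, I may assume $y \geq 0$. If the zero set $S^0$ is nonempty, then for $s \in S^0$ the equation $(\Cart y)_s = 0$ reduces to $\sum_{t \in S^+} \Cart_{st}\,y_t = 0$, in which each summand is non-positive; this forces $\Cart_{st} = 0$ for all $s \in S^0$ and $t \in S^+$, hence $m_{st} = 2$ by Cartan-compatibility, which disconnects the Coxeter diagram of $W_S$ and contradicts irreducibility. So $y > 0$, and then $y$ is a positive eigenvector of $A$ with eigenvalue $2$, forcing $\rho(A) = 2$ by Perron--Frobenius --- contradicting $\rho(A) > 2$.

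The main case is when $y$ changes sign. Rearranging the $S^+$-rows of $A y = 2 y$ gives the componentwise inequality
\[
(A_{S^+}\, y|_{S^+})_s \;=\; 2 y_s + \sum_{t \in S^-} |\Cart_{st}||y_t| \;\geq\; 2 y_s \qquad (s \in S^+),
\]
with $y|_{S^+} > 0$; by Collatz--Wielandt, $\rho(A_{S^+_i}) \geq 2$ for every irreducible block $W_{S^+_i}$ of $W_{S^+}$. Now pick $t_0 \in S^-$. When $S^+ \subsetneq S \sm \{t_0\}$, the quasi-Lannér hypothesis makes $W_{S \sm \{t_0\}}$ spherical or irreducible affine, and in either case its proper standard subgroup $W_{S^+}$ is spherical (using the classical fact that every proper standard subgroup of an irreducible affine Coxeter group is spherical); then each $\Cart_{S^+_i}$ is of positive type, giving $\rho(A_{S^+_i}) < 2$, a contradiction. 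In the remaining case $S^+ = S \sm \{t_0\}$ (so $S^- = \{t_0\}$ and $S^0 = \emptyset$), directly reading the $t_0$-row of $\Cart y = 0$ yields $2 y_{t_0} + \sum_{u \in S^+} \Cart_{t_0 u}\, y_u = 0$, whose left-hand side is strictly negative because $y_{t_0} < 0$ and each $\Cart_{t_0 u}\,y_u \leq 0$ for $u \in S^+$ --- another contradiction.

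The principal obstacle is the mixed-sign case, where the lower bound $\rho(A_{S^+_i}) \geq 2$ from Collatz--Wielandt must be played against an upper bound coming from the sphericity of $W_{S^+}$; the decisive structural input is the classical fact that proper standard subgroups of irreducible affine Coxeter groups are spherical, which is exactly the leverage the quasi-Lannér hypothesis provides.
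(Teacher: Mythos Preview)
Your proof is correct and takes a genuinely different route from the paper's. Both arguments begin by reducing, via Theorem~\ref{thm:tits-vinberg2}(\ref{item:irreducible_rep}), to showing that the Cartan matrix $\Cart$ has full rank, but then diverge completely. The paper argues case-by-case using Vinberg's results (\cite[Lem.~18, Prop.~26]{the_bible}) to handle the situations where some $W_{S\sm t}$ is spherical or is affine with $\Cart_{S\sm t}$ of zero type, and then disposes of the residual $\widetilde{A}_{N-2}$ case geometrically: the Tits--Vinberg domain would have to be a simplex whose vertices are permuted by $W_S$, contradicting strong irreducibility (Theorem~\ref{thm:tits-vinberg2}(\ref{item:strongly_irreducible})). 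Your argument is purely linear-algebraic: a nontrivial kernel vector $y$ yields, via Collatz--Wielandt applied to $A_{S^+}$, the bound $\rho(A_{S^+_i})\geq 2$ on each irreducible block, while the quasi-Lann\'er hypothesis forces $W_{S^+}$ (a proper standard subgroup of some $W_{S\sm \{t_0\}}$) to be spherical, hence $\rho(A_{S^+_i})<2$ by Proposition~\ref{prop:types_correlation}. Your approach is more elementary and self-contained, avoiding both the external citations and the convex-geometric machinery; the paper's approach, on the other hand, foreshadows the geometric dichotomy used immediately afterwards in the proof of Proposition~\ref{prop_dlambda}. A minor remark: you use $\rho$ for both the representation and the spectral radius, which is harmless here but worth disambiguating.
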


\begin{proof}
Let $r$ be the rank of the Cartan matrix $\Cart$ of $\rho$. By Theorem~\ref{thm:tits-vinberg2}.\eqref{item:irreducible_rep}, $\dim (V) = r$, and so we have to show that $r=N$. It follows from \cite[Lem.~18]{the_bible} that if there exists $t \in S$ such that $W_{S \smallsetminus t}$ is spherical, then $r=N$. Similarly, the proof of \cite[Prop.~26]{the_bible} shows that if there exists $t \in S$ such that $W_{S \smallsetminus t}$ is irreducible affine and $\Cart_{S \smallsetminus t}$ is of zero type, then $r=N$.

\smallskip

Since $W_S$ is quasi-Lannér, for every $t \in S$, we have that $W_{S \smallsetminus t}$ is spherical or irreducible affine. By Proposition \ref{prop:types_correlation} and the previous paragraph, we have to exclude the following: assume there exists $t \in S$ such that $W_{S \smallsetminus t}$ is affine, $\Cart_{S \smallsetminus t}$ is of negative type, and $r < N$. In this case, $\dim( \Omega_{\mathrm{TV}}) = r-1 \leqslant N-2$. First, if $N=3$, then $W_S$ is a hyperbolic triangle group which acts properly discontinuously on the $1$-dimensional properly convex domain $\Omega_{\mathrm{TV}}$. This is absurd.

Assume $N \geqslant 4$. Proposition \ref{prop:types_correlation} shows that $W_{S \smallsetminus t} \simeq \widetilde{A}_{N-2}$.  The convex domain $\Omega_{\mathrm{TV}}$ is evidently preserved by $\rho (W_{S \smallsetminus t})$. However the representations of dimension $\leqslant N-1$ of $\widetilde{A}_{N-2}$ as a reflection group with negative-type Cartan matrix are irreducible\footnote{since such a Cartan matrix is invertible by a straightforward determinant computation.} and each preserve a unique convex domain which is a simplex (see \cite[Lem.~3.22.(c)]{danciger2023convex}). Hence, $\Omega_{\mathrm{TV}}$ must be a simplex, so that the lines given by the vertices of that simplex are permuted by $W_S$. It follows that the restriction of $\rho$ to some finite-index subgroup of $W_S$ is not irreducible, which is absurd by Theorem~\ref{thm:tits-vinberg2}.\eqref{item:strongly_irreducible}.
\end{proof}

\begin{proof}[Proof of Proposition \ref{prop_dlambda}]
		By Proposition~\ref{lem:sub_quasi_lanner}, there exists a standard subgroup $W_S$ of $W$ which is quasi-Lannér. By Remark~\ref{rem:restriction_to_standard_subgroup}, the restriction of a representation as a reflection group to a standard subgroup is again a representation as a reflection group, so that we may assume that $W$ is quasi-Lannér. By Lemma~\ref{lem:restrict_to_irreducible_rep}, we may assume that $\rho$ is irreducible. Denote by $\Cart$ the Cartan matrix of $\rho$.
	
	\smallskip
	 Let $N$ be the rank of $W$ and $\Delta = \Pb (\{ \bigcap_{s \in S} \alpha_s \leqslant 0\})$. Lemma~\ref{lem:irr=>dimV=N} shows that $\dim (V) = N$. The polytope $\Delta$ is of dimension $N-1$ and has $N$ facets, so $\Delta$ is a simplex.
	 
 	 There are two cases to distinguish:
	 \begin{enumerate}
	 	\item\label{item:round} for every affine subset $T \subset S$, the Cartan matrix $\Cart_T$ is of zero type;
	 	
	 	\item\label{item:not_round} there exists an affine subset $T \subset S$ such that $\Cart_T$ is of negative type. In that case, $W_T \simeq \widetilde{A}_{N-1}$ by Proposition \ref{prop:types_correlation}.
	 \end{enumerate}
	 
	Assume case \eqref{item:round}. We claim that $\Delta$ is a 2-perfect Coxeter polytope in the sense of \cite{cox_in_hil}. Fix a vertex $x$ of $\Delta$. Among the $N$ facets of $\Delta$, precisely one does not contain $x$. Denote by $t\in S$ the corresponding generator. Since $\rho(W_{S\setminus t})$ preserves $x$, the representation $\rho$ induces a representation of $W_{S\setminus t}$ on $V/x$ as a reflection group. Denote by $\Omega_x$ the associated Tits--Vinberg domain. Since $W_{S \smallsetminus t}$ is spherical or irreducible affine (and the link of $x$ in $\Delta$ is a simplex), the action of $W_{S \smallsetminus t}$ on $\Omega_x$ is cocompact \cite[Thm.~2.(3)]{the_bible}. This shows that $\Delta$ is a 2-perfect Coxeter polytope. Note that $W_S$ is hyperbolic relative to its irreducible affine subgroups since $W_S$ can be realized as the reflection group associated to a finite-volume hyperbolic simplex. Moreover, the subsets $T \subset S$ such that $\Cart_T$ is of zero type are exactly the affine subsets of $S$, hence \cite[Cor~8.11]{cox_in_hil} \color{black} shows that the group $\rho (W_S)$ preserves a round convex domain. Lemma~\ref{lem:round_case} now concludes the proof in case \eqref{item:round}. 
	 
%
	 
	 \smallskip
In case \eqref{item:not_round}, the proof of \cite[Lem.~3.22]{danciger2023convex} together with Lemma \ref{lem:restrict_to_irreducible_rep} show that $\rho (W_T)$ contains an element conjugate to $D(\lambda)$ for some $\lambda > 1$ and this element is the product of two reflections.
\end{proof}

\section{Lie groups containing a matrix $D(\lambda)$}\label{sec:lie-groups-containing-dlambda}

The goal of this section is to list the connected semisimple complex Lie subgroups of $\mathrm{GL}_n(\mathbb{C})$ acting irreducibly on $\mathbb{C}^n$ and containing a matrix of the form $D(\lambda)$ for some $\lambda \in \mathbb{C}^\times$ that is not a root of unity.

For $G = \SL_n(\Cb)$, $\textrm{SO}_n(\Cb)$, or, if $n$ is even, $\textrm{Sp}_{n}(\Cb)$, we call the representation $G \rightarrow \mathrm{GL}_n(\Cb)$, $A\mapsto A$ the \emph{defining representation}. By the \emph{dual defining representation} of $\SL_n(\Cb)$, we mean the representation $\SL_n(\Cb) \rightarrow \mathrm{GL}_n(\Cb)$,  $A\mapsto (A^\top)^{-1}$. We use the same names to refer to the associated representations on the level of Lie algebras. This section is dedicated to the proof of the following theorem.

\begin{theorem}\label{thm:rep_containing_d_lambda}
	Let $G$ be a connected semisimple complex Lie group and $\rho:G \to \GL_n (\Cb)$ be an irreducible faithful representation. Assume that $\rho(G)$ contains a diagonal matrix of the form~$D(\lambda)$, where $\lambda$ is not a root of unity.
Then $G$ is either $\SL_n(\Cb)$, $\SO_n(\Cb)$, or, if $n$ is even, $\mathrm{Sp}_{n}(\Cb)$. Furthermore, there is an automorphism $\phi\in\mathrm{Aut}(G)$ such that $\rho\circ\phi$ is conjugate to the defining representation.
\end{theorem}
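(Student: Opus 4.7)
The strategy is to extract from $D(\lambda) \in \rho(G)$ a distinguished semisimple element $H \in \mathfrak{g}$ satisfying $\rho_\ast(H) = \textrm{diag}(1, 0, \dotsc, 0, -1)$, and then use its very sparse spectrum on $V$ to pin down the pair $(G, \rho)$. Because $\lambda$ is not a root of unity, the Zariski closure of $\langle D(\lambda) \rangle$ in $\rho(G)$ is a one-parameter torus whose Lie algebra is spanned by $\textrm{diag}(1, 0, \dotsc, 0, -1)$; faithfulness of $\rho$ then singles out a unique such semisimple $H \in \mathfrak{g}$.

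I would next analyze the decomposition of $\mathfrak{g}$ into simple ideals. By irreducibility of $\rho$, one has $V = V_1 \otimes \cdots \otimes V_k$ with each $V_i$ an irreducible module for the corresponding simple factor $\mathfrak{g}_i$, and each $V_i$ nontrivial by faithfulness. Writing $H = \sum_i H_i$, the multiset of eigenvalues of $\rho_\ast(H)$ on $V$ is the multiset of sums of eigenvalues of the $\rho_{i,\ast}(H_i)$ on the $V_i$. A short combinatorial analysis of the requirement that this multiset equal $\{1, 0^{n-2}, -1\}$, combined with group-level faithfulness, shows that either $k = 1$, or else $k = 2$ with each $V_i \cong \Cb^2$---in which case $\mathfrak{g} \cong \mathfrak{sl}_2 \oplus \mathfrak{sl}_2$, $G \cong \SO_4(\Cb)$, and $\rho$ is the defining representation, matching the desired conclusion. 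So we may henceforth assume $\mathfrak{g}$ is simple. Place $H$ in a Cartan subalgebra $\mathfrak{h} \subset \mathfrak{g}$ and choose positive roots so that $\alpha(H) \geq 0$ for every positive root $\alpha$. The weights of $V$ evaluated at $H$ then lie in $\{-1, 0, 1\}$, and one-dimensionality of the $\pm 1$ eigenspaces of $\rho_\ast(H)$ forces $V$ to have a unique highest weight $\mu$ with $\mu(H) = 1$ and a unique lowest weight with $H$-value $-1$, each of multiplicity one. Since $\rho$ is faithful and irreducible, every root of $\mathfrak{g}$ is a difference of two weights of $V$, so $\alpha(H) \in \{-2, -1, 0, 1, 2\}$ for all roots, and $\mathrm{ad}(H)$ equips $\mathfrak{g}$ with a $\Z$-grading of length at most five.

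With this structure in place, write $H = \sum_j c_j \omega_j^\vee$ in the basis of fundamental coweights (so $c_j = \alpha_j(H) \in \{0, 1, 2\}$) and $\mu = \sum_j n_j \omega_j$ with $n_j \in \Z_{\geq 0}$; the equation $\mu(H) = \sum_j c_j n_j = 1$ is then highly restrictive. A case-by-case inspection across the Dynkin types $A_m, B_m, C_m, D_m, E_6, E_7, E_8, F_4, G_2$ and each admissible pair $(H, \mu)$---comparing the weight multiset of the irreducible $\mathfrak{g}$-module with highest weight $\mu$ against the target $\{1, 0^{n-2}, -1\}$---should rule out every exceptional type and every non-defining fundamental representation of the classical types (spin and half-spin representations, higher exterior and symmetric powers, and so on), leaving only the defining representations of $\mathfrak{sl}_n$, $\mathfrak{so}_n$, and $\mathfrak{sp}_n$. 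Lifting to the group level identifies $(G, \rho)$ as in the statement, with the type-$A$ outer automorphism accounting for the dual defining representation. The main obstacle is precisely this final classification step: each individual check is a straightforward computation with root systems and weight polytopes, but ensuring that no pair $(G, \rho)$ has been overlooked requires systematic care. The process could be streamlined by appealing to the known classification of short $\Z$-gradings of simple Lie algebras, adapted to allow gradings of length up to five (as occur for $\mathfrak{sl}_n$ and $\mathfrak{sp}_n$, where the highest root satisfies $\theta(H) = 2$).
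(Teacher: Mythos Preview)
Your strategy is sound in outline, but there is one concrete slip. The formula $\mu(H) = \sum_j c_j n_j$ is incorrect: the fundamental weights $\omega_j$ are dual to the simple coroots $\alpha_i^\vee$, not to the fundamental coweights $\omega_i^\vee$, so $\omega_j(\omega_i^\vee)$ is the $(j,i)$ entry of the inverse Cartan matrix rather than $\delta_{ij}$. Concretely, for $G = \SO_3(\Cb) \hookrightarrow \GL_3(\Cb)$ (the adjoint of $\mathfrak{sl}_2$) one has $c_1 = \alpha(H) = 1$ and $\mu = 2\omega$, so $\sum_j c_j n_j = 2 \neq 1 = \mu(H)$; your filter would discard a case the theorem must retain. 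The fix is either to expand $H$ in the simple coroots (then $\mu(H) = \sum_j n_j\,\omega_j(H)$ holds, with the coefficients $\omega_j(H)$ constrained only via the Cartan matrix), or to carry $C^{-1}$ through the case analysis; either way the bookkeeping is messier than you suggest, though still finite.

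Modulo this, your route is viable but genuinely different from the paper's. For the non-simple reduction, the paper argues indirectly via Dynkin's classification of maximal connected irreducible subgroups of classical groups, reaching the same $\SL_2(\Cb)\otimes\SL_2(\Cb)\cong\SO_4(\Cb)$ endpoint; your direct eigenvalue-sum argument on $V_1\otimes\cdots\otimes V_k$ is a self-contained alternative. For simple $G$, the paper avoids any type-by-type coweight check: observing that every nonzero weight whose entire Weyl orbit vanishes on $H$ must itself be zero (since $W$ acts irreducibly on $\mathfrak{h}^*$), it deduces that $\rho$ is quasi-minuscule; a short elementary lemma on finite group orbits then yields $|W\mu|\le 2\,\mathrm{rk}(G)$, and the standard tabulation of (quasi-)minuscule representations finishes immediately. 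This makes it transparent that no pair $(G,\rho)$ has been overlooked, which is precisely the concern you raise about your own case analysis.
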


Let $G$ be a complex semisimple Lie group. We denote by $\textrm{rk}(G)$ the rank of $G$. We will need the following terminology on representations. Let $\rho$ be a nontrivial finite-dimensional complex representation of $G$. One says $\rho$ is \emph{minuscule} if the Weyl group of $G$ acts transitively on the set of weights of~$\rho$, and \emph{quasi-minuscule} if the Weyl group of $G$ acts transitively on the set of nonzero weights of~$\rho$. When $G$ is simple, Theorem \ref{thm:rep_containing_d_lambda} will be a consequence of the following two propositions. 




\begin{proposition}	\label{Proposition_Quasiminusculerepresentations}
	Let $G$ be a connected complex simple Lie group and $\rho:G\to\GL_n(\Cb)$ an irreducible representation. Suppose that $\rho(G)$ contains a matrix $D(\lambda)$, where $\lambda$ is not a root of unity. Then  $\rho$ is a quasi-minuscule representation with at most $2\mathrm{rk}(G)+1$ weights. Furthermore, if $\rho$ is minuscule, then $n\leq 2\mathrm{rk}(G)$.
\end{proposition}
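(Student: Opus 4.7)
The plan is first to pick $g\in G$ with $\rho(g)=D(\lambda)$. Since $\rho(g)$ is semisimple and $\rho$ is faithful, $g$ is semisimple, hence lies in a maximal torus $T\subset G$. Setting $\mathfrak{h}:=\mathrm{Lie}(T)$, choose $h\in\mathfrak{h}$ with $\exp(h)=g$ so that $\mu(h)\in\{c,0,-c\}$ for every weight $\mu$ of $\rho$, where $c:=\log\lambda\ne 0$; this is possible because $\lambda$ is not a root of unity. The spectrum of $D(\lambda)$ forces a unique multiplicity-one weight $\mu_+$ with $\mu_+(h)=c$, a unique multiplicity-one weight $\mu_-$ with $\mu_-(h)=-c$, and every other weight to vanish on $h$. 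Since $d\rho$ is injective (because $G$ is connected semisimple and $\rho$ is faithful), every root $\alpha$ of $G$ satisfies $\alpha(h)\in c\cdot\{-2,-1,0,1,2\}$: for $X\in\mathfrak{g}_\alpha$, the operator $d\rho(X)$ shifts $V_\mu$ into $V_{\mu+\alpha(h)}$, and if $\alpha(h)$ lay outside $\{-2c,-c,0,c,2c\}$ then every such shift would land in a nonexistent eigenspace, forcing $d\rho(X)=0$. This yields a $5$-term grading of $\mathfrak{g}$ compatible with the $3$-term grading $V=V_1\oplus V_0\oplus V_{-1}$, with $\dim V_{\pm 1}=1$.

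Quasi-minuscularity then follows from a Weyl-orbit argument. For each $w\in W$, the element $w\cdot h$ is $G$-conjugate to $h$, so $\rho(\exp(w\cdot h))$ is $\GL_n(\Cb)$-conjugate to $D(\lambda)$, and the multiplicities of the eigenvalues $\lambda$, $1$, $\lambda^{-1}$ remain $1$, $n-2$, $1$. Let $\nu$ be a nonzero weight of $\rho$ of multiplicity $m_\nu$. If $m_\nu\ge 2$, the weight space $V_\nu$ has dimension $\ge 2$ and cannot be contained in either of the one-dimensional $\lambda^{\pm 1}$-eigenspaces of $\rho(\exp(w\cdot h))$; hence $\nu(w\cdot h)=0$ for all $w$, and irreducibility of the $W$-action on $\mathfrak{h}$ together with $h\ne 0$ give $\mathrm{span}(W\cdot h)=\mathfrak{h}$, forcing $\nu=0$, absurd. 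If $m_\nu=1$ and $\nu\notin W\cdot\mu_+$, then a value $\nu(w\cdot h)=\pm c$ would identify $\nu$ with the unique multiplicity-one weight of value $\pm c$ at $w\cdot h$, namely $w\cdot\mu_\pm\in W\cdot\mu_+$, contradicting $\nu\notin W\cdot\mu_+$; so again $\nu(w\cdot h)=0$ for all $w$ and hence $\nu=0$. Thus every nonzero weight of $\rho$ lies in $W\cdot\mu_+$, proving $\rho$ is quasi-minuscule.

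For the weight-count bound, I would replace $h$ by a Weyl conjugate and rescale $c=1$ to assume $h$ is dominant with $\alpha_i(h)\in\{0,1,2\}$ for every simple root $\alpha_i$. Since $h\ne 0$, at least one simple root takes a positive value on $h$, and the bound $\tilde\alpha(h)\le 2$ on the highest root $\tilde\alpha=\sum m_i\alpha_i$ restricts $h$ to a short explicit list of $\mathbb{Z}$-gradings of $\mathfrak{g}$ indexed by marked nodes of the extended Dynkin diagram. For each such grading, I would enumerate the irreducible $\mathfrak{g}$-modules that admit a compatible $3$-grading with $\dim V_{\pm 1}=1$, and verify in each case that $n=2+\dim V_0\le 2\mathrm{rk}(G)+1$, with the sharper bound $n\le 2\mathrm{rk}(G)$ in the minuscule case. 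The main obstacle I expect is this final classification step; the Weyl-orbit argument for quasi-minuscularity is by contrast purely formal and avoids any case analysis.
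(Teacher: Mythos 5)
Your argument for quasi-minuscularity follows essentially the same route as the paper: pass to a semisimple element of the Lie algebra with spectrum $(1,0,\dots,0,-1)$, observe the resulting three-step grading of $V$, and exploit irreducibility of the Weyl group action on $\mathfrak{h}$. Two small imprecisions there are worth flagging. First, ``choose $h$ with $\exp(h)=g$ so that $\mu(h)\in\{c,0,-c\}$'' is not immediate: the preimages of $g$ under $\exp$ differ by $2\pi i$-multiples of coweights, so the weights' values are only pinned modulo $2\pi i\mathbb{Z}$, and adjusting them simultaneously for all weights is not automatic. The paper avoids this by working with the Zariski closure of the cyclic group generated by $D(\lambda)$: since $\lambda$ is not a root of unity this closure is the full one-parameter torus $\{D(x):x\in\Cb^\times\}$, which is connected and hence lies in $\rho(G)$, and one then reads off an element of $d\rho(\mathfrak{g})$ with exact spectrum $\mathrm{diag}(1,0,\dots,0,-1)$. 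Second, your multiplicity-one case silently uses $\mu_-\in W\mu_+$; this is true (via the longest element of $W$), and the paper proves it explicitly, but you assert it rather than establish it.

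The genuine gap is the weight-count bound $|\Pi|\le 2\,\mathrm{rk}(G)+1$ (and $n\le 2\,\mathrm{rk}(G)$ in the minuscule case), which you describe only as a plan. Your proposed route --- normalize $h$ to a dominant grading element, use $\tilde\alpha(h)\le 2$ to restrict to a short list of $\mathbb{Z}$-gradings indexed by marked nodes of the extended Dynkin diagram, and then enumerate compatible modules --- would effectively reprove the classification that the paper handles separately in Proposition 4.3, so it is both heavier than needed and not carried out. The paper instead proves the bound with a clean abstract lemma (their Lemma 4.4): if $W$ is a finite group acting irreducibly on a space $V$, $v\ne 0$, and $Wv$ minus two of its elements fails to span $V$, then $|Wv|\le 2\dim V$. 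Applying this to $W\omega^+$, whose complement of $\{\omega^+,\omega^-\}$ vanishes on $X$ and therefore does not span $\mathfrak{h}^*$, gives $|W\omega^+|\le 2\,\mathrm{rk}(G)$ immediately, and the count of weights and the minuscule bound follow. That linear-algebra lemma is the missing ingredient you would need to complete the argument without a case analysis. One more small point: the general bound in the statement is on the number of weights $|\Pi|$, not on $n$; you conflate the two (``$n=2+\dim V_0\le 2\,\mathrm{rk}(G)+1$''), which is harmless here but a stronger claim than required.
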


\begin{proposition}
	\label{Proposition_classi_quasi-minus_and_minus}
	Let $G$ be a connected complex simple Lie group and $\rho:G\to\GL_n(\Cb)$ an irreducible faithful representation. Suppose that $\rho$ is a quasi-minuscule representation with at most $2\mathrm{rk}(G)+1$ weights. If $\rho$ is minuscule, assume further that $n\leq 2\mathrm{rk}(G)$. Then $G$ is either $\SL_n(\Cb)$, $\SO_n(\Cb)$, or, if $n$ is even, $\mathrm{Sp}_{n}(\Cb)$. Furthermore, there is an automorphism $\phi\in\mathrm{Aut}(G)$ such that $\rho\circ\phi$ is conjugate to the defining representation.
\end{proposition}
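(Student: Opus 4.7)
The plan is to invoke the classification of irreducible quasi-minuscule representations of complex simple Lie algebras, then apply the numerical hypotheses to cut the resulting list down to the claimed short list. Recall that a nontrivial irreducible representation of a complex simple Lie algebra is minuscule precisely when all its weights lie in a single Weyl orbit (each with multiplicity one); it is quasi-minuscule but not minuscule precisely when its highest weight is the (unique) highest short root, in which case the set of distinct weights consists of the Weyl orbit of short roots together with zero. Consequently the number of distinct weights of a minuscule rep equals its dimension, while in the quasi-minuscule non-minuscule case it equals $|W \cdot \alpha_{\mathrm{short}}| + 1$.

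Enumerating type by type and imposing the hypothesis ``at most $2m+1$ distinct weights'' (together with $n \leq 2m$ in the minuscule case), where $m = \mathrm{rk}(G)$, I would show that the surviving candidates at the level of simply-connected Lie groups $G_0$ are:
\begin{itemize}
\item type $A_m$ ($m \geq 1$): the defining and dual defining reps of $\mathrm{SL}_{m+1}(\mathbb{C})$, together with the minuscule $\wedge^2(\mathbb{C}^4)$ of $\mathrm{SL}_4(\mathbb{C})$ (the unique additional case since $\binom{m+1}{k} \leq 2m$ has no other solutions);
\item type $B_m$ ($m \geq 1$): the standard $(2m+1)$-dimensional rep of $\mathrm{Spin}_{2m+1}(\mathbb{C})$, and additionally the spin reps of $\mathrm{Spin}_3(\mathbb{C})$ and $\mathrm{Spin}_5(\mathbb{C})$ (since $2^m \leq 2m$ forces $m \leq 2$);
\item type $C_m$ ($m \geq 1$): the defining $2m$-dimensional rep of $\mathrm{Sp}_{2m}(\mathbb{C})$, and additionally the short-root rep $\wedge^2_0(\mathbb{C}^4)$ of $\mathrm{Sp}_4(\mathbb{C})$ (since $2m(m-1)+1 \leq 2m+1$ forces $m \leq 2$);
\item type $D_m$ ($m \geq 4$): the standard $2m$-dimensional rep of $\mathrm{Spin}_{2m}(\mathbb{C})$, and the two half-spin reps of $\mathrm{Spin}_8(\mathbb{C})$ (since $2^{m-1} \leq 2m$ forces $m = 4$);
\item no candidates in the exceptional types $E_6, E_7, E_8, F_4, G_2$, where each minuscule or short-root dimension strictly exceeds $2m+1$.
\end{itemize}

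Finally, faithfulness forces $G$ to be the quotient of $G_0$ by the (central) kernel of the candidate representation. The following (exceptional) isogenies then collapse every surviving case to the defining representation of $\mathrm{SL}_n(\mathbb{C})$, $\mathrm{SO}_n(\mathbb{C})$, or $\mathrm{Sp}_n(\mathbb{C})$ after composition with a suitable automorphism of $G$: the outer automorphism $A \mapsto (A^T)^{-1}$ of $\mathrm{SL}_n(\mathbb{C})$ exchanges the defining and dual defining reps; $\mathrm{SL}_4(\mathbb{C})/\{\pm I\} \cong \mathrm{SO}_6(\mathbb{C})$ turns $\wedge^2(\mathbb{C}^4)$ into the defining $\mathrm{SO}_6$-rep; $\mathrm{SL}_2(\mathbb{C}) \cong \mathrm{Spin}_3(\mathbb{C})$ and $\mathrm{Sp}_4(\mathbb{C}) \cong \mathrm{Spin}_5(\mathbb{C})$ turn the exceptional spin reps into the defining reps of $\mathrm{SL}_2$ and $\mathrm{Sp}_4$ respectively; $\mathrm{Sp}_4(\mathbb{C})/\{\pm I\} \cong \mathrm{SO}_5(\mathbb{C})$ turns the short-root rep of $\mathrm{Sp}_4$ into the defining $\mathrm{SO}_5$-rep; the covers $\mathrm{Spin}_{2m+1}(\mathbb{C}) \twoheadrightarrow \mathrm{SO}_{2m+1}(\mathbb{C})$ and $\mathrm{Spin}_{2m}(\mathbb{C}) \twoheadrightarrow \mathrm{SO}_{2m}(\mathbb{C})$ turn the standard spin-group reps into the defining $\mathrm{SO}$-reps; and triality on $D_4$ permutes the three $8$-dimensional reps of $\mathrm{Spin}_8(\mathbb{C})$, so the half-spin candidates coincide with the standard rep after an outer automorphism. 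I expect the main obstacle to be the careful bookkeeping around these isogenies and the $D_4$ triality, as opposed to the enumeration itself, which is routine.
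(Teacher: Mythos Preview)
Your proposal is correct and follows essentially the same strategy as the paper: both invoke the classification of (quasi-)minuscule representations, run through the Dynkin types imposing the numerical bounds, and then identify each surviving candidate with a defining representation via the low-rank exceptional isogenies and outer automorphisms (including triality for $D_4$). Your treatment is slightly more explicit about the passage from the simply-connected cover to the faithful quotient, while the paper works more directly at the Lie-algebra level; one small imprecision is that for the exceptional types you should bound the number of \emph{weights} rather than the dimension, though in those cases both quantities exceed $2m+1$ anyway.
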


Note that, conversely, the defining representations of $\SL_n(\Cb)$, $\SO_n(\Cb)$, and $\mathrm{Sp}_{2m}(\Cb)$ always contain a matrix of the form $D(\lambda)$, where $\lambda$ is not a root of unity.
We proceed to the proof of Proposition \ref{Proposition_Quasiminusculerepresentations}, for which we need the following lemma.

\begin{lemma}
\label{lemmadimensionfinitegroupaction}
Let $W$ be a finite group and $V$ an irreducible linear representation of $W$. Suppose there exist $v\in V\setminus\{0\}$ and $v'\in Wv$ such that $Wv\setminus\{v,v'\}$ does not span $V$. Then $$|Wv|\leq 2 \dim (V).$$
\end{lemma}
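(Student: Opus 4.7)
My plan is to exploit the hypothesis by extracting a hyperplane that nearly contains the orbit, and then use the $W$-action together with irreducibility to produce a family of such hyperplanes whose common intersection is trivial. Set $n=\dim V$ and let $O := Wv$. The assumption that $Wv \setminus \{v,v'\}$ fails to span $V$ places this set inside some hyperplane $H$, so $|O \setminus H| \leq 2$. Because $O$ is $W$-invariant, the identity $O \setminus gH = g(O\setminus H)$ shows that for every $g \in W$ the translated hyperplane $gH$ also misses at most two points of $O$.

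Next, I would consider the subspace $K := \bigcap_{g \in W} gH$. It is $W$-invariant and contained in $H$, so it is a proper subspace of $V$; irreducibility then forces $K = \{0\}$. Only finitely many distinct hyperplanes occur among the $gH$, so I can select a subfamily $H_{i_1},\dots,H_{i_\ell}$ whose intersection is already trivial with $\ell \leq n$. A greedy construction works: pick any $H_{i_1}$, and at each subsequent step adjoin some $gH$ that does not contain the running intersection. Such a $gH$ always exists as long as the running intersection is nonzero (otherwise every $gH$ would contain it, contradicting $K=\{0\}$), and each adjunction drops the dimension by at least one, so starting from $\dim H_{i_1}=n-1$ the process terminates after at most $n$ choices.

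To finish, observe that $v\neq 0$ implies $0 \notin O$, so each $u \in O$ must lie outside at least one of the chosen hyperplanes. Hence $O = \bigcup_{j=1}^\ell (O \setminus H_{i_j})$, and
\[
|O| \;\leq\; \sum_{j=1}^\ell |O \setminus H_{i_j}| \;\leq\; 2\ell \;\leq\; 2n,
\]
which is the desired bound. I do not anticipate a serious obstacle in executing this plan; the only point requiring care is the passage from the full intersection $K=\{0\}$ to a subfamily of size at most $n$ with the same property, which is the standard greedy/dimension argument outlined above. The edge case $v=v'$ is handled automatically, since it only strengthens the initial inequality to $|O\setminus H|\leq 1$.
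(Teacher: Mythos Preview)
Your proof is correct and follows essentially the same approach as the paper's, just phrased dually: the paper works with a linear form $l$ whose kernel is your hyperplane $H$, invokes irreducibility of the dual representation to conclude that $Wl$ spans $V^*$, and then selects $n$ translates $g_1 l,\dots,g_n l$ forming a basis (equivalently, $n$ translated hyperplanes with trivial intersection). Your greedy selection of $\leq n$ hyperplanes with trivial intersection and the final counting step are the same argument in hyperplane language.
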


\begin{proof}
Let $n$ be the dimension of $V$. Assume that $Wv\setminus\{v,v'\}$ is contained in the kernel of a nonzero linear form $l\in V^*$. Consider the dual representation of $W$ on $V^*$. Since the latter is irreducible, $Wl$ spans $V^*$. Hence, there exist $g_1,\dots,g_n\in W$ such that $(g_1l,\dots,g_nl)$ is a basis of $V^*$. Let $1\leq i\leq n$. Then every vector in $Wv$, except possibly $g_iv$ and $g_iv'$, lies in $\ker(g_il)$. Hence $|\ker(g_il) \cap Wv|\geq |Wv|-2$. It follows that $0=|\cap_i\ker(g_il) \cap Wv|\geq |Wv|-2n$.
\end{proof}


\begin{proof}[Proof of Proposition~\ref{Proposition_Quasiminusculerepresentations}]
First, note that $\rho(G)$ is the identity component of its own Zariski-closure (as follows for instance from \cite[Lemma 7.9]{Borel_Linearalgebraicgroups}).
Since $\rho(G)$ contains $D({\lambda})$, we have that $\rho(G)$ contains the infinite cyclic subgroup generated by $D({\lambda})$. The latter subgroup has Zariski-closure $\{D(x) |\ x\in\Cb^{\times}\}$, which is connected and hence lies in $\rho(G)$. Let $\rho:\mathfrak{g}\to\mathfrak{gl}_n (\Cb)$ be the representation induced by $\rho$ on the level of Lie algebras. Then $\rho(\mathfrak{g})$ contains an element of the form $\textrm{diag}(1,0,\dots,0,-1)$. Denote by $X$ a preimage of this element under $\rho$.

By \cite[Theorem 3 in Chapter 1, \S 3]{Bourbaki_LiegroupsChapter13}, we have that $X$ is a semisimple element of $\mathfrak{g}$. Hence $X$ is contained in a Cartan subalgebra $\mathfrak{h}$ of $\mathfrak{g}$ (see, for instance, \cite[Proposition 10 and Corollary 1 in  Chapter 10, \S3]{Bourbaki_LiegroupsandLiealgebras79}). Denote by $\Phi$ the set of roots of $\mathfrak{g}$ with respect to $\mathfrak{h}$ and by~$W$ the corresponding Weyl group. Pick a Weyl chamber in $\mathfrak{h}$ whose closure contains~$X$. We choose the set of roots which are positive on this Weyl chamber as our set of positive roots.

The action of $\mathfrak{h}$ on $\Cb^n$ induces the decomposition $$\Cb^n=\bigoplus_{\omega\in\Pi} V_{\omega},$$ where $\Pi\subset\mathfrak{h}^*$ is the set of weights of $\rho$ and $V_{\omega}$ is the weight space associated to $\omega$. We have $\omega(X) = 0$ for all weights $\omega \in \Pi$ except for precisely two weights $\omega^+,\omega^-\in \Pi$ satisfying $\omega^+(X)=1$ and $\omega^-(X)=-1$. Since $\alpha(X)\geq 0$ for all positive roots $\alpha$, we have that $\omega^+$ must be the highest weight of $\rho$ and $\omega^-$ the lowest weight.
Recall that $\Pi$ is preserved under $W$ (see \cite[Theorem in \S21.2]{Humphreys_IntroductiontoLiealgebras}). An element of $ W$ mapping $\Phi$ to $-\Phi$ reverses the order on $\Pi$ and hence sends $\omega^+$ to $\omega^-$. We conclude that $\omega^-$ is in the $W$-orbit of $\omega^+$.

Now suppose $\theta\in\Pi$ is not in the $W$-orbit of $\omega^+$. By the previous paragraph, we have that~$W\theta$ contains neither $\omega^+$ nor $\omega^-$. Thus, any element of $W\theta$ vanishes on $X$, so in particular~$W\theta$ does not span $\mathfrak{h}^*$. Since $W$ acts irreducibly on $\mathfrak{h}^*$ (see \cite[Lemma B in \S 10.4]{Humphreys_IntroductiontoLiealgebras}\footnote{This lemma actually states that $W$ acts irreducibly on the real span of the roots, but the proof indeed shows that $W$ acts irreducibly on $\mathfrak{h}^*$.}), we conclude that $W\theta$ must be $\{0\}$. Hence $\rho$ is quasi-minuscule.

Since $W\omega^+\setminus\{\omega^+,\omega^-\}$ does not span $\mathfrak{h}^*$, Lemma \ref{lemmadimensionfinitegroupaction} shows that $|W\omega^+|\leq 2\dim(\mathfrak{h}^*)=2\textrm{rk}(G)$. It follows that $\rho$ has at most $2\textrm{rk}(G)+1$ weights. Now assume that $\rho$ is minuscule. Since each weight space has dimension 1 (see \cite[Proposition 7 in Chapter 8, \S7]{Bourbaki_LiegroupsandLiealgebras79}), $|\Pi|=n$. Hence $n=|\Pi| = |W\omega^+|\leq 2\dim(\mathfrak{h}^*)=2\textrm{rk}(G)$.
\end{proof}

%

\begin{proof}[Proof of Proposition~\ref{Proposition_classi_quasi-minus_and_minus}]
Assume first that $\rho$ is minuscule and $n\leq 2\textrm{rk}(G)$. Minuscule representations are classified in \cite[Chapter 8, \S7.3]{Bourbaki_LiegroupsandLiealgebras79} and one can find their dimensions in \cite[Table 2 in Chapter 8]{Bourbaki_LiegroupsandLiealgebras79}.

The only minuscule representations of $\mathfrak{sl}_{l+1}(\Cb)$ of dimension $\leq 2l$ are the defining representation, the dual defining representation, or, if $l=3$, the fundamental representation associated to the middle root. The latter is the defining representation of $\mathfrak{so}_6(\Cb)$ \cite[Part 3, \S19.1]{Fulton_RepresentationTheory}. For $\mathfrak{so}_{2l+1}(\Cb)$, $l\geq 2$, the unique minuscule representation has low enough dimension if and only if $l=2$. In the latter case, we obtain the defining representation of $\mathfrak{sp}_4(\Cb)$ \cite[Part 3, \S244]{Fulton_RepresentationTheory}. The unique minuscule representation of $\mathfrak{sp}_{2l}(\Cb)$ is the defining representation. For $\mathfrak{so}_{2l}(\Cb)$, $l\geq 3$, the minuscule representations are precisely the defining representation and the spinor representations. The spinor representations have dimension $2^{l-1}$; the latter is smaller than $2l$ if and only if $l=3$ or $l=4$. If $l=3$, the spinor representations are the defining representation of $\mathfrak{sl}_4(\Cb)$ and its dual. If $l=4$, the spinor representations are the composition of the defining representation of $\mathfrak{so}_8 (\Cb)$ with a triality automorphism \cite[Part 3, \S20.3]{Fulton_RepresentationTheory}. The minuscule representations of $\mathfrak{e}_6$ and $\mathfrak{e}_7$ are of dimension strictly larger than twice the rank. Finally, the exceptional Lie algebras $\mathfrak{g}_2$, $\mathfrak{f}_4$, and $\mathfrak{e}_8$ have no minuscule representations.

Now assume that $\rho$ is quasi-minuscule but not minuscule, so that $0\in\Pi$. Then there is a root $\alpha\in\Pi$; see \cite[Proposition 5 ii) in Chapter 8, \S7]{Bourbaki_LiegroupsandLiealgebras79}. Since the action of $W$ on the set of roots of the same length as $\alpha$ is transitive \cite[Lemma B in \S10.4]{Humphreys_IntroductiontoLiealgebras}, we have that~$\Pi$ contains all such roots. If all roots have the same length, then $\Pi=\Phi\cup\{0\}$. Otherwise $\alpha$ must be a short root. Indeed, if $\alpha$ were a long root, then $\Pi$ would also contain a short root. However, short and long roots are never in the same orbit for the Weyl group, since the latter acts isometrically. Hence $\Pi$ is the set $\Phi_{\mathrm{short}}$ of short roots together with $0$.

The description of the root systems is given in \cite[Plate 1-9]{Bourbaki_LiegroupsandLiealgebras46}. If all roots of $\mathfrak{g}$ have the same length then one can check that $|\Phi|+1 > 2\textrm{rk}(G)+1$ unless $\mathfrak{g}\simeq \mathfrak{sl}_2(\mathbb{C})$. In the latter case, $\rho$ is the adjoint representation which is the defining representation of $\mathfrak{so}_3(\mathbb{C})$.
If $\mathfrak{g}$ has short and long roots then one can check that $|\Phi_{\mathrm{short}}| \leqslant 2 \textrm{rk} (G)$ if and only if $\mathfrak{g} = \mathfrak{so}_{2l+1}(\Cb)$ for some $l \geqslant 1$. The only quasi-minuscule representation of $\mathfrak{so}_{2l+1}(\Cb)$ with $\Pi = \Phi_{\mathrm{short}} \cup \{0 \}$ is the defining representation \cite[Part 3, \S19.4]{Fulton_RepresentationTheory}.
\end{proof}

\begin{proof}[Proof of Theorem \ref{thm:rep_containing_d_lambda}]
First, $\rho(G)\subset\SL_n(\Cb)$ since the one-dimensional representation $\det\circ\rho:G\to\mathbb{C}^{\times}$ must be trivial. Define $$H:=\left\{\begin{array}{ll}
\SL_n(\Cb) & \mbox{if $\rho(G)$ does not preserve any nondegenerate bilinear form}\\
\SO_n(B) & \mbox{if $\rho(G)$ preserves a nondegenerate symmetric bilinear form $B$}\\
\Sp_n(B) & \mbox{if $\rho(G)$ preserves a nondegenerate antisymmetric bilinear form $B$.}
\end{array}\right.
$$
We want to show that $\rho(G)=H$. Suppose otherwise. Then $\rho(G)$ is contained in a maximal proper connected Lie subgroup $H_0$ of $H$. Since $H_0$ contains $\rho(G)$, we have that $H_0$ acts irreducibly on $\mathbb{C}^n$ and contains a matrix of of the form $D(\lambda)$, where $\lambda$ is not a root of unity.

Suppose that $H_0$ is simple. Proposition~\ref{Proposition_Quasiminusculerepresentations} and Proposition~\ref{Proposition_classi_quasi-minus_and_minus} show that $H_0$ is a classical group and $\rho$ is conjugate to the defining representation of $H_0$ up to an automorphism. In particular $H_0=H$, which contradicts the definition of $H_0$.

Suppose that $H_0$ is not simple. Then one concludes from \cite[Theorem 1.3 and Theorem 1.4]{Dynkin_Maximalsubgroups} that $H_0$ is, up to conjugation, contained in $\SL_{n_1}(\Cb)\otimes\SL_{n_2}(\Cb)$ for some $n_i\geq2$ satisfying $n_1n_2=n$. Hence there exist $A\in\SL_{n_1}(\Cb)$ and $B\in\SL_{n_2}(\Cb)$ such that $A\otimes B$ has eigenvalues $\lambda,1,\dots,1,\lambda^{-1}$. Denote by $\xi_1,\dots,\xi_{n_1}$ the eigenvalues of $A$ and by $\mu_1,\dots,\mu_{n_2}$ those of $B$. Then the eigenvalues of $A\otimes B$ are $(\xi_i \mu_j)_{i,j}$. We can assume that $\xi_1\mu_1=\lambda$. If $n_1\geq 3$, there is an $i$ such that $\xi_i\mu_j=1$ for all $j$. This shows that $\mu_j=\mu_{j'}$ for all $j$ and $j'$. We conclude that $\xi_1\mu_j=\lambda$ for all $j$, a contradiction. The same argument applies if $n_2\geq 3$. Hence, $n_1=n_2=2$. In this case, we have by \cite[Theorem 1.3 and Theorem 1.4]{Dynkin_Maximalsubgroups} that $H_0$ is, up to conjugation, equal to $\SL_2(\Cb)\otimes\SL_2(\Cb)$. The latter group preserves the symmetric bilinear form $\omega\otimes\omega$, where~$\omega$ is a nondegenerate antisymmetric bilinear form on $\Cb^2$. In particular, $H_0$ is conjugate to $\SO_4(\Cb)$. This concludes the proof.
\end{proof}

\section{Proof of Theorem \ref{thm:coxeter}}\label{sec:proof-of-main}

To prove Theorem \ref{thm:coxeter}, we will need some preliminaries on Zariski-closures of absolutely irreducible representations.

\subsection{Semisimplicity of the Zariski-closure}

\begin{lemma}\label{lemma_semisimple}
	Let $\Gamma<\SL^{\pm}(V)$ be a subgroup such that the action of any finite-index subgroup of $\Gamma$ on $V$ is absolutely irreducible. Then the Zariski-closure of $\Gamma$ in $\SL^{\pm}(V_\Cb)$ is semisimple.
\end{lemma}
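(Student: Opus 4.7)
The plan is to show that the identity component $G^0$ of the Zariski-closure $G$ is a connected semisimple complex algebraic group, by killing the unipotent radical and the connected center using absolute irreducibility. The hypothesis on finite-index subgroups is there precisely to let us pass from $\Gamma$ to the subgroup sitting inside $G^0$.

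First I would set $\Gamma_0 := \Gamma \cap G^0$. Since $G^0$ is normal in $G$ of finite index and $\Gamma \subseteq G$, the group $\Gamma_0$ is a finite-index normal subgroup of $\Gamma$. By hypothesis, $\Gamma_0$ acts absolutely irreducibly on $V_{\mathbb{C}}$. The Zariski-closure of $\Gamma_0$ is contained in $G^0$, and absolute irreducibility is preserved under Zariski-closure (since the locus of matrices preserving a given complex subspace is Zariski-closed). Hence $G^0$ itself acts absolutely irreducibly on $V_{\mathbb{C}}$.

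Next I would kill the unipotent radical. Let $N := R_u(G^0)$, the unipotent radical of $G^0$. Since $N$ is a normal subgroup of $G^0$, the fixed subspace $V_{\mathbb{C}}^{N}$ is $G^0$-invariant. Moreover, $N$ being unipotent, a standard Lie--Kolchin/Engel argument shows $V_{\mathbb{C}}^N \neq 0$. Absolute irreducibility of $G^0$ then forces $V_{\mathbb{C}}^N = V_{\mathbb{C}}$, so $N$ acts trivially on $V_{\mathbb{C}}$; but $G^0 \subseteq \mathrm{SL}^\pm(V_{\mathbb{C}})$ acts faithfully, so $N = \{1\}$. Therefore $G^0$ is reductive.

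Finally I would kill the connected center. Let $Z := Z(G^0)^0$, a complex torus. Every element of $Z$ commutes with $G^0$, which acts absolutely irreducibly on $V_{\mathbb{C}}$, so by Schur's lemma $Z$ consists of scalar matrices $\lambda \cdot \mathrm{Id}$. But any such scalar matrix in $\mathrm{SL}^\pm(V_{\mathbb{C}})$ satisfies $\lambda^{\dim V} = \pm 1$, hence there are only finitely many such scalars; the connected group $Z$ must therefore be trivial. Combined with the previous paragraph, this shows $G^0$ is a connected semisimple complex algebraic group, so $G$ is semisimple in the sense that its radical is trivial.

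There is no real obstacle here; the only subtlety is the bookkeeping in the first step, where one must remember that $\Gamma$ need not be Zariski-dense in $G^0$ (indeed $\Gamma/\Gamma_0$ injects into the finite group $G/G^0$), and this is exactly why the hypothesis is phrased in terms of finite-index subgroups rather than just $\Gamma$ itself.
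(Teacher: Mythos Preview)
Your proof is correct and follows essentially the same approach as the paper's: pass to the identity component via the finite-index hypothesis, use irreducibility to force reductivity, and use the determinant constraint to kill the connected center. The paper compresses your unipotent-radical and Schur's-lemma arguments into a citation of Bourbaki (``irreducible $\Rightarrow$ reductive with radical equal to the center'') and then observes the center is finite, concluding at the Lie algebra level; you spell these steps out explicitly at the group level, but the content is the same.
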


Here, we consider $\SL^{\pm}(V_\Cb)$ as a complex algebraic group.

\begin{proof}
	Denote by $G$ the identity component, for the Euclidean topology, of the Zariski-closure of $\Gamma$ in $\SL^{\pm}(V_\Cb)$, and by $\mathfrak{g}$ the Lie algebra of $G$. Since $G$ contains a finite-index subgroup of~$\Gamma$, the action of $G$ (and hence of $\mathfrak{g}$) on $V_{\Cb}$ is irreducible. It follows that $G$ is reductive with radical the center of $G$ \cite[Proposition 5 in Chapter 1, \S 6]{Bourbaki_LiegroupsChapter13}. Since $G \subset \SL^{\pm}(V_\Cb)$, the center of $G$ is finite, and hence $\mathfrak{g}$ has no center. We conclude that $\mathfrak{g}$ is semisimple \cite[Theorem 1 in Chapter 1, \S 6]{Bourbaki_LiegroupsChapter13}.
\end{proof}

\subsection{Invariant bilinear forms on the complexification of $V$}

We will need a strengthening of Theorem~\ref{thm:tits-vinberg2}.\eqref{item:bilinear_form} in the complex case. This is the objective of this section.

\begin{lemma}\label{lemma_invariantbilinearform}
	Let $\Gamma<\GL (V)$ be a subgroup generated by reflections such that the action of $\Gamma$ on $V$ is absolutely irreducible. Every nonzero bilinear form on $V_{\mathbb{C}}$ which is preserved by $\Gamma$ is nondegenerate and symmetric.
\end{lemma}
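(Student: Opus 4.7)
The plan is to handle nondegeneracy and symmetry separately, in both cases exploiting absolute irreducibility of the $\Gamma$-action on $V_{\mathbb{C}}$.

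First, for nondegeneracy, I would observe that the left radical $L = \{x \in V_{\mathbb{C}} : B(x, y) = 0 \text{ for all } y \in V_{\mathbb{C}}\}$ is a $\Gamma$-invariant $\mathbb{C}$-subspace of $V_{\mathbb{C}}$, since $B$ is $\Gamma$-invariant. Absolute irreducibility means $\Gamma$ acts irreducibly on $V_{\mathbb{C}}$, so $L \in \{0, V_{\mathbb{C}}\}$, and $B \neq 0$ forces $L = 0$. The analogous argument for the right radical yields nondegeneracy of $B$.

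For symmetry, I would decompose $B = B^{+} + B^{-}$ into its symmetric and antisymmetric parts, both of which are $\Gamma$-invariant. Suppose for contradiction that $B^{-} \neq 0$; the previous paragraph applied to $B^{-}$ shows it is nondegenerate. The key step is to show that for every reflection $\sigma = \mathrm{Id} - \alpha \otimes v$ in $\Gamma$, the vector $v$ lies in the radical of $B^{-}$. Expanding the identity $B^{-}(\sigma x, \sigma y) = B^{-}(x, y)$ yields
\[
\alpha(x)\, B^{-}(v, y) + \alpha(y)\, B^{-}(x, v) = \alpha(x)\alpha(y)\, B^{-}(v, v),
\]
whose right-hand side vanishes by antisymmetry of $B^{-}$. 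Setting $x = v$ (and using $\alpha(v) = 2$) gives $B^{-}(v, y) = 0$ whenever $\alpha(y) = 0$; combined with $B^{-}(v, v) = 0$ and the direct-sum decomposition $V_{\mathbb{C}} = \ker(\alpha) \oplus \mathbb{C} v$, this forces $B^{-}(v, \cdot) \equiv 0$, and analogously $B^{-}(\cdot, v) \equiv 0$.

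Finally, I would argue that the $\mathbb{R}$-subspace $R \subset V$ spanned by the reflection vectors of all reflections in $\Gamma$ equals $V$. Conjugation of a reflection $\mathrm{Id} - \alpha \otimes v$ by $\gamma \in \Gamma$ is the reflection $\mathrm{Id} - (\alpha \circ \gamma^{-1}) \otimes \gamma(v)$, so $R$ is $\Gamma$-invariant. Since $\Gamma$ is generated by reflections, $R \neq 0$; hence $R = V$ by irreducibility (a consequence of absolute irreducibility). Complexifying, $R_{\mathbb{C}} = V_{\mathbb{C}}$, and the previous paragraph places every element of $V_{\mathbb{C}}$ in the radical of $B^{-}$, forcing $B^{-} = 0$ and contradicting nondegeneracy. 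Thus $B = B^{+}$ is symmetric. The only delicate point is carefully bookkeeping the reflection-invariance identity to extract the radical membership; once that is in hand, everything else is immediate from absolute irreducibility.
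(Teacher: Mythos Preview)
Your proof is correct and follows essentially the same approach as the paper: nondegeneracy via invariance of the radical, then the symmetric/antisymmetric decomposition, with the key computation showing that each reflection vector $v$ lies in the radical of $B^{-}$. The paper is slightly more economical in two spots: setting $x=v$ in your identity already gives $B^{-}(v,y)=0$ for \emph{all} $y$ (not just $y\in\ker\alpha$), and once a single nonzero $v$ lies in the $\Gamma$-invariant radical of $B^{-}$, irreducibility forces $B^{-}=0$ immediately, so the argument about the span $R$ of all reflection vectors is unnecessary.
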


\begin{proof}
	Let $B$ be such a form. Let $x\in\ker(B)$. For all $y\in V_{\mathbb{C}}$ and $g\in\Gamma$, we have $B(g x,y)=B(x,g^{-1}y)=0.$ Hence $\ker(B)$ is $\Gamma$-invariant. Since $B$ is nonzero, it is nondegenerate.
	
	One can write $B$ uniquely as a sum $B=B_s+B_a$ where $B_s$ is symmetric and $B_a$ is antisymmetric. Since $B$ is $\Gamma$-invariant, $B_s$ and $B_a$ are $\Gamma$-invariant. Let $\sigma\in\Gamma$ be a reflection. Write $\sigma = \Id -\alpha\otimes v$ for $\alpha\in V_{\mathbb{C}}^*$ and $v\in V_\mathbb{C}$ such that $\alpha(v)=2$. For all $x,y\in V_{\mathbb{C}}$,
	\begin{align*}
		&B_a(x,y)=B_a(\sigma(x),\sigma(y))\\
		\Leftrightarrow\ &B_a(x,y)=B_a(x,y)-\alpha(x)B_a(v,y)-\alpha(y)B_a(x,v)+\alpha(x)\alpha(y)B_a(v,v)\\
		\Leftrightarrow\ &\alpha(x)B_a(v,y)=\alpha(y)B_a(v,x).
	\end{align*}
	Taking $y=v$, we obtain that $B_a (v,x) = 0$ for every $x \in V_\Cb$. Hence, $v$ lies in $\ker (B_a)$ which, by the previous paragraph, shows that $B_a=0$.
\end{proof}

\begin{lemma}
	\label{lemmainvriantbilinearformsandfiniteindex}
	Let $\Gamma<\GL (V)$ be a subgroup generated by reflections such that the action of any finite-index subgroup of $\Gamma$ on $V$ is absolutely irreducible. Suppose that there exists a finite-index subgroup $\Gamma_0$ of $\rho (W)$ that preserves a nonzero $\Cb$-bilinear form on $V_\Cb$. Then $\rho(W)$ preserves a nonzero symmetric $\mathbb{R}$-bilinear form on $V$.
\end{lemma}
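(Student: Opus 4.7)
The plan is to combine Schur's lemma for absolutely irreducible representations with a character-theoretic argument that exploits the fact that $\Gamma$ is generated by reflections, and then to descend the resulting invariant form from $V_\Cb$ to $V$ via a complex-conjugation averaging.

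First I would replace $\Gamma_0$ by the intersection of its $\Gamma$-conjugates, so that without loss of generality $\Gamma_0$ is normal of finite index in $\Gamma$; this intersection still acts absolutely irreducibly on $V_\Cb$ by hypothesis and still preserves $B$ as a subgroup of the original $\Gamma_0$. Since $\ker B$ is $\Gamma_0$-invariant, absolute irreducibility of $\Gamma_0$ forces $B$ to be nondegenerate. Applying Schur's lemma to the absolutely irreducible $\Gamma_0$-modules $V_\Cb$ and $V_\Cb^*$, the space of $\Gamma_0$-invariant $\Cb$-bilinear forms on $V_\Cb$ is one-dimensional, spanned by $B$. Consequently, for every $\gamma \in \Gamma$ the form $B_\gamma(x,y) := B(\gamma x, \gamma y)$, which is $\Gamma_0$-invariant by normality, is a scalar multiple $c(\gamma) B$, defining a character $c : \Gamma \to \Cb^\times$ with $c|_{\Gamma_0} \equiv 1$.

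The key step is to show $c \equiv 1$. For any reflection $\sigma = \Id - \alpha \otimes v$ in $\Gamma$ one has $c(\sigma)^2 = 1$, and testing the identity $B_\sigma = c(\sigma) B$ on pairs $(x,y)$ from the fixed hyperplane $\ker \alpha$ yields $(1 - c(\sigma))B(x,y) = 0$; if $c(\sigma) = -1$, then the codimension-one subspace $\ker \alpha$ would be totally $B$-isotropic, which is impossible for a nondegenerate form (symmetric or antisymmetric) as soon as $\dim V \geq 3$. The cases $\dim V \leq 2$ are handled separately: in dimension $1$ one has $\Gamma \subset \{\pm \Id\}$ trivially, and in dimension $2$ every compatible $2 \times 2$ Cartan matrix is symmetrizable, so Theorem~\ref{thm:tits-vinberg2}.\eqref{item:bilinear_form} already produces a real symmetric invariant form. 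Since $\Gamma$ is generated by reflections, this gives $c \equiv 1$, hence $\Gamma$ preserves $B$, and Lemma~\ref{lemma_invariantbilinearform} forces $B$ to be symmetric.

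Finally, to descend $B$ from $V_\Cb$ to $V$, I would use that the one-dimensional space of $\Gamma$-invariant $\Cb$-bilinear forms on $V_\Cb$ is stable under complex conjugation $B \mapsto \overline{B}$, where $\overline{B}(x,y) := \overline{B(\bar x, \bar y)}$ is defined relative to the real structure $V \subset V_\Cb$. Thus $\overline{B} = \mu B$ for some $\mu \in \Cb^\times$, and applying conjugation twice gives $|\mu| = 1$; writing $\mu = e^{i\theta}$, the rescaled form $B' := e^{i\theta/2} B$ satisfies $\overline{B'} = B'$, so $B'|_V$ is a nonzero $\Gamma$-invariant real symmetric bilinear form on $V$. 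The main obstacle will be the triviality step for $c$: the delicate point is ruling out $c(\sigma) = -1$ for some reflection $\sigma$, which would impose a totally isotropic hyperplane for the nondegenerate form $B$ and thereby sharply constrain $\dim V$.
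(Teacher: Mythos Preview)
Your proof is correct and follows essentially the same approach as the paper: normalize $\Gamma_0$, use Schur to define the character $c$, show $c$ is trivial on reflections via the totally-isotropic-hyperplane argument (with the $\dim V = 2$ case handled by symmetrizability of $2\times 2$ Cartan matrices), apply Lemma~\ref{lemma_invariantbilinearform}, and then descend to $V$ by a conjugation argument. The only differences are cosmetic: you phrase the hyperplane obstruction as the dimension bound $\dim W \leq n/2$ for a totally isotropic subspace, whereas the paper argues directly that $x^\perp = \ker\alpha = y^\perp$ forces $x,y$ colinear; and you rescale by $e^{i\theta/2}$ where the paper simply takes $B+\overline{B}$ restricted to $V$, or $iB$ if that vanishes.
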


\begin{proof}
	Denote by $B$ a nonzero $\Cb$-bilinear form on $V_{\Cb}$ preserved by $\Gamma_0$. Up to diminishing~$\Gamma_0$, we may assume that $\Gamma_0$ is normal in $\Gamma:= \rho (W)$. By assumption, we have that $\Gamma_0$ acts irreducibly on $V_{\Cb}$.
	
	Since the kernel of $B$ is invariant under $\Gamma_0$, we have that $B$ is nondegenerate. Since $\Gamma$ normalizes $\Gamma_0$, for all $g\in \Gamma_0$, $\gamma\in\Gamma$ and $x,y\in V_{\Cb}$, we have
	$$B(\gamma g x,\gamma g y)=B(\gamma x,\gamma y).$$ Hence for all $\gamma\in\Gamma$, we have that $\Gamma_0$ preserves the bilinear form $(x,y)\mapsto B(\gamma x,\gamma y)$. Since the action of $\Gamma_0$ on $V_{\Cb}$ is irreducible, all $\Gamma_0$-invariant bilinear forms on $V_{\Cb}$ are scalar multiples of one another. We deduce that for each $\gamma\in\Gamma$, there exists a nonzero $c_{\gamma}\in\Cb$ such that $$B(\gamma\cdot,\gamma\cdot)=c_{\gamma}B(\cdot,\cdot).$$ The map $c:\Gamma\to\Cb^{*},\ \gamma\mapsto c_{\gamma}$ is a group homomorphism.
	
	We claim that $c$ is trivial. Otherwise, there exists a reflection $\sigma\in\Gamma$ such that $c_{\sigma}\neq 1$. Write $\sigma=\mathrm{Id}-\alpha\otimes v$ for $\alpha\in V_{\Cb}$ and $v\in V_{\Cb}^*$ satisfying $\alpha(v)=2$. For all $x,y\in\ker(\alpha)$, $$c_{\sigma}B(x,y)=B(\sigma(x),\sigma(y))=B(x,y),$$ which implies that $B(x,y)=0$. In particular, if $x$ and $y$ are nonzero, we have $x^{\perp}=\ker(\alpha)=y^{\perp}$ which shows that $x$ and $y$ are colinear. Hence $\dim(\ker(\alpha))=1$, so that $V$ has dimension~$2$.
	However, in the latter case, the Cartan matrix of $\rho$ is always symmetrizable, so that $\Gamma$ preserves a nonzero bilinear form on $V$. By irreducibility, this form must be a multiple of $B$. In any case, we obtain that $c$ is trivial, so that $\Gamma$ preserves $B$.


	By Lemma \ref{lemma_invariantbilinearform}, $B$ must be symmetric. The symmetric $\mathbb{R}$-bilinear form on $V$ given by $(x,y)\mapsto B(x,y)+\overline{B(x,y)}$ is preserved by $\Gamma$. If the latter form is nonzero, the conclusion of the lemma holds. Otherwise, the restriction of $iB$ to $V$ is a nonzero symmetric $\mathbb{R}$-bilinear form preserved by $\Gamma$. 
\end{proof}

\subsection{The proof}

We can now prove Theorem \ref{thm:coxeter}.

\begin{proof}[Proof of Theorem \ref{thm:coxeter}]
Let $W$ be a Coxeter group that is not virtually abelian (in which case~$W$ is large). Let $\rho:W\to\GL(V)$ be an irreducible representation of $W$ as a reflection group; note that irreducibility of $\rho$ implies that $W$ is irreducible as an abstract Coxeter group. Let $G$ be the identity component (for the Euclidean topology) of the Zariski-closure of $\Gamma:=\rho(W)$ in $\SL^{\pm}(V_{\mathbb{C}})$ (where $\SL^{\pm}(V_{\mathbb{C}}$) is considered as a complex algebraic group).

Since the restriction of $\rho$ to any finite-index subgroup of $\Gamma$ is absolutely irreducible (see Theorem \ref{thm:tits-vinberg2}, Item (\ref{item:strongly_irreducible})), we have that $G$ acts irreducibly on $V_{\mathbb{C}}$. Lemma \ref{lemma_semisimple} shows that $G$ is semisimple. 
By Proposition \ref{prop_dlambda}, there exists $\gamma \in \Gamma$ conjugate within $\GL(V)$ to $D({\lambda})$ for some $\lambda > 1$. A positive power of $\gamma$ lies in $G$. We can thus apply Theorem \ref{thm:rep_containing_d_lambda}, which shows that $G$ is conjugate to either $\SL(V_{\Cb})$, $\SO_B(V_{\Cb})$, or $\mathrm{Sp}_B(V_{\Cb})$ for some nondegenerate bilinear form $B$ on~$V_{\Cb}$.

 Denote by $H$ the Zariski-closure of $\Gamma$ in $\SL^{\pm}(V)$ (considered as a real algebraic group). Note that $G$ is the identity component of the complex points of $H$.
Suppose that $\Gamma$ preserves a nonzero symmetric $\mathbb{R}$-bilinear form $f$ on $V$. Then $H$ is contained in $O_f(V)$. By the previous paragraph, $G=\SO_B(V_{\Cb})$, where $B$ is the complexification of $f$. This shows that $\SO_f(V)\subset H$. Since $H$ contains a reflection, we have $H=O_f(V)$ in this case. Now suppose that $\Gamma$ does not preserve any nonzero symmetric $\mathbb{R}$-bilinear form on $V$. Lemma \ref{lemmainvriantbilinearformsandfiniteindex} thus shows that $G \cap \Gamma$ does not preserve any nonzero $\Cb$-bilinear form on $V_{\Cb}$. By the previous paragraph, we have $G=\SL(V_{\Cb})$, so that $\SL(V)\subset H$. Since $H$ contains a reflection, we have $H=\SL^{\pm}(V)$ in this case. This concludes the proof.
\end{proof}

\section{Thin reflection groups}\label{sec:thin-reflection-groups}

In this section, we apply Theorem \ref{thm:coxeter} to construct thin (virtual) embeddings of certain abstract Coxeter groups. We will use the following notion.

\subsection{Cyclic products}

Given a matrix $\mathcal{A}$, a \emph{cyclic product of length $k\geq 2$ in} $\mathcal{A}$ is a product of the form $$\mathcal{A}_{i_1i_2}\mathcal{A}_{i_2i_3}\dots\mathcal{A}_{i_ki_1},$$
where $i_1, i_2, \dotsc, i_k$ are distinct indices.
The relevance of cyclic products is demonstrated by the following proposition.

\begin{proposition}\emph{\cite[Prop. 20]{the_bible}}
\label{prop:cyclic_products_equivalence}
A Cartan matrix $\mathcal{A}$ is symmetrizable if and only if
$$\mathcal{A}_{i_1i_2}\mathcal{A}_{i_2i_3}\dots\mathcal{A}_{i_ki_1} = \mathcal{A}_{i_2i_1} \mathcal{A}_{i_3i_2}\dots\mathcal{A}_{i_1 i_k}$$
for all distinct indices $i_1, i_2, \dotsc, i_k$.
\end{proposition}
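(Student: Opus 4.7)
The plan is to recast symmetrizability of $\mathcal{A}$ as the existence of positive scalars $(c_i)_{i \in S}$ satisfying $c_i \mathcal{A}_{ij} = c_j \mathcal{A}_{ji}$ for all $i,j \in S$; writing $D = \mathrm{diag}(\sqrt{c_i})$, this just asserts that $D \mathcal{A} D^{-1}$ is symmetric. Since $\mathcal{A}_{ij}$ and $\mathcal{A}_{ji}$ vanish simultaneously by the definition of a Cartan matrix, both the relation above and the cyclic-product identity are automatic whenever any factor vanishes. Hence I may restrict attention to the undirected \emph{support graph} $G$ whose vertices are the elements of $S$ and whose edges are the pairs $\{i,j\}$ with $\mathcal{A}_{ij} \neq 0$. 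On such an edge, $\mathcal{A}_{ij}$ and $\mathcal{A}_{ji}$ are both negative, so the ratio $\mathcal{A}_{ij}/\mathcal{A}_{ji}$ is a positive real number.

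The forward direction is then a direct telescoping computation: given positive $c_i$ with $\mathcal{A}_{ij}/\mathcal{A}_{ji} = c_j/c_i$ on edges of $G$, one computes
\[
\frac{\mathcal{A}_{i_1 i_2}\mathcal{A}_{i_2 i_3}\cdots \mathcal{A}_{i_k i_1}}{\mathcal{A}_{i_2 i_1}\mathcal{A}_{i_3 i_2}\cdots \mathcal{A}_{i_1 i_k}} \;=\; \prod_{\ell=1}^{k} \frac{c_{i_{\ell+1}}}{c_{i_\ell}} \;=\; 1
\]
(indices read cyclically modulo $k$), whence the two cyclic products agree.

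For the reverse direction, assume the cyclic product identity. I would fix a spanning forest $T$ of $G$, choose a base vertex $i_0$ in each connected component of $G$, declare $c_{i_0} := 1$, and, for every other vertex $j$ in the same component, define $c_j := \prod_{\ell=0}^{m-1} \mathcal{A}_{v_\ell v_{\ell+1}}/\mathcal{A}_{v_{\ell+1} v_\ell}$, where $i_0 = v_0, v_1, \ldots, v_m = j$ is the unique $T$-path from $i_0$ to $j$. Each $c_j$ is then a positive real, and $c_i \mathcal{A}_{ij} = c_j \mathcal{A}_{ji}$ holds on every edge of $T$ by construction. For a non-tree edge $\{i,j\}$ of $G$, the chord together with the unique $T$-path from $j$ back to $i$ forms a \emph{simple} cycle in $G$; the hypothesis applied to this simple cycle, combined with the telescoping definitions of $c_i$ and $c_j$ along $T$, delivers exactly $c_i \mathcal{A}_{ij} = c_j \mathcal{A}_{ji}$.

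The only point that needs any care is that the cyclic product identity is formulated only for tuples of \emph{distinct} indices, so it cannot a priori be invoked on arbitrary closed walks in $G$. The spanning-tree construction sidesteps this cleanly, since the only cycles one ever has to check (chord plus tree path between its endpoints) are automatically simple. Disconnectedness of $G$ causes no trouble, as the argument is carried out independently on each connected component.
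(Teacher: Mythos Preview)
The paper does not supply its own proof of this proposition; it is simply quoted from Vinberg \cite[Prop.~20]{the_bible}. Your argument is correct and is essentially the standard one: reformulate symmetrizability as the existence of positive weights $(c_i)$ with $c_i\mathcal{A}_{ij}=c_j\mathcal{A}_{ji}$, then interpret the cyclic-product condition as saying that the multiplicative $1$-cocycle $f(i\to j)=\mathcal{A}_{ij}/\mathcal{A}_{ji}$ on the support graph is a coboundary, which you verify by integrating along a spanning forest and using the hypothesis on the simple cycle determined by each chord. Your observation that only \emph{simple} cycles arise in this construction, so that the ``distinct indices'' restriction never bites, is exactly the point that makes the argument go through. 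One cosmetic remark: when you define the support graph via $\mathcal{A}_{ij}\neq 0$ you should exclude the diagonal (since $\mathcal{A}_{ii}=2$), but this is clearly what you intend.
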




The following lemma, due to Vinberg \cite{Vinberg_Rings} (see also \cite{ChoiChoi}), gives necessary and sufficient conditions for an irreducible Vinberg representation to have image in a conjugate of~$\mathrm{GL}_n(\mathbb{Z})$. We include a proof for the convenience of the reader. 

\begin{lemma}\label{lem:cyclic-products-integers}
Let $W$ be a large irreducible Coxeter group and $\rho:W\to\mathrm{GL}_n(\mathbb{R})$ be an irreducible Vinberg representation with Cartan matrix $\mathcal{A}$. There is a conjugate of $\rho$ with image in $\mathrm{GL}_n(\mathbb{Z})$ if and only if all cyclic products in $\mathcal{A}$ are integers.
\end{lemma}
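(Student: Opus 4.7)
My plan is to handle the ``only if'' direction by a direct trace calculation, and the ``if'' direction by first promoting the integrality of cyclic products to integrality of traces of every element of $\rho(W)$, and then invoking a classical invariant-lattice construction for absolutely irreducible representations.

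For $(\Leftarrow)$, after conjugating $\rho$ so that $\rho(W) \subset \mathrm{GL}_n(\mathbb{Z})$, each $\alpha_s \otimes v_s = \mathrm{Id} - \rho(s)$ is an integer matrix, and a straightforward induction gives the identity
\[(\alpha_{i_1}\otimes v_{i_1})\cdots (\alpha_{i_k}\otimes v_{i_k}) = \mathcal{A}_{i_1i_2}\mathcal{A}_{i_2i_3}\cdots \mathcal{A}_{i_{k-1}i_k}\,(\alpha_{i_k}\otimes v_{i_1}),\]
which upon taking traces shows that $\mathcal{A}_{i_1i_2}\cdots \mathcal{A}_{i_ki_1}\in \mathbb{Z}$. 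For $(\Rightarrow)$, I would expand $\rho(s_1)\cdots \rho(s_\ell) = \prod_i(\mathrm{Id} - \alpha_{s_i}\otimes v_{s_i})$ and take the trace term by term. Each summand is, up to sign, either $n$, $2$, or a product $\mathcal{A}_{s_{j_1}s_{j_2}}\cdots \mathcal{A}_{s_{j_q}s_{j_1}}$ indexed by a closed walk whose vertices may repeat; the key observation is that splitting such a walk at its first repeated vertex decomposes its cyclic product as a product of cyclic products over shorter closed sub-walks, and iterating yields a factorization into \emph{simple} cyclic products (with distinct indices), each of which is an integer by hypothesis. Hence $\mathrm{tr}(\rho(\gamma))\in \mathbb{Z}$ for every $\gamma \in W$.

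With integrality of traces in hand, I would next invoke that $\rho$ is absolutely irreducible by Theorem~\ref{thm:tits-vinberg2}.\eqref{item:strongly_irreducible} and form the $\mathbb{Z}$-subalgebra $A \subset M_n(\mathbb{R})$ generated by $\rho(W)$. By Burnside's density theorem, $A$ spans $M_n(\mathbb{R})$ over $\mathbb{R}$; the $\mathbb{Z}$-valued non-degenerate trace pairing on $A$ then forces the inclusion $A \subset \{x \in M_n(\mathbb{R}) : \mathrm{tr}(xa) \in \mathbb{Z}\ \forall a \in A\}$, the latter being discrete, so that $A$ itself is a $\mathbb{Z}$-lattice of rank $n^2$ in $M_n(\mathbb{R})$. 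For $v \in V$ in sufficiently general position, a dimension count then shows that $L := Av$ is a $\rho(W)$-invariant $\mathbb{Z}$-lattice of full rank $n$ in $V$, and a $\mathbb{Z}$-basis of $L$ conjugates $\rho$ into $\mathrm{GL}_n(\mathbb{Z})$. The principal obstacle will be verifying the genericity condition on $v$: concretely, one must show that $A \cap \{M \in M_n(\mathbb{R}) : Mv = 0\}$ achieves the expected $\mathbb{Z}$-rank $n^2 - n$ for some $v$, which follows from the discreteness of $A$ together with the observation that the ``bad'' locus of $v$ is contained in a proper real-algebraic subset of $V$.
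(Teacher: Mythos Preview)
Your ``only if'' direction and your proof that $\mathrm{tr}(\rho(\gamma))\in\mathbb{Z}$ for all $\gamma\in W$ are fine, and your argument that the $\mathbb{Z}$-algebra $A$ is a rank-$n^2$ lattice in $M_n(\mathbb{R})$ is correct. The gap is in the last step: the claim that $Av$ is a rank-$n$ lattice for \emph{generic} $v$ is false. For the image $Av$ of a rank-$n^2$ lattice $A\subset M_n(\mathbb{R})$ under $M\mapsto Mv$ to have $\mathbb{Z}$-rank exactly $n$, the $(n^2-n)$-dimensional subspace $K_v=\{M:Mv=0\}$ must be \emph{rational} with respect to $A$; this is not a Zariski-open condition on $v$ but a measure-zero one. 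Concretely, for $A=M_n(\mathbb{Z})$ and $v=(1,\pi,\dots,\pi^{n-1})$ one has $A\cap K_v=0$, so $Av$ has $\mathbb{Z}$-rank $n^2$ and is dense. Worse, the good locus can be empty: if $A\otimes\mathbb{Q}$ happens to be a nonsplit central simple $\mathbb{Q}$-algebra of degree $d>1$ (say an order in a quaternion algebra split at $\infty$), then every nonzero element of $A$ acts invertibly, so the map $A\to\mathbb{R}^n$ is injective for every $v\ne 0$ and $Av$ has rank $n^2>n$. In that situation absolute irreducibility and integrality of traces both hold, yet no invariant lattice exists; your argument does not use anything that excludes it.

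What is missing is precisely the role of the rank-$1$ elements $I-\rho(s)=\alpha_s\otimes v_s$. The paper uses them as follows: having shown (as you do) that $A$ is a lattice, one knows $A\otimes\mathbb{Q}$ is a simple $\mathbb{Q}$-algebra with $A\otimes\mathbb{R}\cong M_n(\mathbb{R})$, so by Wedderburn $A\otimes\mathbb{Q}\cong M_m(D)$ for a division algebra $D$ of $\mathbb{Q}$-degree $d$ with $md=n$; but under any such isomorphism every element has matrix rank divisible by $d$, and the presence of a rank-$1$ element forces $d=1$. Skolem--Noether then lets one conjugate $\rho$ into $\mathrm{GL}_n(\mathbb{Q})$, after which one takes $v\in\mathbb{Q}^n$ (not a ``generic'' real vector): then $Av\subset\mathbb{Q}^n$ is finitely generated and spanning, hence automatically free of rank $n$. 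Your outline can be repaired by inserting exactly this step; without it the genericity claim does not stand.
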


\begin{proof}
Suppose that $\rho$ has a conjugate with image in $\mathrm{GL}_n(\mathbb{Z})$. We can assume that this conjugate is $\rho$ itself. Let $i_1,\dots,i_k$ be indices between $1$ and $\mathrm{rank}(W)$. The trace of the product \begin{equation}
\label{equationcyclicproduct}
(I_n-\rho(s_{i_1}))(I_n-\rho(s_{i_2}))\dots(I_n-\rho(s_{i_k}))\in\mathrm{M}_n(\mathbb{Z})\end{equation}
is the cyclic product $\alpha_{i_1}(v_{i_2})\alpha_{i_2}(v_{i_3})\dots\alpha_{i_k}(v_{i_1})$, and the latter is therefore an integer.

Conversely, suppose that all cyclic products in $\mathcal{A}$ are integers. For a subring $R\subset \mathbb{C}$, denote by $R\rho(W)$ the $R$-span of $\rho(W)$ in $\mathrm{M}_n(\mathbb{C})$. Theorem \ref{thm:tits-vinberg2}, Item (\ref{item:strongly_irreducible}), shows that $\rho$ is absolutely irreducible, hence $\mathbb{C}\rho(W)=\mathrm{M}_n(\mathbb{C})$ \cite[Corollary 3.4, Chapter XVII]{Lang_Algebra}. Since $\mathbb{R}\rho(W)\subset\mathrm{M}_n(\mathbb{R})$ has complexification $\mathrm{M}_n(\mathbb{C})$, we have $\mathbb{R}\rho(W)=\mathrm{M}_n(\mathbb{R})$.

As an algebra, $\mathbb{Z}\rho(W)$ is generated by the elements $I_n-\rho(s_i)$ for $1\leq i\leq \mathrm{rank}(W)$. Hence, every element of $\mathbb{Z}\rho(W)$ is a $\mathbb{Z}$-linear combination of products of the form \eqref{equationcyclicproduct}. Since, all cyclic products of $\mathcal{A}$ are integers, the trace of any element in $\mathbb{Z}\rho(W)$ is an integer.

Thanks to \cite[Lemma 1.2(b)]{Bass_Groupsofintegralrepresentationtype}, there exists a basis $(h_i)_{1\leq i\leq n^2}$ of $\mathrm{M}_n(\Cb)$ such that $\mathbb{Q}\rho(W)\subset \bigoplus_{i=1}^{n^2}\mathbb{Q}h_i.$
This shows that the $\mathbb{Q}$-algebra $\mathbb{Q}\rho(W)$ has complexification $\mathrm{M}_n(\mathbb{C})$. By Wedderburn's Theorem \cite[Theorem 2.1.3]{Gille_Centralsimplealgebras}, there is an isomorphism between $\mathbb{Q}\rho(W)$ and $\mathrm{M}_m(D)$, where $D$ is a division algebra over $\mathbb{Q}$ of dimension $d$ and $n=dm$. This isomorphism induces an automorphism of $\mathrm{M}_n(\mathbb{R})$, which is necessarily inner by the Skolem--Noether Theorem \cite[Theorem 2.7.2]{Gille_Centralsimplealgebras}. Hence there exists an element $g\in\mathrm{GL}_n(\mathbb{R})$ such that $g\mathbb{Q}\rho(W)g^{-1}=\mathrm{M}_m(D)$.
The rank of any element of $\mathrm{M}_m(D)$ is a multiple of $d$. Since $\mathbb{Q}\rho(W)$ contains $I_n-\rho(s_1)$ which has rank $1$, we conclude that $d=1$ and that $g\mathbb{Q}\rho(W)g^{-1}=\mathrm{M}_n(\mathbb{Q}).$

Up to conjugation, we can now assume that $\rho$ has image in $\GL_n(\mathbb{Q})$. 
Pick elements $A_1,\dots,A_{n^2}$ of $\mathbb{Z}\rho(W)$ which form a basis of $\mathbb{Q}\rho(W)$. The map $\mathbb{Z}\rho(W)\to\mathbb{Z}^{n^2}$ given by $\ X\mapsto (Tr(A_iX))_{1\leq i\leq n^2}
$ is well-defined and is an injection of $\mathbb{Z}$-modules. This shows that $\mathbb{Z}\rho(W)$ is finitely generated as a $\mathbb{Z}$-module.
Let $e_1,\dots,e_n$ be the canonical basis of $\mathbb{Q}^n$. Define $$L=\sum_{i=1}^n\mathbb{Z}\rho(W)e_i\subset \mathbb{Q}^n.$$ This is a finitely generated $\mathbb{Z}$-module which spans $\mathbb{Q}^n$. Hence $L$ is a free $\mathbb{Z}$-module \cite[Theorem 7.3, Chapter III]{Lang_Algebra}. Since $L$ is $W$-invariant, there exists a basis of $L$ in which $\rho$ has only integer entries.
\end{proof}

The applicability of Theorem \ref{thm:coxeter} to the construction of thin subgroups of $\mathrm{SL}_n(\mathbb{Z})$ is summarized in the following corollary.

\begin{corollary}\label{cor:integer_representation}
Let $W_S$ be a large irreducible Coxeter group and suppose that $\mathcal{A}$ is a nonsymmetrizable Cartan matrix compatible with $W_S$ all of whose cyclic products are integers. Then there is a Zariski-dense representation $\rho:W_S\to\mathrm{SL}^\pm_n(\mathbb{Z})$ of $W_S$ as a reflection group, where $n = \mathrm{rank}(\mathcal{A})$.
\end{corollary}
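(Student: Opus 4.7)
The plan is to assemble this corollary directly from the results established earlier in the paper. The key pieces already exist; we simply need to chain them correctly and observe that reflections have determinant $\pm 1$.

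First, I would invoke Example~\ref{eg:representation_from_cartan} to produce the ``tautological'' representation $\rho_\Cart : W_S \to \mathrm{GL}_{|S|}(\mathbb{R})$. Since $W_S$ is large and irreducible, that example guarantees $\rho_\Cart$ is a representation of $W_S$ as a reflection group with Cartan matrix $\Cart$. However, $\rho_\Cart$ is not necessarily irreducible, whereas Theorem~\ref{thm:coxeter} and Lemma~\ref{lem:cyclic-products-integers} both require irreducibility, so the next step is to descend to the irreducible quotient $\rho := (\rho_\Cart)_v^\alpha$ acting on $V_v / (V_\alpha \cap V_v)$, as described in \S\ref{subsection:Invariantsubspace}. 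That construction ensures $\rho$ is still a representation as a reflection group, has the same Cartan matrix $\Cart$, and by Theorem~\ref{thm:tits-vinberg2}.\eqref{item:irreducible_rep} has dimension $\mathrm{rank}(\Cart) = n$.

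Next, I would apply Theorem~\ref{thm:coxeter} to $\rho$. The hypothesis that $W_S$ is large implies it is not virtually abelian. Since $\Cart$ is nonsymmetrizable by assumption, Theorem~\ref{thm:tits-vinberg2}.\eqref{item:bilinear_form} tells us that $\rho$ preserves no nonzero symmetric bilinear form on $V_v/(V_\alpha \cap V_v)$. Therefore, case~\eqref{ADLM} of Theorem~\ref{thm:coxeter} applies and the Zariski-closure of $\rho(W_S)$ equals $\mathrm{SL}^{\pm}_n(\mathbb{R})$.

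Finally, I would integralize. Because all cyclic products of $\Cart$ are integers, Lemma~\ref{lem:cyclic-products-integers} provides $g \in \mathrm{GL}_n(\mathbb{R})$ such that $g \rho(\cdot) g^{-1}$ has image in $\mathrm{GL}_n(\mathbb{Z})$. Since each generator $\rho(s)$ is a reflection, its determinant is $-1$, so the conjugate lands in $\mathrm{SL}^{\pm}_n(\mathbb{Z})$. Conjugation is an algebraic automorphism of $\mathrm{SL}^{\pm}_n(\mathbb{R})$ and therefore preserves the Zariski-closure computed in the previous paragraph; hence the resulting representation is Zariski-dense in $\mathrm{SL}^{\pm}_n(\mathbb{R})$, and we are done. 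There is no real obstacle here beyond bookkeeping: each ingredient has already done its work, and the only mildly subtle point is the preliminary reduction to an irreducible representation, which is needed before both Theorem~\ref{thm:coxeter} and Lemma~\ref{lem:cyclic-products-integers} can be applied.
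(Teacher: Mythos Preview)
Your proof is correct and follows essentially the same route as the paper: construct $\rho_\Cart$, pass to the irreducible subquotient $(\rho_\Cart)_v^\alpha$ of dimension $\mathrm{rank}(\Cart)$, apply Theorem~\ref{thm:coxeter} via Theorem~\ref{thm:tits-vinberg2}.\eqref{item:bilinear_form} to get Zariski-density in $\mathrm{SL}^\pm_n(\mathbb{R})$, and then conjugate into $\mathrm{GL}_n(\mathbb{Z})$ via Lemma~\ref{lem:cyclic-products-integers}. Your additional remarks about determinants and conjugation preserving the Zariski-closure are correct elaborations the paper leaves implicit.
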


\begin{proof}
Consider the representation $\rho_\Cart : W_S \rightarrow \mathrm{GL}(V)$ of $W_S$ as a reflection group associated to~$\Cart$, where $V = \mathbb{R}^{|S|}$ (see Example \ref{eg:representation_from_cartan}). Following \S\ref{subsection:Invariantsubspace}, an identification of $V_v / (V_v \cap V_\alpha)$ with~$\mathbb{R}^n$ yields an irreducible representation $\rho:= (\rho_\Cart)_v^\alpha : W_S \rightarrow \mathrm{GL}_n(\mathbb{R})$ of $W_S$ as a reflection group with Cartan matrix $\Cart$. By Theorem \ref{thm:tits-vinberg2}, Item (\ref{item:bilinear_form}), the representation $\rho$ does not preserve any nonzero symmetric bilinear form on $\mathbb{R}^n$ as $\Cart$ is not symmetrizable. Thus, the Zariski-closure of $\rho(W_S)$ is $\mathrm{SL}_n^\pm(\mathbb{R})$ by Theorem \ref{thm:coxeter}. Moreover, by Lemma \ref{lem:cyclic-products-integers}, the representation $\rho$ is conjugate within $\mathrm{GL}_n(\mathbb{R})$ to a representation with image in $\mathrm{GL}_n(\mathbb{Z})$ since all the cyclic products in $\Cart$ are integers.
\end{proof}

\subsection{Right-angled thin reflection groups}
The goal of this section is to prove Theorem~\ref{thm:virtually-Zariski-dense}. We first consider the setting where the Coxeter diagram of $W_S$ is not a tree, and, along the way, obtain Theorem \ref{thm:surfacesubgroups}.

%
%

\begin{proposition}\label{prop:thin}
	An irreducible right-angled Coxeter group $W_S$ whose Coxeter diagram is not a tree admits a Zariski-dense representation $W_S \rightarrow \mathrm{SL}_n^\pm(\mathbb{Z})$ as a reflection group, where $n = |S|$. 
\end{proposition}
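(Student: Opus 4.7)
The plan is to apply Corollary \ref{cor:integer_representation}, which means I must exhibit a Cartan matrix $\mathcal{A}$ compatible with $W_S$ that is nonsymmetrizable, has full rank $|S|$, and whose cyclic products are all integers. First, I will verify that $W_S$ is large: since the Coxeter diagram $G$ of $W_S$ is connected, has at least three vertices (as it is not a tree), and contains at least one edge, $W_S$ is neither spherical nor affine—the only irreducible affine right-angled Coxeter group is $\widetilde{A}_1$, which has only two generators—and is therefore large. Also, since $G$ is not a tree, it contains a cycle along distinct vertices $v_1, v_2, \ldots, v_k$ of some length $k \geq 3$ (indices read modulo $k$).

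I will then build $\mathcal{A}$ as a single-entry perturbation of a symmetric baseline. Let $A$ denote the adjacency matrix of $G$. I will choose an integer $M \geq 2$ such that $2/M$ is not an eigenvalue of $A$—possible because $A$ has only finitely many eigenvalues—so that $\mathcal{A}_0 := 2I - MA$ is an invertible symmetric Cartan matrix compatible with $W_S$ whose off-diagonal entries are $-M$ on edges of $G$ and $0$ elsewhere. Given an integer parameter $x \geq 1$, define
\[
\mathcal{A}(x) := \mathcal{A}_0 - x \, e_{v_2} e_{v_1}^\top,
\]
which agrees with $\mathcal{A}_0$ except that its $(v_2, v_1)$ entry is $-M-x$. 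It is immediate that $\mathcal{A}(x)$ is a Cartan matrix compatible with $W_S$ (the pairwise product on the edge $\{v_1, v_2\}$ is $M(M+x) \geq 4$) and has integer entries, hence integer cyclic products. It is also nonsymmetrizable: the cyclic product along $v_1 v_2 \cdots v_k v_1$ equals $(-M)^k$ in one direction and $(-M)^{k-1}(-M-x)$ in the other, and these differ since $x \neq 0$, so Proposition \ref{prop:cyclic_products_equivalence} applies.

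The main obstacle will be ensuring $\mathcal{A}(x)$ has full rank, which I intend to address using the matrix determinant lemma:
\[
\det \mathcal{A}(x) = \det(\mathcal{A}_0) \cdot \Bigl(1 - x \cdot e_{v_1}^\top \mathcal{A}_0^{-1} e_{v_2}\Bigr).
\]
This is an affine function of $x$ that is nonzero at $x = 0$, and thus vanishes for at most one value of $x$. Choosing any $x \in \mathbb{Z}_{\geq 1}$ avoiding this possible bad value, Corollary \ref{cor:integer_representation} then yields the desired Zariski-dense representation $W_S \to \mathrm{SL}_{|S|}^\pm(\mathbb{Z})$.
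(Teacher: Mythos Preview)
Your proof is correct and follows the same overall strategy as the paper: reduce to Corollary~\ref{cor:integer_representation} by exhibiting a compatible, nonsymmetrizable, full-rank Cartan matrix with integer cyclic products. The constructions differ only in detail. The paper chooses a spanning tree $\mathscr{T}$ and considers the family $\mathscr{A}_t$ with all off-diagonal entries scaled by $t$, breaking symmetry on \emph{every} edge outside $\mathscr{T}$; invertibility is obtained from $\mathscr{A}_0 = 2I_n$, so $\det(\mathscr{A}_t)$ is a nonzero polynomial in $t$. You instead fix an invertible symmetric baseline $\mathcal{A}_0 = 2I - MA$ (with $M$ chosen spectrally) and perturb a \emph{single} entry on one cycle edge, using the matrix determinant lemma to see that $\det \mathcal{A}(x)$ is affine and nonzero at $x=0$. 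Your approach requires the preliminary choice of $M$ but gives a more localized perturbation; the paper's approach avoids the spectral step at the cost of a less explicit final matrix. Both arguments are equally short and ultimately rest on the same polynomial-nonvanishing trick to secure full rank.
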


\begin{proof}
Let $s_1, \ldots, s_n$ be an enumeration of $S$, where we identify the latter set with the vertex set of the Coxeter diagram $\mathscr{G}_{W_S}$ of $W_S$. Choose a spanning tree $\mathscr{T}$ in $\mathscr{G}_{W_S}$, and for $t \in \mathbb{R}$, let~$\mathscr{A}_t$ be the $n \times n$ real matrix whose $(i,j)^\text{th}$ entry $(\mathscr{A}_t)_{ij}$ is given by
\[
(\mathscr{A}_t)_{ij} = \begin{cases} 2 & i=j \\ 0 & i \neq j, \text{ and $s_i$ and $s_j$ are not adjacent in  $\mathscr{G}_{W_S}$} \\ -2t & i < j, \text{ and $s_i$ and $s_j$ are  adjacent  in $\mathscr{G}_{W_S}$} \\ -2t & i > j,  \text{ and $s_i$ and $s_j$ are  adjacent in $\mathscr{T}$} \\ -3t & \text{otherwise}.  \end{cases}
\]
For $t>0$, the Cartan matrix $\mathscr{A}_t$ is not symmetrizable since $\mathscr{T} \neq \mathscr{G}_{W_S}$. Since $\mathscr{A}_0 = 2 I_n$, we have that $\det(\mathscr{A}_0) \neq 0$, and so the polynomial $\det(\mathscr{A}_t)$ in $t$ is not the constant polynomial $0$. Thus, for a sufficiently large positive integer $k$, we have $\det(\mathscr{A}_k) \neq 0$. Since $\Cart_k$ is compatible with $W_S$, the conclusion now follows from Corollary \ref{cor:integer_representation}.
\end{proof}

\begin{proof}[Proof~of~Theorem~\ref{thm:surfacesubgroups}]
We first consider the case $n \geq 5$. For odd (respectively, even) such~$n$, the group $W_n$ generated by the reflections in the sides of a right-angled $n$-gon in the hyperbolic plane $\mathbb{H}^2$ possesses a subgroup of index $8$ (respectively, of index $4$) by which the quotient of $\mathbb{H}^2$  is a closed oriented surface of genus $n-3$ (resp., of genus $\frac{n-2}{2}$) \cite{Edmonds_Torsionfreesubgroups}. Moreover, the Coxeter diagrams of these hyperbolic reflection groups are not trees. The conclusion of Theorem \ref{thm:surfacesubgroups} thus follows from Proposition \ref{prop:thin}.

To obtain the statement for $n=4$, observe for instance that the integer matrix
$$
\left(
\begin{array}{rrrrr}
2 & -2 & 0 & 0 & -1 \\
-4 & 2 & -2 & 0 & 0 \\
0 & -4 & 2 & -2 & 0 \\
0 & 0 & -4 & 2 & -2 \\
-12 & 0 & 0 & -4 & 2 \\
\end{array}
\right)
$$
is of rank $4$, is a Cartan matrix compatible with $W_5$, and is not symmetrizable. We now conclude in this case by applying Corollary \ref{cor:integer_representation}.

Finally, the integer matrix
$$
\left(
\begin{array}{rrr}
2 & -1 & -1 \\
-2 & 2 & -1 \\
-1 & -1 & 2 \\
\end{array}
\right)
$$
is a full-rank Cartan matrix compatible with the $(3,3,4)$-triangle group and is not symmetrizable. Since this triangle group contains a finite-index genus-$2$ surface subgroup, the $n=3$ case now follows again by applying Corollary \ref{cor:integer_representation}. (This surface subgroup of $\mathrm{SL}_3(\mathbb{Z})$ was discovered by Kac--Vinberg \cite{vinberg1967quasi} and also appeared in subsequent work of Long--Reid--Thistlethwaite \cite{long2011zariski}.)
\end{proof}

The following proposition provides the inductive step in the proof of Theorem \ref{thm:virtually-Zariski-dense}.

\begin{proposition}\label{prop:next-dimensions-achieved}
Let $W_S$ be an irreducible right-angled Coxeter group of rank $N$ and suppose one has a Zariski-dense representation $\rho:W_S\to\mathrm{SL}_n^\pm(\mathbb{Z})$ of $W_S$ as a reflection group, where $N \geq 3n+1$. Then there exists a Zariski-dense representation $\rho':W_S\to\mathrm{SL}_{n+1}^\pm(\mathbb{Z})$ of $W_S$ as a reflection group.
\end{proposition}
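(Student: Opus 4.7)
The plan is to apply Corollary~\ref{cor:integer_representation}: it suffices to construct a non-symmetrizable integer Cartan matrix $\Cart'$ of rank $n+1$ compatible with $W_S$. By Lemma~\ref{lem:cyclic-products-integers} together with a suitable normalization of the reflection pairs $(\alpha_s, v_s)_{s \in S}$ realizing $\rho$, I would first arrange the Cartan matrix $\Cart = (\alpha_s(v_t))_{s,t \in S}$ of $\rho$ itself to have integer entries. By Theorem~\ref{thm:tits-vinberg2}.\eqref{item:irreducible_rep} this $\Cart$ has rank $n$, and combining the Zariski-density hypothesis with Theorem~\ref{thm:coxeter} and Theorem~\ref{thm:tits-vinberg2}.\eqref{item:bilinear_form} shows that $\Cart$ is not symmetrizable.

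Next I would build $\Cart'$ as an integer perturbation $\Cart + B$, where $B$ is supported only on edges of the Coxeter diagram (pairs $s \neq t$ with $m_{st} = \infty$), has zero diagonal, and has non-positive entries of sufficiently large absolute value. This forces $\Cart'$ to be compatible with $W_S$: zeros are preserved at commuting pairs, diagonal entries remain $2$, and the condition $\Cart'_{st} \Cart'_{ts} \geq 4$ at non-commuting pairs becomes automatic for large $|B_{st}|$. Integrality of cyclic products follows from integrality of all entries of $\Cart'$, and non-symmetrizability can be preserved by leaving untouched some non-symmetric cycle of $\Cart$.

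The heart of the argument will be to achieve $\mathrm{rank}(\Cart') = n+1$. Since $\Cart$ has rank $n$ but $N \geq 3n+1$ rows, its row and column spans have codimension at least $2n+1$ in $\R^N$, leaving ample room for a low-rank perturbation to escape these spans. A natural attempt is a rank-one update $B = u v^T$ with disjoint supports $S_u := \mathrm{supp}(u)$ and $S_v := \mathrm{supp}(v)$ inside $S$, with every pair in $S_u \times S_v$ being an edge of $\mathscr{G}_{W_S}$ (so that $B$ is supported on non-commuting pairs and vanishes on the diagonal). One would then have $\mathrm{rank}(\Cart + uv^T) = n+1$ precisely when $u \notin \mathrm{col}(\Cart)$ and $v \notin \mathrm{row}(\Cart)$.

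The main obstacle will be the combinatorial step: producing such a pair $(S_u, S_v)$ together with integer vectors $u, v$ supported on them whose images avoid the $n$-dimensional row and column spans of $\Cart$. This is precisely where the threshold $N \geq 3n+1$ should intervene, through a counting argument balancing the $n$-dimensional spans of $\Cart$ against the number of available vertices of $\mathscr{G}_{W_S}$; a more local variant---modifying only the row and column of a single well-chosen vertex $s_0 \in S$ with sufficiently many neighbors in the Coxeter diagram---should work equally well, the existence of such $s_0$ being again guaranteed by pigeonhole under the same hypothesis.
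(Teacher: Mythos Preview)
Your overall framework is right --- build a compatible Cartan matrix $\Cart'$ with integer cyclic products, rank $n+1$, non-symmetrizable, then invoke Corollary~\ref{cor:integer_representation} --- and the rank-one update criterion ($\mathrm{rank}(\Cart+uv^T)=n+1$ iff $u\notin\mathrm{col}(\Cart)$ and $v\notin\mathrm{row}(\Cart)$) is correct. But the proof is incomplete at exactly the step you flag as the main obstacle, and your two suggested resolutions do not work as stated. The ``local variant'' (pick $s_0$ with many neighbours) fails already when $\mathscr{G}_{W_S}$ is an $N$-cycle: every vertex has degree $2$, so no $s_0$ has more than two neighbours regardless of $N$. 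The biclique version (disjoint $S_u,S_v$ with every cross-pair an edge) is not justified either; in the same cycle example the only bicliques are single edges. And your non-symmetrizability argument --- ``leave untouched some non-symmetric cycle'' --- assumes you can avoid all such cycles with your perturbation, which you have not shown.

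The paper does the simplest possible instance of your rank-one update: it changes a \emph{single} entry $(i_0,j_0)$. The point you are missing is how $N\geq 3n+1$ produces such a pair. Choose $n$ indices whose $\alpha$'s form a basis and $n$ indices whose $v$'s span; let $K$ be the complement, so $|K|\geq N-2n\geq n+1$. If every pair in $K$ commuted, then $\Cart_K=2I$ would be a principal submatrix of rank $|K|\geq n+1$, contradicting $\mathrm{rank}(\Cart)=n$. Hence $K$ contains an edge $\{i_0,j_0\}$, and by construction $e_{i_0}\notin\mathrm{col}(\Cart)$ and $e_{j_0}\notin\mathrm{row}(\Cart)$, which is exactly your rank criterion. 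For non-symmetrizability the paper replaces $\Cart_{i_0j_0}$ by $(1+t)\Cart_{i_0j_0}$ and splits into two cases: if $(i_0,j_0)$ lies on some nonzero length-$\geq 3$ cyclic product, that product becomes linear in $t$ while its reverse is constant, so they differ for all but one $t$; if not, all length-$\geq 3$ cyclic products are unchanged and $\Cart$ was already non-symmetrizable. This replaces your ``leave a cycle untouched'' with an argument that works regardless. Finally, the paper never normalizes $\Cart$ to have integer entries; it only uses that cyclic products are integers (Lemma~\ref{lem:cyclic-products-integers}), which the multiplicative scaling by $(1+t)$ preserves.
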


\begin{proof}
Let $S = \{s_1, \dotsc, s_N\}$. For each $i = 1, \dotsc, N$, write $\rho(s_i)=\mathrm{Id}-\alpha_i\otimes v_i$ where $\alpha_i \in ({\mathbb{R}^n})^*$ and $v_i \in \mathbb{R}^n$ satisfy $\alpha_i(v_i)=2$. Let $\mathcal{A} = \left(\mathcal{A}_{ij}\right)_{1 \leq i,j \leq N} = \left( \alpha_i(v_j) \right)_{1 \leq i,j \leq N}$ be the Cartan matrix for $\rho$. Since $\rho$ is irreducible, the intersection of the kernels of $\alpha_1, \dotsc, \alpha_N$ equals $\{ 0 \}$ and the span of~$v_1, \dotsc, v_N$ equals $\mathbb{R}^n$. So, there exist $i_1, \dotsc, i_n, j_1, \dotsc, j_n \in \{ 1, \dotsc, N\}$ such that the intersection of the kernels of  $\alpha_{i_1}, \dotsc, \alpha_{i_n}$ equals $\{ 0 \}$ and the span of $v_{j_1}, \dotsc, v_{j_n}$ equals $\mathbb{R}^n$. Define
$$ K := \{ 1, \dotsc, N \} \smallsetminus \left( \{ i_1, \dotsc, i_n \} \cup \{ j_1, \dotsc, j_n \} \right).$$
We observe that there exist distinct $i_0,j_0\in K$ such that $m_{i_0,j_0} = \infty$. Indeed, suppose for a contradiction that $m_{ij} = 2$ for all distinct $i,j \in K$. Then the submatrix $\mathcal{A}_K$ of $\mathcal{A}$ is the diagonal matrix $2\,\mathrm{Id}$, and hence 
$$ \mathrm{rank}(\mathcal{A}) \geq \mathrm{rank}(\mathcal{A}_K) \geq N - 2n \geq n+1.$$ Since $\mathrm{rank}(\mathcal{A}) = n$, we obtain a contradiction.

For each $i = 1, \dotsc, N $ and $t \in \mathbb{N}$, define $\alpha^t_i \in ({\mathbb{R}^{n+1}})^*$ and $v^t_i \in \mathbb{R}^{n+1}$ as follows: 
\[
\alpha^t_i  = \begin{cases} 
(\alpha_{i_0}, \sqrt{t}) & \textrm{ if } i = i_0 \\ 
(\alpha_i, 0) & \textrm{ otherwise,} 
\end{cases}
\quad\quad \textrm{and} \quad\quad
v^t_i  = \begin{cases} 
(v_{j_0}, \sqrt{t} \mathcal{A}_{i_0 j_0} ) & \textrm{ if } i = j_0 \\ 
(v_i, 0) & \textrm{ otherwise} 
\end{cases}
\]where we view $\mathbb{R}^{n+1}$ as $\mathbb{R}^n\times\mathbb{R}$.
Define $\Cart_t=\left( \alpha^t_i(v^t_j) \right)_{1 \leq i, j \leq N}$, the $(i,j)^\text{th}$ entry of $\Cart_t$ is given by \[
\alpha^t_i(v^t_j)  = \begin{cases} 
	(1+t)\mathcal{A}_{i_0 j_0} & \textrm{ if } (i,j) = (i_0,j_0) \\ 
	\mathcal{A}_{i j} & \textrm{ otherwise.} 
\end{cases}
\]
Hence, $\Cart_t$ is a Cartan matrix compatible with $W_S$. For $t\neq 0$, the intersection of the kernels of $\alpha^t_1, \dotsc, \alpha^t_N$ equals $\{ 0 \}$ and the span of $v^t_1, \dotsc, v^t_N$ equals $\mathbb{R}^{n+1}$ so that $\Cart_t$ is of rank $n+1$.
Since~$\rho(W_S)$ is a subgroup of $\mathrm{SL}^{\pm}_n(\mathbb{Z})$, all the cyclic products of $\mathcal{A}$ are integers by Lemma \ref{lem:cyclic-products-integers}. So, for each $t \in \mathbb{N}$, all the cyclic products of $\mathcal{A}_t$ are also integers.

We claim that for all $t \in \mathbb{N}$, except possibly one value, $\mathcal{A}_t$ is not symmetrizable. Indeed, we need to consider one of the following two cases: either there exists a nonzero cyclic product of length $\geq 3$
$$ \mathcal{A}_{k_1 k_2} \mathcal{A}_{k_2 k_3} \dotsm \mathcal{A}_{k_{\ell} k_1} $$
with $(i_0,j_0) = (k_1,k_2)$, or no such cyclic product exists. In the first case, for all $t\in\mathbb{N}$ except possibly one value, $$(\mathcal{A}_t)_{k_1 k_2} (\mathcal{A}_t)_{k_2 k_3} \dotsm (\mathcal{A}_t)_{k_\ell k_1}\neq(\mathcal{A}_t)_{k_1 k_{\ell}} (\mathcal{A}_t)_{k_{\ell} k_{\ell-1}} \dotsm (\mathcal{A}_t)_{k_2 k_1}$$ since the cyclic product on the left is a degree-one polynomial in $t$ and the one on the right is constant as $t$ varies. Hence for all $t\in\mathbb{N}$ except possibly one value, $\Cart_t$ is not symmetrizable. In the second case, every nonzero cyclic product 
$$ (\mathcal{A}_t)_{k_1 k_2} (\mathcal{A}_t)_{k_2 k_3} \dotsm (\mathcal{A}_t)_{k_{\ell} k_1} $$
of length $\geq 3$ is equal to $ \mathcal{A}_{k_1 k_2} \mathcal{A}_{k_2 k_3} \dotsm \mathcal{A}_{k_{\ell} k_1}$. But, since $\rho$ is Zariski-dense, the Cartan matrix~$\mathcal{A}$ is not symmetrizable. By Proposition \ref{prop:cyclic_products_equivalence}, this implies the existence of a nonzero cyclic product 
$$ \mathcal{A}_{k_1 k_2} \mathcal{A}_{k_2 k_3} \dotsm \mathcal{A}_{k_{\ell} k_1} \neq \mathcal{A}_{k_1 k_{\ell}} \mathcal{A}_{k_{\ell} k_{\ell-1}} \dotsm \mathcal{A}_{k_{2} k_1}$$ 
of $\mathcal{A}$, and such a cyclic product must have length $\geq 3$. So, $\Cart_t$ is not symmetrizable for any~$t\in\mathbb{N}$. 

The conclusion now follows from Corollary \ref{cor:integer_representation}.
\end{proof}


The following two lemmas will allow us to pass to convenient finite-index reflection subgroups during the proof of Theorem \ref{thm:virtually-Zariski-dense}.

\begin{lemma}\label{lem:NotATree}
	Let $W_S$ be an irreducible right-angled Coxeter group with $|S| \geq 3$ whose Coxeter diagram is a tree. Then there exists an index-$2$ reflection subgroup $W_{S'}$ of $W_S$ with $|S'| = |S|$ whose Coxeter diagram contains a triangle.
\end{lemma}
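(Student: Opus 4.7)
The plan is to produce $W_{S'}$ as the kernel of a suitable homomorphism $W_S \to \mathbb{Z}/2$ that kills a single carefully chosen leaf generator. Let $T$ denote the Coxeter diagram of $W_S$, which is a tree. Since $|S| \geq 3$, I can fix a leaf $s_0 \in S$ with unique neighbor $t_0$, and then, because $T \smallsetminus \{s_0\}$ is a tree on at least two vertices, fix a second neighbor $u$ of $t_0$ in $T$. Define $\pi : W_S \to \mathbb{Z}/2$ by $\pi(s_0) = 1$ and $\pi(s) = 0$ for all other $s \in S$; this extends to a homomorphism since $W_S$ is right-angled and the defining relations $s^2 = 1$ and $st = ts$ are automatically satisfied in $\mathbb{Z}/2$. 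Set $K := \ker \pi$, an index-$2$ subgroup of $W_S$.

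The next step is to identify $K$ as a Coxeter group of rank $|S|$. A Reidemeister--Schreier computation with transversal $\{1, s_0\}$ shows that $K$ is generated by $S \smallsetminus \{s_0\}$ together with the conjugates $s_0 s s_0$ for $s \in S \smallsetminus \{s_0\}$. Because $s_0$ is a leaf of $T$, it commutes with every $s \in S \smallsetminus \{s_0, t_0\}$, so $s_0 s s_0 = s$ for all such $s$ and the generating set collapses to $S' := (S \smallsetminus \{s_0\}) \cup \{s_0 t_0 s_0\}$, which has cardinality $|S|$. That $(K, S')$ is itself a Coxeter system with the pairwise orders one would naively compute follows from the Deodhar--Dyer theorem on reflection subgroups of Coxeter groups; alternatively, one can argue geometrically, identifying $\widetilde{\Delta} \cup \rho(s_0)\widetilde{\Delta}$ as a fundamental chamber for $K$ on the Tits--Vinberg cone whose walls correspond exactly to the elements of $S'$.

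It remains to locate a triangle in the Coxeter diagram of $(K,S')$. My candidate is $\{t_0,\, u,\, s_0 t_0 s_0\}$. The pair $\{t_0, u\}$ is an edge of $T$ by construction; the pair $\{t_0,\, s_0 t_0 s_0\}$ is non-commuting because $t_0 \cdot s_0 t_0 s_0 = (t_0 s_0)^2$ has infinite order in the dihedral subgroup $\langle s_0, t_0 \rangle$; and the pair $\{u,\, s_0 t_0 s_0\}$ is non-commuting because $u$ commutes with $s_0$ (as $u \neq t_0$ is not a neighbor of the leaf $s_0$), whence $(u \cdot s_0 t_0 s_0)^n = s_0 (u t_0)^n s_0$, which has infinite order since $\{u, t_0\}$ is an edge of $T$. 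The main subtlety I anticipate is the verification that $S'$ is a Coxeter generating set for $K$ with exactly these relations, as opposed to merely a group-theoretic generating set; this is precisely where the right-angled hypothesis, together with Deodhar--Dyer (or an explicit fundamental-domain argument in the Tits--Vinberg cone), is essential.
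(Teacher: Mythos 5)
Your proposal is correct and follows essentially the same route as the paper's proof: pass to the index-$2$ subgroup that is the kernel of the sign character attached to a leaf $s_0$ of the tree, observe via Reidemeister--Schreier (and the fact that $s_0$ commutes with everything except its unique neighbor $t_0$) that the generating set collapses to $S' = (S \smallsetminus \{s_0\}) \cup \{s_0 t_0 s_0\}$, and exhibit the triangle $\{t_0, u, s_0 t_0 s_0\}$. Your write-up supplies slightly more detail than the paper on two points the paper states without comment, namely that $(K, S')$ is a Coxeter system and that each of the three pairs in the candidate triangle has infinite order product; these extra verifications are correct.
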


\begin{proof}
Let $s_1, s_2, \dotsc, s_n$ be an enumeration of $S$, where $S$ is identified with the vertex set of the Coxeter diagram $\mathcal{G}_{W_S}$ of $W_S$. Since $\mathcal{G}_{W_S}$ is a tree, we may assume that $s_1$ is a leaf of~$\mathcal{G}_{W_S}$, i.e., a vertex of degree $1$, and that $s_2$ is the unique vertex adjacent to $s_1$. Furthermore, since $n \geq 3$ and $\mathcal{G}_{W_S}$ is connected, we may also assume that $s_3$ is adjacent to $s_2$. Note that $s_1$ commutes with $s_i$ for each $i \geq 3$, and hence $s_1 s_i s_1^{-1} = s_i$. The subgroup generated by $$S' = \{ s'_2, s_2, s_3, \dotsc, s_n\}, \textrm{ where } s'_2 = s_1 s_2 s_1^{-1},$$ is a subgroup of index $2$ in $W_{S}$ which is itself a Coxeter group $W_{S'}$, and the Coxeter diagram of $W_{S'}$ contains the triangle $\{ s'_2, s_2, s_3 \}$. 
\end{proof}

\begin{lemma}\label{lem:larger_rank}
	Let $W_S$ be an irreducible right-angled Coxeter group with $|S| \geq 3$. Then there is an index-$2$ reflection subgroup $W_{S'}$ of $W_S$ with $|S'| > |S|$. 
\end{lemma}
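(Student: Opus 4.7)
The plan is to mirror Lemma~\ref{lem:NotATree}, but to start from a vertex of $\mathcal{G}_{W_S}$ of degree at least $2$ rather than from a leaf. Such a vertex exists because $W_S$ is irreducible (so $\mathcal{G}_{W_S}$ is connected and has at least $|S|-1$ edges), whereas if every vertex had degree $\leq 1$ the handshake identity would force $|E(\mathcal{G}_{W_S})| \leq |S|/2$, contradicting $|S| \geq 3$. Fix such a vertex $s_1$ and let $U \subset S \smallsetminus \{s_1\}$ denote its set of neighbors in $\mathcal{G}_{W_S}$, so that $|U| \geq 2$.

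Following the previous lemma, I would then use the right-angled hypothesis to define a homomorphism $\phi \colon W_S \to \mathbb{Z}/2\mathbb{Z}$ by $\phi(s_1) = 1$ and $\phi(s) = 0$ for $s \in S \smallsetminus \{s_1\}$; all nontrivial Coxeter relations $(s_i s_j)^{m_{ij}} = 1$ satisfy $m_{ij} = 2$ and are automatically preserved. Set $W_{S'} := \ker \phi$, an index-$2$ subgroup. A Reidemeister--Schreier computation with the transversal $\{1, s_1\}$ shows that $W_{S'}$ is generated by $\{s : s \in S \smallsetminus \{s_1\}\} \cup \{s_1 s s_1 : s \in S \smallsetminus \{s_1\}\}$. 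Since $s_1 s s_1 = s$ precisely when $s$ commutes with $s_1$, i.e.\ when $s \notin U$, this reduces to
$$
S' := (S \smallsetminus \{s_1\}) \cup \{s_1 s s_1 : s \in U\},
$$
a set of pairwise distinct reflections of $W_S$, of cardinality $|S| - 1 + |U| \geq |S| + 1 > |S|$.

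The step I expect to require the most care, exactly as in Lemma~\ref{lem:NotATree}, is verifying that $S'$ is in fact a Coxeter generating system for $W_{S'}$ rather than merely a generating set of reflections. I would check this by doubling the fundamental Tits--Vinberg polytope $\widetilde{\Delta}$ of $W_S$ across the wall fixed by $\rho(s_1)$ (for $\rho$ the geometric representation of $W_S$): the walls of $\widetilde{\Delta} \cup \rho(s_1)\widetilde{\Delta}$ are in natural bijection with $S'$, contributing two walls per $s \in U$ (namely $s$ and $s_1 s s_1$) and one per $s \in S \smallsetminus (U \cup \{s_1\})$. Vinberg's theorem applied to the doubled polytope, or equivalently Dyer's theorem \cite{dyer1990reflection} on the canonical Coxeter structure of reflection subgroups, then identifies $S'$ as the Coxeter generators of $W_{S'}$. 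This is the only point beyond the argument of Lemma~\ref{lem:NotATree} that requires extra justification.
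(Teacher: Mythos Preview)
Your proposal is correct and follows essentially the same approach as the paper: pick a vertex $s_1$ of degree at least $2$, pass to the kernel of the homomorphism $W_S \to \mathbb{Z}/2\mathbb{Z}$ sending $s_1$ to $1$, and observe that the resulting reflection subgroup has Coxeter generating set $(S \smallsetminus \{s_1\}) \cup \{s_1 s s_1^{-1} : s \text{ adjacent to } s_1\}$ of size $|S| - 1 + \deg(s_1) > |S|$. The paper's proof is terser and simply asserts the Coxeter structure on $S'$ (as it does in Lemma~\ref{lem:NotATree}), whereas you spell out the Reidemeister--Schreier step and the Vinberg/Dyer justification; both arrive at the identical construction and count.
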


\begin{proof}
Let $s_1, s_2, \dotsc, s_n$ be an enumeration of $S$, where $S$ is identified with the vertex set of the Coxeter diagram $\mathcal{G}_{W_S}$ of $W_S$. Since $\mathcal{G}_{W_S}$ is connected and $n \geq 3$, we may assume that $s_1$ is adjacent precisely to $s_2, \ldots, s_m$ with $m \geq 3$. The subgroup generated by $$S' = \{ s'_2, \ldots, s'_m, s_2, s_3, \dotsc, s_n\}, \textrm{ where } s'_i = s_1 s_i s_1^{-1},$$ is a subgroup of index $2$ in $W_{S}$ which is itself a Coxeter group $W_{S'}$ of rank $n+m-2$.
\end{proof}

Arguing by induction on $n$, the following proposition is immediate from Lemma~\ref{lem:larger_rank} and Proposition~\ref{prop:next-dimensions-achieved}.

\begin{proposition}\label{prop:higher_dimensions}
Let $W_S$ be an irreducible right-angled Coxeter group of rank $\geq 3$. Suppose one has a Zariski-dense representation $\rho : W_S \rightarrow \SL_{m}^\pm (\Z)$ of $W_S$ as a reflection group. Then for every $n \geq m$, there exists a finite-index reflection subgroup $\Gamma_n$ of $W_S$ and a Zariski-dense representation $\rho_n : \Gamma_n \rightarrow \SL_{n}^\pm (\Z)$ of $\Gamma_n$ as a reflection group.
\end{proposition}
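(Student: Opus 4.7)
I would argue by induction on $n \geq m$, with base case $\Gamma_m := W_S$, $\rho_m := \rho$. For the inductive step, suppose we are given an irreducible right-angled finite-index reflection subgroup $\Gamma_n \leq W_S$ together with a Zariski-dense representation $\rho_n : \Gamma_n \to \mathrm{SL}^{\pm}_n(\mathbb{Z})$ as a reflection group. Since Proposition \ref{prop:next-dimensions-achieved} raises the target dimension from $n$ to $n+1$ but demands a Coxeter rank of at least $3n+1$, the first step is to pass to a finite-index reflection subgroup $\Gamma_n^{\sharp} \leq \Gamma_n$ of Coxeter rank $\geq 3n+1$; then we apply Proposition \ref{prop:next-dimensions-achieved} to $(\Gamma_n^\sharp, \rho_n|_{\Gamma_n^\sharp})$ and set $(\Gamma_{n+1}, \rho_{n+1})$ to be the output.

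To produce $\Gamma_n^\sharp$, the plan is to iterate Lemma \ref{lem:larger_rank}, each application of which replaces an irreducible right-angled Coxeter group of rank $\geq 3$ by an index-$2$ reflection subgroup of strictly larger rank. The three hypotheses needed at each step --- right-angledness, rank $\geq 3$, and irreducibility --- persist during this iteration. Right-angledness of reflection subgroups of right-angled Coxeter groups is noted in \S\ref{sec:Vinberg-theory}; rank $\geq 3$ is preserved because every rank-$\leq 2$ Coxeter group is virtually abelian while finite-index subgroups of $W_S$ are not; and irreducibility follows from the observation that the Zariski closure of the image of any finite-index reflection subgroup $\Gamma' \leq \Gamma_n$ under $\rho_n$ has finite index in $\mathrm{SL}^\pm_n(\mathbb{R})$, hence contains the identity component $\mathrm{SL}_n(\mathbb{R})$, and then equals all of $\mathrm{SL}^\pm_n(\mathbb{R})$ since $\rho_n(\Gamma')$ contains a reflection of determinant $-1$. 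Linear irreducibility of $\rho_n|_{\Gamma'}$ then forces $\Gamma'$ to be Coxeter-irreducible, since otherwise a nontrivial product decomposition of $\Gamma'$ would give a proper invariant subspace spanned by the simple root vectors of one factor (which are fixed pointwise by the other).

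The point that I expect to require the most care is verifying that $\rho_n|_{\Gamma'}$ is itself a representation as a reflection group in the sense of Definition \ref{def:refl_gp}, and in particular that the cone condition (4) still holds for the new system of simple reflections. The generators of $\Gamma'$ produced by Lemma \ref{lem:larger_rank} are all $W_S$-conjugates of elements of $S$, so the associated Vinberg data $(\alpha, v)$ for $\Gamma'$ are the corresponding $\rho_n$-conjugates of the original data; the sign and compatibility conditions on the resulting Cartan matrix can then be checked by direct computation, using that the right-angled relation $m_{st} = \infty$ forces $\alpha_s(v_t)\alpha_t(v_s) \geq 4$. The nonempty-interior condition holds because the Tits--Vinberg fundamental cone of $\Gamma'$ is a finite union of $\rho_n$-translates of that of $\Gamma_n$, and hence has nonempty interior. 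Once these verifications are in hand, Proposition \ref{prop:next-dimensions-achieved} applied to $(\Gamma_n^\sharp, \rho_n|_{\Gamma_n^\sharp})$ closes the induction.
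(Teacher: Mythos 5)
Your proof is correct and follows exactly the inductive strategy the paper has in mind --- the paper's own proof is a one-sentence appeal to Lemma~\ref{lem:larger_rank} and Proposition~\ref{prop:next-dimensions-achieved} --- with the added virtue of spelling out the verifications the paper treats as immediate: that iterating Lemma~\ref{lem:larger_rank} preserves the hypotheses needed, and that restricting $\rho_n$ to the resulting finite-index reflection subgroup still gives a Zariski-dense Vinberg representation over $\mathbb{Z}$. Your observation that Zariski-density persists to finite-index subgroups (forcing linear irreducibility and hence Coxeter-irreducibility) is exactly what makes the iteration legitimate, and your sketch that the conjugated Vinberg data $(\alpha_s\circ\rho_n(s_1), \rho_n(s_1)v_s)$ still yield a compatible Cartan matrix with a nonempty-interior cone is correct and can indeed be checked directly (e.g.\ an entry like $\alpha_{s_i}(v_{s_j})-\alpha_{s_1}(v_{s_j})\alpha_{s_i}(v_{s_1})$ is a difference of a nonpositive and a nonnegative term), though the same conclusion also follows from the general theory of finitely generated reflection subgroups, which is presumably why the paper labels this step immediate.
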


\begin{proof}[Proof of Theorem~\ref{thm:virtually-Zariski-dense}]
By Proposition \ref{prop:thin}, if the Coxeter diagram of $W$ is not a tree, then there is a Zariski-dense representation $W \rightarrow \mathrm{SL}_N^\pm(\mathbb{Z})$ of $W$ as a reflection group. Otherwise, by Lemma \ref{lem:NotATree}, there is an index-$2$ reflection subgroup $\Gamma_N$ of $W$ of rank $N$ whose Coxeter diagram is not a tree. Hence, again by Proposition  \ref{prop:thin}, we obtain a Zariski-dense representation $\Gamma_N \rightarrow \mathrm{SL}_N^\pm(\mathbb{Z})$ of $\Gamma_N$ as a reflection group. We now conclude using Proposition \ref{prop:higher_dimensions}.
\end{proof}

\subsection{Non-right-angled thin reflection groups} The following theorem, which will be applied in \S\ref{sec:thin-hyperbolic-manifold-groups}, generalizes Proposition \ref{prop:thin} to certain Coxeter groups beyond the right-angled setting.

\begin{theorem}\label{thm:CartanMatrixOfFullRank}
Let $W_S$ be an irreducible Coxeter group of rank $N$ and with Coxeter diagram~$\mathcal{G}_{W_S}$. Assume $W_S$ satisfies the following:
\begin{enumerate}
\item\label{item:mij} $m_{ij} \in \{ 2, 3, 4, 6, \infty \}$ for all $s_i \neq s_j \in S$;

\item\label{item:cycle} $\mathcal{G}_{W_S}$ contains a cycle $\mathcal{C}$ which is not of the form $\widetilde{A}_k$ for any $k \geq 2$;

\item\label{item:delete_edges} there exists a subset $T = \{ s_{\ell_1}, s_{\ell'_1}, s_{\ell_2}, s_{\ell'_2}, \dotsc, s_{\ell_q}, s_{\ell'_q} \}$ of $S$ such that 

\begin{itemize}
\item $m_{\ell_p\ell'_p} = \infty$ for all $p = 1, \dotsc, q$; 

\item every irreducible factor of $W_{U}$, where $U = S \smallsetminus T$, is either spherical or quasi-Lann\'{e}r.
\end{itemize}
\end{enumerate}
Then there is a Zariski-dense representation $W_S\to\SL^{\pm}_{N}(\mathbb{Z})$ of $W_S$ as a reflection group.
\end{theorem}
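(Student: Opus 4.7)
The plan is to invoke Corollary \ref{cor:integer_representation}, so the goal is to produce an integer-entry Cartan matrix $\Cart$ compatible with $W_S$ that has rank $N$ and is not symmetrizable. Condition (1) makes integer entries feasible: for $m_{ij} \in \{2,3,4,6\}$ one has $4\cos^2(\pi/m_{ij}) \in \{0,1,2,3\}$, realized for instance by pairs $(\Cart_{ij},\Cart_{ji}) = (-1,-1), (-1,-2), (-1,-3)$ on edges labeled $3,4,6$; on $\infty$-edges any pair of negative integers of product $\geq 4$ is admissible.

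For full rank I would exploit condition (3). By (3), $W_U$ decomposes as a direct sum of spherical and quasi-Lann\'er irreducible factors; the Cartan matrix of a spherical factor is automatically positive definite (Proposition \ref{prop:types_correlation}), and a quasi-Lann\'er factor has rank equal to its number of generators by Lemma \ref{lem:irr=>dimV=N}. Selecting integer entries (equivalent, via diagonal conjugation, to the Tits matrix on each factor, which preserves rank) yields $\Cart|_U$ of rank $|U|=N-2q$. Now regard the entries $(x_p,y_p) = (\Cart_{\ell_p \ell'_p},\Cart_{\ell'_p \ell_p})$ on the matched $\infty$-pairs in $T$ as free variables and expand $\det(\Cart)$ via the Leibniz formula: the coefficient of the monomial $\prod_p x_p y_p$ equals $(-1)^q \det(\Cart|_U) \neq 0$, since the only permutations contributing to that monomial are the ones swapping $\ell_p\leftrightarrow \ell'_p$ for each $p$ and permuting $U$ arbitrarily. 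Hence $\det(\Cart)$ is a nontrivial polynomial in $(x_p,y_p)_p$, and integer values with $x_p y_p \geq 4$ making $\det(\Cart) \neq 0$ exist---for example, by taking all $|x_p y_p|$ large so that the leading monomial dominates.

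For non-symmetrizability I would use condition (2). Since $\mathcal{C}$ is not of type $\widetilde{A}_k$, some edge $e$ of $\mathcal{C}$ carries a label in $\{4,6,\infty\}$; assign asymmetric integer entries along $e$ (e.g., $(-1,-2)$ if $m=4$, $(-1,-3)$ if $m=6$, or $(-1,-k)$ with $k\geq 4$ if $m=\infty$) and symmetric integer entries on the remaining edges of $\mathcal{C}$. The forward and backward cyclic products around $\mathcal{C}$ then disagree by a nonzero factor, so $\Cart$ fails to be symmetrizable by Proposition \ref{prop:cyclic_products_equivalence}. This is compatible with the full-rank requirement: if $e$ happens to be a matched pair in $T$ one can take $(x_p,y_p) = (-1,-k)$ with $k$ large, which is both asymmetric and forces the leading coefficient $(-1)^q\det(\Cart|_U)$ to dominate; otherwise the asymmetric entries on $e$ are fixed and the remaining matched-pair variables suffice to attain $\det(\Cart) \neq 0$. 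Corollary \ref{cor:integer_representation} then concludes the proof.

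The main obstacle I anticipate is the selection of integer entries on $\infty$-edges inside $U$ when quasi-Lann\'er factors are present: the entries must be chosen so that $\Cart|_U$ has full rank, and if the cycle $\mathcal{C}$ lies within a quasi-Lann\'er factor then compatibility with the asymmetric edge of Step 3 may require a case-by-case verification among the integer equivalence classes of Cartan matrices respecting the restricted label set $\{2,3,4,6,\infty\}$. This bookkeeping aside, the argument mirrors the strategy used in Propositions \ref{prop:thin} and \ref{prop:next-dimensions-achieved}.
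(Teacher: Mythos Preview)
Your strategy matches the paper's: produce an integer Cartan matrix compatible with $W_S$ that is nonsymmetrizable and of full rank, then apply Corollary~\ref{cor:integer_representation}. The full-rank argument via the leading monomial in the $T$-pair variables is essentially the paper's (there a single scaling parameter $t$ replaces your independent variables, but the leading coefficient is the same $\det(\Cart|_U)$).

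There is a genuine gap in your non-symmetrizability step. You propose asymmetric integer entries on a single edge $e$ of $\mathcal{C}$ and \emph{symmetric} integer entries on the remaining edges. But an edge of label $4$ or $6$ admits no symmetric integer entries: compatibility forces $\Cart_{ij}\Cart_{ji}\in\{2,3\}$, neither of which is a perfect square. So if $\mathcal{C}$ contains more than one edge of label $4$ or $6$, your construction fails as stated. The paper's fix is to orient \emph{all} edges of $\mathcal{C}$ coherently---along a spanning path $\mathcal{T}\subset\mathcal{C}$ set the forward entry to $-1$ and the backward entry to $-4\cos^2(\pi/m_{ij})$, with the convention reversed on the one remaining edge---so that the two cyclic products around $\mathcal{C}$ have absolute values $1$ and $\prod_{\mathcal{C}} 4\cos^2(\pi/m_{ij})>1$, the strict inequality being precisely condition~(\ref{item:cycle}). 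Scaling both entries on each $T$-pair by $t$ multiplies the two cyclic products by the same power of $t$, so nonsymmetrizability persists for all $t\neq 0$. Finally, your concluding worry is unfounded: by Lemma~\ref{lem:irr=>dimV=N} (applied to the irreducible subquotient $\rho_v^\alpha$ of \S\ref{subsection:Invariantsubspace}), \emph{every} Cartan matrix compatible with a quasi-Lann\'er group has full rank, so $\det(\Cart|_U)\neq 0$ irrespective of the specific integer entries chosen on $U$, and no case analysis is required.
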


\begin{proof}
We may assume that $S = \{s_1, \dotsc, s_N\}$, where the edges $\overline{s_{1}s_{2}}$, $\overline{s_{2}s_{3}}$, $\dotsc$, $\overline{s_{r-1}s_{r}}$, and $\overline{s_{r}s_{1}}$ form the cycle $\mathcal{C}$. Let $\mathcal{T}$ be the path in $\mathcal{C}$ with edges $\overline{s_{1}s_{2}}$, $\overline{s_{2}s_{3}}$, $\dotsc$, $\overline{s_{r-1}s_{r}}$. Let $\mathcal{A}$ be the $N \times N$ real matrix whose $(i,j)^\text{th}$ entry $\mathcal{A}_{i j}$ is given by
\[
\mathcal{A}_{i j} = \begin{cases} 2 & \textrm{ if } i=j \\ 
0 & \textrm{ if } i \neq j \text{ and $s_i$ and $s_j$ are not adjacent in $\mathcal{G}_{W_S}$} \\ 
-1 & \textrm{ if } i < j \text{ and the edge $\overline{s_is_j}$ lies in $\mathcal{T}$}  \\
-4\cos^{2}\left(\tfrac{\pi}{m_{ij}}\right) & \textrm{ if } i > j \text{ and the edge $\overline{s_is_j}$ lies in $\mathcal{T}$} \\
-1 & \textrm{ if } i > j \text{ and the edge $\overline{s_is_j}$ does not lie in $\mathcal{T}$} \\ 
-4\cos^{2}\left(\tfrac{\pi}{m_{ij}}\right) & \textrm{ if } i < j \text{ and the edge $\overline{s_is_j}$ does not lie in $\mathcal{T}$}.  \end{cases}
\]
The values $4\cos^{2}\left(\tfrac{\pi}{m_{ij}}\right)$ are integers by assumption (\ref{item:mij}). Since $\mathcal{C}$ is not of the form $\widetilde{A}_k$ by assumption (\ref{item:cycle}),
$$| \mathcal{A}_{s_1 s_r}\mathcal{A}_{s_r s_{r-1}}\dotsm \mathcal{A}_{s_3 s_2}\mathcal{A}_{s_2 s_1}| = 4\cos^{2}\left(\tfrac{\pi}{m_{s_1 s_r}}\right) \dotsm 4\cos^{2}\left(\tfrac{\pi}{m_{s_2 s_1}}\right) > 1 = | \mathcal{A}_{s_1 s_2}\mathcal{A}_{s_2 s_3}\dotsm \mathcal{A}_{s_{r-1} s_r}\mathcal{A}_{s_r s_1}|.$$
Consequently, $\mathcal{A}$ is not symmetrizable by Proposition \ref{prop:cyclic_products_equivalence}. In other words, $W_S$ admits an integral Cartan matrix which is not symmetrizable. However, in general, $\mathcal{A}$ might not be of full rank. We therefore modify the Cartan matrix $\mathcal{A}$ as follows. Consider the $N \times N$ matrix~$\mathscr{A}_t$ whose $(i,j)^\text{th}$ entry $(\mathscr{A}_t)_{i j}$ is given by
\[
(\mathscr{A}_t)_{i j} = \begin{cases} 
 t \mathscr{A}_{i j} & \text{ if  $\{ i, j \} = \{ \ell_p, \ell'_p \}$ for some $p \in \{ 1, \dotsc, q \}$} \\ \mathscr{A}_{i j} & \text{otherwise}.  
\end{cases}
\]
The matrix $\mathscr{A}_t$ is not symmetrizable for $t \neq 0$ since $\mathscr{A}$ is not symmetrizable. The determinant of $\mathscr{A}_t$ is a polynomial of degree $2q$ with leading coefficient
$$ \mathscr{A}_{\ell_1 \ell'_1} \mathscr{A}_{\ell'_1 \ell_1} \dotsm \mathscr{A}_{\ell_q \ell'_q} \mathscr{A}_{\ell'_q \ell_q} \det(\mathscr{A}_{U}).$$
Since each irreducible factor of $W_U$ is spherical or quasi-Lann\'{e}r, we have $\det(\mathscr{A}_{U}) \neq 0$, which implies that $\det(\mathscr{A}_{t})$ is not the zero polynomial. Thus, for a sufficiently large positive integer~$t_0$, we have $\det(\mathscr{A}_{t_0}) \neq 0$. Note also that $W_S$ is large since $\mathcal{G}_{W_S}$ contains a cycle  which is not of the form $\widetilde{A}_k$ for any $k \geq 2$. Thus, the irreducible Coxeter group $W_S$ and the matrix $\mathscr{A}_{t_0}$ satisfy the assumptions of Corollary \ref{cor:integer_representation}, which concludes the proof.
\end{proof}

\begin{remark}
In the case that $W_S$ is moreover Gromov-hyperbolic, it follows from \cite[Cor.~1.18]{danciger2023convex} that one can arrange for the output Zariski-dense representation $W_S \rightarrow \mathrm{SL}_N^\pm(\mathbb{Z})$ in the statement of Theorem \ref{thm:CartanMatrixOfFullRank} to be $P_1$-Anosov by replacing each appearance of $-4$ as an entry of the Cartan matrix $\mathcal{A}$ in the above proof with, say, $-5$.
\end{remark}

\section{Thin hyperbolic manifold groups}\label{sec:thin-hyperbolic-manifold-groups}

In this section, we apply Theorem \ref{thm:CartanMatrixOfFullRank} to prove Theorem \ref{thm:hyperbolic_manifolds}, which consists of the following three propositions. 

\begin{proposition}\label{prop:application-three-manifold}
	There exists a closed hyperbolic $3$-manifold $M$ such that for each $n \geq 4$, there is a finite-index subgroup $\Gamma_n<\pi_1(M)$ that embeds Zariski-densely in $\mathrm{SL}_n(\mathbb{Z})$.
\end{proposition}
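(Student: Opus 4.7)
The plan is to take $M := \mathbb{H}^3 / \Gamma$, where $\Gamma$ is a torsion-free finite-index subgroup (furnished by Selberg's lemma) of the reflection group $W$ of a compact right-angled hyperbolic dodecahedron in $\mathbb{H}^3$. The group $W$ is an irreducible right-angled Coxeter group of rank $12$ whose Coxeter diagram (where generators are joined by an $\infty$-labeled edge exactly when the corresponding faces of the dodecahedron are disjoint) has cycles and is not a tree. For every $n \geq 12$, Theorem~\ref{thm:virtually-Zariski-dense} applies to $W$ directly and produces a finite-index reflection subgroup $W_n \leq W$ together with a Zariski-dense representation $\rho_n \colon W_n \to \mathrm{SL}_n^{\pm}(\mathbb{Z})$.

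For the intermediate range $4 \leq n \leq 11$, I would apply Corollary~\ref{cor:integer_representation} to $W$ itself, equipped with a carefully constructed integer, non-symmetrizable, rank-$n$ Cartan matrix $\mathscr{A}^{(n)}$ compatible with $W$. Because $W$ is right-angled, the compatibility constraints on $\mathscr{A}^{(n)}$ impose only that $\mathscr{A}^{(n)}_{ii} = 2$, $\mathscr{A}^{(n)}_{ij} = 0$ for non-adjacent pairs of generators, and $\mathscr{A}^{(n)}_{ij}, \mathscr{A}^{(n)}_{ji}$ are negative integers with product at least $4$ at the $\infty$-labeled pairs. I would enforce non-symmetrizability by choosing asymmetric integer factorizations of $\mathscr{A}^{(n)}_{ij}\mathscr{A}^{(n)}_{ji}$ around a fixed cycle of the Coxeter diagram (as in the proof of Proposition~\ref{prop:thin}), and then tune the remaining entries to force $\mathrm{rank}(\mathscr{A}^{(n)}) = n$. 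Since the entries of $\mathscr{A}^{(n)}$ are integers, so are its cyclic products, and Corollary~\ref{cor:integer_representation} then yields the sought Zariski-dense representation $\rho_n \colon W \to \mathrm{SL}_n^{\pm}(\mathbb{Z})$ of dimension $n$, obtained via Vinberg's $\rho_v^\alpha$-quotient of \S\ref{subsection:Invariantsubspace}.

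I would then set $\Gamma_n := \rho_n^{-1}(\mathrm{SL}_n(\mathbb{Z})) \cap \Gamma \cap W_n$ (with $W_n := W$ for $n \leq 11$); this is a finite-index subgroup of $\pi_1(M) = \Gamma$. The restriction of $\rho_n$ embeds $\Gamma_n$ Zariski-densely in $\mathrm{SL}_n(\mathbb{R})$, since Zariski-density passes to finite-index subgroups (the identity component of the Zariski closure being preserved) and $\mathrm{SL}_n$ is the identity component of $\mathrm{SL}_n^{\pm}$.

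The hard part is the explicit construction, for each $4 \leq n \leq 11$, of the rank-$n$ non-symmetrizable integer Cartan matrix $\mathscr{A}^{(n)}$ compatible with the dodecahedral Coxeter matrix: forcing the rank to equal a prescribed value $n < 12$ while simultaneously preserving non-symmetrizability is a constrained linear-algebra problem. I would resolve it by parameterizing a multi-parameter family of integer-valued compatible Cartan matrices and identifying parameter values at which the rank drops exactly to $n$ while an asymmetric cyclic product along some fixed cycle persists.
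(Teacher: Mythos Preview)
Your argument has a genuine gap in the range $4 \leq n \leq 11$: you assert the existence of integer non-symmetrizable Cartan matrices of each rank $n$ compatible with the dodecahedral right-angled Coxeter group, but you neither construct them nor prove they exist. This is not routine. The hyperbolic right-angled dodecahedron is arithmetic over $\mathbb{Q}(\sqrt 5)$, not $\mathbb{Q}$, so its natural rank-$4$ Cartan matrix has cyclic products in $\mathbb{Z}[\varphi]$ rather than $\mathbb{Z}$; whether any projective deformation yields an integer, non-symmetrizable, rank-$4$ Cartan matrix is a Diophantine question about integer points on the variety cut out by the $5\times 5$ minor-vanishing conditions, and ``identifying parameter values at which the rank drops'' is not a proof that such integer parameters exist. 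Even granting $n=4$, you could then reach all $n\geq 5$ via Proposition~\ref{prop:higher_dimensions} rather than seven further ad hoc constructions, but the $n=4$ case is the crux and you have not settled it.

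The paper sidesteps this by abandoning right-angled groups. It takes a compact hyperbolic triangular Coxeter prism $P_1\subset\mathbb{H}^3$ with dihedral angles in $\{\pi/2,\pi/3,\pi/4\}$, whose reflection group $W_1$ has rank $5$; stacking $k$ copies of $P_1$ yields polytopes $P_k$ with reflection groups $W_k\leq W_1$ of index $k$ and rank $k+4$. Since all labels lie in $\{2,3,4,6,\infty\}$, Theorem~\ref{thm:CartanMatrixOfFullRank} applies to each $W_k$ and supplies a \emph{full-rank} integral non-symmetrizable Cartan matrix, hence a Zariski-dense map $W_k\to\mathrm{SL}^\pm_{k+4}(\mathbb{Z})$ for every $k\geq 1$. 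The remaining case $n=4$ is handled by one explicit rank-$4$ integer non-symmetrizable Cartan matrix for the $5$-generator group $W_1$. Thus the paper needs only a single, concretely exhibited rank drop (from $5$ to $4$), whereas your approach hinges on an unverified rank-$4$ construction for a $12$-generator group. (The paper's subsequent Remark does sketch a right-angled alternative, but via the L\"obell polytope $L_6$---which \emph{is} defined over $\mathbb{Z}$---combined with a bending argument and Proposition~\ref{prop:higher_dimensions}, not via reduced-rank Cartan matrices for the dodecahedron.)
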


\begin{proof}
Let $k$ be an integer $\geq 1$. By Andreev's theorem \cite{Andreev}, there exist compact hyperbolic Coxeter $3$-polytopes $P_k \subset \mathbb{H}^3$ with $k+4$ facets as in Figure \ref{fig:three-dimension}. 
\newcommand{\scalee}{1.0}
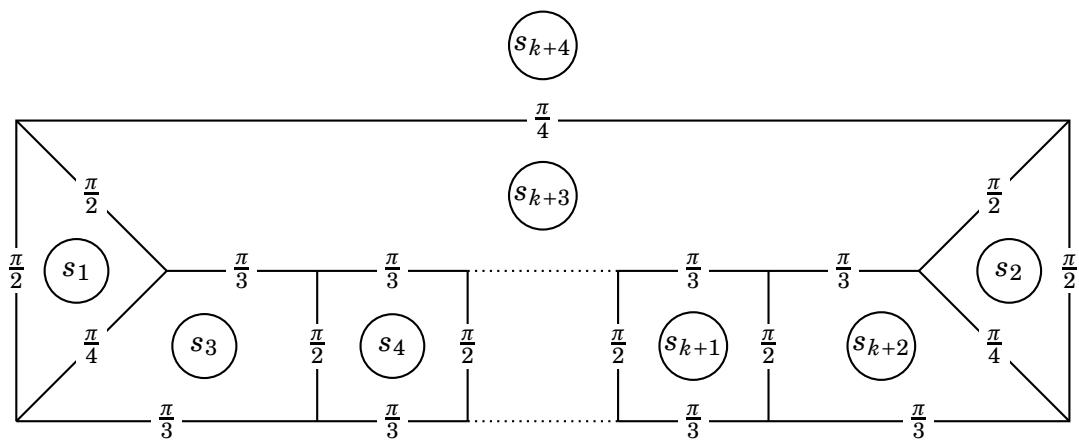
\begin{figure}[ht!]
\centering
\subfloat{
\begin{tikzpicture}[thick,scale=\scalee, every node/.style={transform shape}]
\draw (0,0) -- (6,0);
\draw[dotted] (6,0) -- (8,0);
\draw (8,0) -- (14,0) -- (14,4) -- (0,4) -- (0,0);
\draw (0,0) -- (2,2) -- (6,2); 
\draw[dotted] (6,2) -- (8,2); 
\draw (8,2) -- (12,2) -- (14,0);
\draw (0,4) -- (2,2);
\draw (14,4) -- (12,2);
\draw (4,2) -- (4,0);
\draw (6,2) -- (6,0);
\draw (8,2) -- (8,0);
\draw (10,2) -- (10,0);
\node[draw,circle, inner sep=4pt, minimum size=2pt] (1) at (0.8,2) {$s_1$};
\node[draw,circle, inner sep=4pt, minimum size=2pt] (2) at (2.5,1) {$s_3$};
\node[draw,circle, inner sep=4pt, minimum size=2pt] (3) at (5,1) {$s_4$};
\node[draw,circle, inner sep=1pt, minimum size=2pt] (n) at (9,1) {$s_{k+1}$};
\node[draw,circle, inner sep=1pt, minimum size=2pt] (n+1) at (11.5,1) {$s_{k+2}$};
\node[draw,circle, inner sep=4pt, minimum size=2pt] (n+2) at (13.2,2) {$s_2$};
\node[draw,circle, inner sep=1pt, minimum size=2pt] (n+3) at (7,3) {$s_{k+3}$};
\node[draw,circle, inner sep=1pt, minimum size=2pt] (n+4) at (7,5) {$s_{k+4}$};
\node[fill=white, inner sep=2pt, minimum size=2pt] (s2b) at (2,0) {$\tfrac{\pi}{3}$};
\node[fill=white, inner sep=2pt, minimum size=2pt] (s3b) at (5,0) {$\tfrac{\pi}{3}$};
\node[fill=white, inner sep=2pt, minimum size=3pt] (snb) at (9,0) {$\tfrac{\pi}{3}$};
\node[fill=white, inner sep=2pt, minimum size=3pt] (sn+1b) at (12,0) {$\tfrac{\pi}{3}$};
\node[fill=white, inner sep=2pt, minimum size=2pt] (s2t) at (3,2) {$\tfrac{\pi}{3}$};
\node[fill=white, inner sep=2pt, minimum size=2pt] (s3t) at (5,2) {$\tfrac{\pi}{3}$};
\node[fill=white, inner sep=2pt, minimum size=3pt] (snt) at (9,2) {$\tfrac{\pi}{3}$};
\node[fill=white, inner sep=2pt, minimum size=3pt] (sn+1t) at (11,2) {$\tfrac{\pi}{3}$};
\node[fill=white, inner sep=2pt, minimum size=3pt] (s2l) at (1,1) {$\tfrac{\pi}{4}$};
\node[fill=white, inner sep=2pt, minimum size=3pt] (s2r) at (4,1) {$\tfrac{\pi}{2}$};
\node[fill=white, inner sep=2pt, minimum size=3pt] (s3r) at (6,1) {$\tfrac{\pi}{2}$};
\node[fill=white, inner sep=2pt, minimum size=3pt] (snl) at (8,1) {$\tfrac{\pi}{2}$};
\node[fill=white, inner sep=2pt, minimum size=3pt] (snr) at (10,1) {$\tfrac{\pi}{2}$};
\node[fill=white, inner sep=2pt, minimum size=3pt] (sn+1r) at (13,1) {$\tfrac{\pi}{4}$};
\node[fill=white, inner sep=2pt, minimum size=3pt] (sn+3t) at (7,4) {$\tfrac{\pi}{4}$};
\node[fill=white, inner sep=2pt, minimum size=3pt] (s1l) at (0,2) {$\tfrac{\pi}{2}$};
\node[fill=white, inner sep=2pt, minimum size=3pt] (s1r) at (1,3) {$\tfrac{\pi}{2}$};
\node[fill=white, inner sep=2pt, minimum size=3pt] (sn+2l) at (13,3) {$\tfrac{\pi}{2}$};
\node[fill=white, inner sep=2pt, minimum size=3pt] (sn+2r) at (14,2) {$\tfrac{\pi}{2}$};
\end{tikzpicture}
}
\caption{A family of compact hyperbolic Coxeter $3$-polytopes $P_k$}\label{fig:three-dimension}
\end{figure}

Let $W_k$ be the reflection group in $\mathrm{Isom}(\mathbb{H}^3)$ generated by the set of reflections $$S = \{s_{1}, s_{2}, \dotsc, s_{k+3}, s_{k+4}\}$$ in the facets of $P_k$. Note that \( P_1 \) is a triangular prism and, for each \( k \geq 2 \), the Coxeter polytope \( P_k \) is isometric to 
\[
P_1 \;\cup\; w_2(P_1) \;\cup\; w_3(P_1) \;\cup\; \dotsm \;\cup\; w_k(P_1),
\]
where \( (w_2, w_3, w_4, w_5, \dotsc) = (s_2, s_1 s_2, s_2 s_1 s_2, s_1 s_2 s_1 s_2, \dotsc) \).
So $W_k$ is a subgroup of $W_1$ of index~$k$. From now on, we consider $W_k$ as an abstract Coxeter group. We show that $W_k$ satisfies all the assumptions of Theorem \ref{thm:CartanMatrixOfFullRank}. First, the dihedral angles in $P_k$ are either~$\tfrac{\pi}{2}$, $\tfrac{\pi}{3}$, or $\tfrac{\pi}{4}$, so that assumption (\ref{item:mij}) is satisfied. Second, the edges $\overline{s_{3} s_{k+3}}$, $\overline{s_{k+3} s_{k+4}}$ and $\overline{s_{k+4} s_{3}}$ form a cycle which is not of the form $\widetilde{A}_\ell$. For assumption (\ref{item:delete_edges}), we consider two cases: when $k$ is odd or when $k$ is even. In the case where $k$ is odd, if we set $T = \{ s_1, s_2 \} \cup \{s_3, s_{k+2}, s_4, s_{k+1}, \dotsc, s_{(k+5)/2-1}, s_{(k+5)/2+1}\}$, then \[m_{1,2} = m_{3,k+2} = m_{4,k+1} = \dotsm = m_{(k+5)/2-1,(k+5)/2+1} = \infty\]and $(W_{k})_U$, where $U = S \smallsetminus T = \{ s_{(k+5)/2}, s_{k+3}, s_{k+4}\}$, is a Lann\'{e}r Coxeter group. In the case where $k$ is even, if we instead set $$T = \{ s_1, s_{k+2}\} \cup \{s_3, s_{k+1}, s_4, s_{k}, \dotsc, s_{(k+4)/2-1}, s_{(k+4)/2+1}\} \cup \{s_{2}, s_{(k+4)/2}\},$$ then $m_{1,k+2} = m_{3,k+1} = m_{4,k} = \dotsm = m_{(k+4)/2-1,(k+4)/2+1} = m_{2,(k+4)/2} = \infty $ and $(W_k)_U$, where $U = S \smallsetminus T = \{ s_{k+3}, s_{k+4}\}$, is a spherical Coxeter group. Therefore, Theorem \ref{thm:CartanMatrixOfFullRank} guarantees the existence of a Zariski-dense representation $\rho_k:W_k\to\SL_{k+4}^{\pm}(\mathbb{Z})$ of $W_k$ as a reflection group.
Choose any finite-index orientation-preserving torsion-free subgroup $\Gamma$ of $W_1$. Then $\Gamma$ can be identified with the fundamental group $\pi_1(M)$ of a closed hyperbolic $3$-manifold $M$, and for each $k \geq 1$, the finite-index subgroup $\Gamma \cap W_k$ of $\Gamma$ embeds Zariski-densely in $\mathrm{SL}_{k+4} (\mathbb{Z})$ via $\rho_k$. 

Finally, we observe that the following integral Cartan matrix of rank $4$ is not symmetrizable and is compatible with $W_1$: 
$$
\left(
\begin{array}{rrrrr}
2 & -3 & -1 & 0 & 0 \\
-8 & 2 & -1 & 0 & 0 \\
-2 & -2 & 2 & -1 & -1 \\
0 & 0 & -1 & 2 & -1 \\
0 & 0 & -1 & -2 & 2 \\
\end{array}
\right).
$$
It thus follows from Corollary \ref{cor:integer_representation}, that there is a Zariski-dense representation $ W_1 \rightarrow \mathrm{SL}^{\pm}_{4} (\mathbb{Z})$ of $W_1$ as a reflection group which (necessarily) embeds $\Gamma$ as a Zariski-dense subgroup of $\mathrm{SL}_{4} (\mathbb{Z})$.
\end{proof}

\begin{remark}
Proposition \ref{prop:application-three-manifold} also admits an alternative argument that proceeds roughly as follows. There is a right-angled compact hyperbolic $3$-polytope, known as the L\"{o}bell polytope~$L_6$, such that the discrete subgroup of $\mathrm{O}_{3,1} (\R)$ generated by the reflections in the facets of $L_6$ is conjugate  within $\mathrm{SL}^{\pm}_4(\mathbb{R})$ to a subgroup $\Gamma$ of $\mathrm{SL}^{\pm}_4(\mathbb{Z})$; see \cite{bogachevdouba}. One can now double this polytope along any of its facets $F$ to obtain an index-$2$ reflection subgroup $\Gamma' < \Gamma$, and then ``bend $\Gamma'$ along $F$'' via a well-chosen element of $\mathrm{SL}_4(\mathbb{R})$ such that $\Gamma'$ becomes Zariski-dense in $\mathrm{SL}^{\pm}_4(\mathbb{R})$ but nevertheless remains within $\mathrm{SL}^{\pm}_4(\mathbb{Z})$. One now concludes using Proposition \ref{prop:higher_dimensions}.
\end{remark}

\begin{proposition}\label{prop:application-four-manifold}
	There exists a complete hyperbolic $4$-manifold $M$ of finite volume such that for each $n \geq 5$, there is a finite-index subgroup $\Gamma_n<\pi_1(M)$ that embeds Zariski-densely in $\mathrm{SL}_n(\mathbb{Z})$.
\end{proposition}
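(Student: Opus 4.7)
The plan is to adapt the scheme of Proposition~\ref{prop:application-three-manifold} to dimension $4$. I will exhibit a finite-volume hyperbolic Coxeter $4$-polytope $P_{1}$ (taken, for instance, to be a suitable quasi-Lann\'{e}r Coxeter $4$-simplex with $5$ facets) whose dihedral angles all lie in $\{\tfrac{\pi}{2},\tfrac{\pi}{3},\tfrac{\pi}{4},\tfrac{\pi}{6}\}$ and whose Coxeter diagram $\mathcal{G}_{W_{1}}$ contains a cycle that is not of type $\widetilde{A}_{\ell}$. I will then construct a sequence $P_{k}$ for $k\geq 2$ by iterated reflection of $P_{1}$ across a chosen facet, mirroring the construction $P_{k}=P_{1}\cup w_{2}(P_{1})\cup\cdots\cup w_{k}(P_{1})$ used for the triangular-prism family in Proposition~\ref{prop:application-three-manifold}. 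Each $P_{k}$ will be a finite-volume hyperbolic Coxeter $4$-polytope with $k+4$ facets, so that the reflection group $W_{k}\subset W_{1}$ has rank $k+4$ and is of index $k$ in $W_{1}$.

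The key verification is that each $W_{k}$ satisfies the hypotheses of Theorem~\ref{thm:CartanMatrixOfFullRank}: hypothesis~(\ref{item:mij}) is immediate from the choice of dihedral angles of $P_{1}$; hypothesis~(\ref{item:cycle}) holds because the non-$\widetilde{A}_{\ell}$ cycle in $\mathcal{G}_{W_{1}}$ persists in $\mathcal{G}_{W_{k}}$ after iterated doubling; hypothesis~(\ref{item:delete_edges}) is the delicate one and will be checked by choosing pairs of facets of $P_{k}$ that are disjoint in $\overline{\mathbb{H}^{4}}$ (so joined by an $\infty$-labeled edge in the Coxeter diagram), whose removal decomposes the residual Coxeter diagram into spherical and/or quasi-Lann\'{e}r irreducible components. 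As in Proposition~\ref{prop:application-three-manifold}, the explicit subset $T$ may need to be chosen differently according to the parity of $k$. Theorem~\ref{thm:CartanMatrixOfFullRank} then yields a Zariski-dense representation $\rho_{k}\colon W_{k}\to\mathrm{SL}_{k+4}^{\pm}(\mathbb{Z})$ as a reflection group for every $k\geq 1$.

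To obtain the manifold $M$, I invoke Selberg's lemma to produce a torsion-free, orientation-preserving, finite-index subgroup $\Gamma<W_{1}$, and identify $\Gamma$ with $\pi_{1}(M)$ where $M=\mathbb{H}^{4}/\Gamma$ is a complete finite-volume hyperbolic $4$-manifold. For each $n\geq 5$, the subgroup $\Gamma_{n}:=\Gamma\cap W_{n-4}$ is of finite index in $\Gamma$ and embeds Zariski-densely in $\mathrm{SL}_{n}(\mathbb{Z})$ via $\rho_{n-4}$ (after intersecting with $\mathrm{SL}_{n}$ to ensure the image lies in the orientation-preserving part).

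The main obstacle is the combinatorial choice of $P_{1}$ together with the verification of hypothesis~(\ref{item:delete_edges}) of Theorem~\ref{thm:CartanMatrixOfFullRank} for each $k$. In particular, the chosen facet for iterated doubling must be compatible with the cycle in $\mathcal{G}_{W_{1}}$, in the sense that successive copies glue to produce a diagram still containing a non-$\widetilde{A}_{\ell}$ cycle, and with enough $\infty$-labeled edges so that a subset $T$ as required can be assembled while leaving a residual subdiagram all of whose components are spherical or quasi-Lann\'{e}r. Once $P_{1}$ is fixed, the rest of the argument proceeds in strict parallel with Proposition~\ref{prop:application-three-manifold}.
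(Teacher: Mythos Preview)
Your overall strategy is exactly the one the paper follows, but the concrete choice of $P_{1}$ as a Coxeter $4$-simplex is a genuine obstruction, not just a detail to be filled in. In any $4$-simplex every pair of facets shares a ridge, so every $m_{ij}$ is finite and the Coxeter diagram of $W_{1}$ carries no $\infty$-edge. This breaks the scheme in two places. First, the iterated-reflection family $P_{k}=P_{1}\cup w_{2}(P_{1})\cup\cdots$ with $(w_{2},w_{3},\dotsc)=(s_{2},s_{1}s_{2},\dotsc)$ only produces an unbounded chain of distinct fundamental domains when $s_{1}s_{2}$ has infinite order, i.e.\ when $m_{12}=\infty$; with $m_{12}$ finite the translates cycle and you cannot manufacture Coxeter polytopes of arbitrarily large rank this way. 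Second, hypothesis~(\ref{item:delete_edges}) of Theorem~\ref{thm:CartanMatrixOfFullRank} asks for pairs with $m_{\ell_{p}\ell'_{p}}=\infty$; for the simplex itself you could take $T=\varnothing$ (since $W_{1}$ is quasi-Lann\'er), but for the putative $W_{k}$ with $k\geq 2$ you would need genuine $\infty$-pairs, which you have not arranged to exist. Note also that a $4$-polytope with exactly $5$ facets \emph{is} a simplex, so there is no alternative $5$-facet choice.

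The paper resolves this by taking $Q_{1}$ to be a \emph{tetrahedral prism} (combinatorially tetrahedron~$\times$~interval, six facets), the direct $4$-dimensional analogue of the triangular prism in Proposition~\ref{prop:application-three-manifold}: its two tetrahedral ends $s_{1},s_{2}$ are disjoint ($m_{12}=\infty$), and the remaining four facets are orthogonal to both ends, so the chain $Q_{k}$ really does have $k+5$ facets. Theorem~\ref{thm:CartanMatrixOfFullRank} then handles all $n\geq 6$. The case $n=5$ cannot be reached by this family (since $Q_{1}$ already has rank $6$) and is treated separately: the paper writes down an explicit $6\times 6$ integral nonsymmetrizable Cartan matrix of rank $5$ compatible with $W_{1}$ and invokes Corollary~\ref{cor:integer_representation}. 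You should adjust your plan to use a prism rather than a simplex, and budget for a separate low-rank argument at $n=5$.
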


\begin{proof}
In the proof of Proposition \ref{prop:application-three-manifold}, we introduced the compact hyperbolic Coxeter 3-polytope $P_1$ which is combinatorially the Cartesian product of a triangle and an interval; see Figure \ref{fig:three-dimension}. The Coxeter diagram of $P_1$ is shown on the left of Figure \ref{fig:application-Coxeter-diagram}. 
\newcommand{\scalee}{0.9}
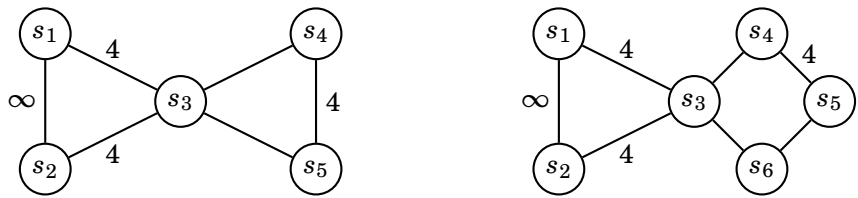
\begin{figure}[ht!]
\centering
\subfloat{
\begin{tikzpicture}[thick,scale=\scalee, every node/.style={transform shape}]
\node[draw,circle, inner sep=3pt, minimum size=2pt] (1) at (0,2) {$s_1$};
\node[draw,circle, inner sep=3pt, minimum size=2pt] (2) at (0,0) {$s_2$};
\node[draw,circle, inner sep=3pt, minimum size=2pt] (3) at (2,1) {$s_3$};
\node[draw,circle, inner sep=3pt, minimum size=2pt] (4) at (4,2) {$s_4$};
\node[draw,circle, inner sep=3pt, minimum size=2pt] (5) at (4,0) {$s_5$};
\draw (1)--(2) node[left, midway] {$\infty$};
\draw (1)--(3) node[above, midway] {$4$};
\draw (2)--(3) node[below, midway] {$4$};
\draw (3)--(4) node[right,midway] {};
\draw (3)--(5) node[right,midway] {};
\draw (4)--(5) node[right,midway] {$4$};
\end{tikzpicture}
}
\quad\quad\quad\quad\quad
\subfloat{
\begin{tikzpicture}[thick,scale=\scalee, every node/.style={transform shape}]
\node[draw,circle, inner sep=3pt, minimum size=2pt] (1) at (0,2) {$s_1$};
\node[draw,circle, inner sep=3pt, minimum size=2pt] (2) at (0,0) {$s_2$};
\node[draw,circle, inner sep=3pt, minimum size=2pt] (3) at (2,1) {$s_3$};
\node[draw,circle, inner sep=3pt, minimum size=2pt] (4) at (3,2) {$s_4$};
\node[draw,circle, inner sep=3pt, minimum size=2pt] (5) at (4,1) {$s_5$};
\node[draw,circle, inner sep=3pt, minimum size=2pt] (6) at (3,0) {$s_6$};
\node[] () at (3.7,1.7) {$4$};
\draw (1)--(2) node[left, midway] {$\infty$};
\draw (1)--(3) node[above, midway] {$4$};
\draw (2)--(3) node[below, midway] {$4$};
\draw (3)--(4) node[] {};
\draw (3)--(6) node[] {};
\draw (4)--(5) node[] {};
\draw (5)--(6) node[] {};
\end{tikzpicture}
}
\caption{The Coxeter diagrams of the hyperbolic $3$-polytope $P_1$ and the hyperbolic $4$-polytope $Q_1$}\label{fig:application-Coxeter-diagram}
\end{figure}

In analogy to the $3$-dimensional case, we consider a noncompact hyperbolic 4-polytope \( Q_1 \) of finite volume, whose Coxeter diagram is shown on the right of Figure \ref{fig:application-Coxeter-diagram}. This polytope is combinatorially the Cartesian product of a tetrahedron and an interval. For \( k \geq 2 \), let \( Q_k \) be the Coxeter polytope  
\[
Q_1 \;\cup\; w_2(Q_1) \;\cup\; w_3(Q_1) \;\cup\; \dotsm \;\cup\; w_k(Q_1),
\]
where \( (w_2, w_3, w_4, w_5, \dotsc) = (s_2, s_1 s_2, s_2 s_1 s_2, s_1 s_2 s_1 s_2, \dotsc) \). Let $W_k$ be the reflection group in $\mathrm{Isom}(\mathbb{H}^4)$ generated by the reflections in the facets of $Q_k$. Then $W_k$ is a subgroup of $W_1$ of index $k$. As in the proof of Proposition \ref{prop:application-three-manifold}, it can be shown that $W_k$ satisfies all the assumptions of Theorem \ref{thm:CartanMatrixOfFullRank}. 

To obtain a Zariski-dense representation $W_1 \rightarrow \mathrm{SL}_5^\pm(\mathbb{Z})$ of $W_1$ as a reflection group, observe that the following integral Cartan matrix of rank $5$ is not symmetrizable and is compatible with $W_1$: 
$$
\left(
\begin{array}{rrrrrr}
2 & -4 & -1 & 0 & 0 & 0 \\
-4 & 2 & -1 & 0 & 0 & 0 \\
-2 & -2 & 2 & -1 & 0 & -1 \\
0 & 0 & -1 & 2 & -1 & 0 \\
0 & 0 & 0 & -2 & 2 & -1 \\
0 & 0 & -1 & 0 & -1 & 2 \\
\end{array}
\right).
$$ 
The remainder of the proof is very similar to the proof of Proposition \ref{prop:application-three-manifold}.
\end{proof}

\begin{proposition}\label{prop:application-aspherical-manifold}
For every $p \geq 4$, there exists a closed aspherical manifold $M_p$ of dimension~$p$ such that for all $n\geq 2p$, there is a finite-index subgroup $\Gamma_n<\pi_1(M_p)$ that embeds Zariski-densely in $\mathrm{SL}_n(\mathbb{Z})$.
\end{proposition}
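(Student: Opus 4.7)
The plan is to mimic the strategy of Propositions \ref{prop:application-three-manifold} and \ref{prop:application-four-manifold}, but in the absence of a hyperbolic model: replace hyperbolic Coxeter polytopes by a Davis-style reflection construction built on the cross-polytope $\partial\diamond^p$. Concretely, for each $p \geq 4$, I would build an irreducible Coxeter group $W_p$ of rank exactly $2p$ whose nerve (the simplicial complex of spherical standard subgroups) is the flag sphere $\partial\diamond^p$, so that the Davis complex $\Sigma_{W_p}$ is a closed aspherical $p$-manifold and a torsion-free finite-index subgroup $\Lambda \leq W_p$ (from Selberg's lemma, available since $W_p$ is linear) gives the desired manifold $M_p = \Sigma_{W_p}/\Lambda$.

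The construction of $W_p$ is as follows. Label the $2p$ generators by antipodal pairs $s_1,s_1',\dotsc,s_p,s_p'$ and declare $m(s_i,s_i')=\infty$ (forcing the $p$ non-edges of $\partial\diamond^p$). On the remaining pairs, I would assign $m=3$ along a single spanning path
\[
s_1 - s_2 - \dotsb - s_p - s_1' - s_2' - \dotsb - s_p',
\]
and $m=2$ on all other pairs. Two features must be verified. First, the Coxeter diagram (path of $m=3$ edges together with the $p$ antipodal $m=\infty$ edges) is connected, so $W_p$ is irreducible. Second, for every $p$-subset $T\subset S$ containing no antipodal pair -- i.e., every facet of $\partial\diamond^p$ -- the restriction of the Coxeter diagram to $T$ is a disjoint union of subpaths (since deleting vertices from a path leaves a forest), hence a spherical Coxeter diagram of type $A_{k_1}\sqcup\dotsb\sqcup A_{k_\ell}$; thus $W_T$ is finite. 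Combined with $W_{\{s_i,s_i'\}}$ being infinite dihedral, this gives nerve $=\partial\diamond^p$, so Davis's theorem provides the closed aspherical $p$-manifold $M_p$.

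For the $n=2p$ embedding, I would apply Theorem \ref{thm:CartanMatrixOfFullRank} to $W_p$: the labels lie in $\{2,3,\infty\}$, the $4$-cycle $s_i-s_{i+1}-s_{i+1}'-s_i'-s_i$ (with labels $3,\infty,3,\infty$) is not of type $\widetilde{A}_k$, and taking $T=S$ (with $q=p$ and $U=\emptyset$) trivially satisfies the third hypothesis. This yields a Zariski-dense representation $\rho_{2p}:W_p\to \SL^{\pm}_{2p}(\Z)$, and the restriction to $\Lambda\cap W_p=\Lambda$ gives a finite-index subgroup of $\pi_1(M_p)$ that embeds Zariski-densely in $\SL_{2p}(\Z)$.

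For each $n>2p$ the plan is to produce a finite-index reflection subgroup $\Gamma_n\leq W_p$ of rank $n$ to which Theorem \ref{thm:CartanMatrixOfFullRank} (or Corollary \ref{cor:integer_representation}) still applies, and then set the desired finite-index subgroup of $\pi_1(M_p)$ to be $\Lambda\cap\Gamma_n$. The subgroups $\Gamma_n$ should be produced inductively by a non-right-angled analogue of Lemma \ref{lem:larger_rank}: conjugating one standard generator $s\in S$ against the subgroup generated by those $S\smallsetminus\{s\}$ introduces genuinely new reflections (indexed by the non-commuting neighbors of $s$) and yields a larger-rank reflection subgroup of finite index, to which the same cycle-based Cartan-matrix recipe applies after one checks that $\widetilde{A}$-avoiding cycles and $\infty$-labeled pairs persist. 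I expect this inductive step, ensuring irreducibility, non-symmetrizability, and full rank of the chosen Cartan matrix at each stage, to be the main technical obstacle, and it is the step most likely to require either an explicit family of polytopal Coxeter groups (as in the $3$- and $4$-dimensional cases) or a careful adaptation of Proposition \ref{prop:next-dimensions-achieved} beyond the right-angled setting.
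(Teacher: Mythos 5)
Your plan for $n = 2p$ is correct and matches the paper's strategy in outline: build an irreducible Coxeter group of rank $2p$ whose nerve is $\partial\diamond^p$ (so the Davis complex is a contractible $p$-manifold with proper cocompact $W$-action), take a torsion-free finite-index subgroup $\Lambda$ to get the closed aspherical $p$-manifold $M_p$, and feed $W_p$ into Theorem~\ref{thm:CartanMatrixOfFullRank}. Your verification that the nerve is $\partial\diamond^p$ (induced subgraphs of a path are linear forests, hence spherical of type $\bigsqcup A_{k_i}$), that the 4-cycle $s_i - s_{i+1} - s_{i+1}' - s_i'$ with labels $3,\infty,3,\infty$ is not $\widetilde{A}_3$, and that $T=S$, $U=\emptyset$ satisfies hypothesis \eqref{item:delete_edges} (the leading coefficient of $\det(\mathscr{A}_t)$ is then $\pm\prod_p \mathscr{A}_{\ell_p\ell_p'}\mathscr{A}_{\ell_p'\ell_p}\neq 0$) are all sound. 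Your Coxeter diagram is different from the paper's (Figure~\ref{fig:application-Coxeter-diagram-cube}: a triangle $\{s_1,s_2,s_3\}$ with labels $\infty,4,4$ followed by a chain $s_3-s_4-\dots-s_{2p}$ alternating $\infty$ and $3$), but for $n=2p$ that makes no difference.

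The gap you flag at $n>2p$ is genuine, and the specific diagram you chose actually makes it worse. The paper's diagram is engineered precisely so that the $\infty$-pair $\{s_1,s_2\}$ has \emph{both} endpoints of degree $2$ in the Coxeter diagram (each of $s_1,s_2$ fails to commute only with the other and with $s_3$). The paper then forms the chamber $C_k$ by stacking $k$ copies of the fundamental chamber $C_1$ along the $\langle s_1,s_2\rangle\cong D_\infty$ direction; because each gluing is across a facet whose stabilizer generator has exactly two non-commuting neighbors, the number of walls grows by exactly $1$ each time, so the resulting finite-index reflection subgroup $W_k$ has rank $2p+k-1$, and the hypotheses of Theorem~\ref{thm:CartanMatrixOfFullRank} are verified for every $W_k$ exactly as in the proof of Proposition~\ref{prop:application-three-manifold}. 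In your diagram, the degree-$2$ vertices are the two path endpoints $s_1$ and $s_p'$, and these are \emph{not} antipodal to each other ($s_1$'s antipode $s_1'$ has degree $3$, as does $s_p'$'s antipode $s_p$); consequently no $\infty$-pair has both endpoints of degree $2$, the $D_\infty$-gluing does not increment the rank by $1$ at every step, and the inductive step you gesture at would not produce reflection subgroups of every rank $n\geq 2p$. So either the diagram must be redesigned (e.g.\ replaced by the paper's, or by any diagram with a degree-$2$ antipodal $\infty$-pair) before the gluing can work, or a genuinely different argument for passing to higher $n$ is needed; your instinct that this is the main obstacle is correct, but the proposal as written does not resolve it.
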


\begin{proof}
Let $p$ be an integer $\geq 4$, and let $W_1$ be the Coxeter group with Coxeter diagram as in Figure \ref{fig:application-Coxeter-diagram-cube}.
\newcommand{\scalee}{0.85}
\begin{figure}[ht!]
\centering
\subfloat{
\begin{tikzpicture}[thick,scale=\scalee, every node/.style={transform shape}]
\node[draw,circle, inner sep=6pt, minimum size=2pt] (1) at (0,2) {$s_1$};
\node[draw,circle, inner sep=6pt, minimum size=2pt] (2) at (0,0) {$s_2$};
\node[draw,circle, inner sep=6pt, minimum size=2pt] (3) at (2,1) {$s_3$};
\node[draw,circle, inner sep=6pt, minimum size=2pt] (4) at (4,1) {$s_4$};
\node[draw,circle, inner sep=6pt, minimum size=2pt] (5) at (6,1) {$s_5$};
\node[draw,circle, inner sep=6pt, minimum size=2pt] (6) at (8,1) {$s_6$};
\node[draw,circle, inner sep=0.5pt, minimum size=2pt] (7) at (11,1) {$s_{2p-2}$};
\node[draw,circle, inner sep=0.5pt, minimum size=2pt] (8) at (13,1) {$s_{2p-1}$};
\node[draw,circle, inner sep=4pt, minimum size=2pt] (9) at (15,1) {$s_{2p}$};
\node (h4) at (9,1) {};
\node (t4) at (10,1) {};
\draw (1)--(2) node[left, midway] {$\infty$};
\draw (1)--(3) node[above, midway] {$4$};
\draw (2)--(3) node[below, midway] {$4$};
\draw (3)--(4) node[above, midway] {$\infty$};
\draw (4)--(5) node[below, midway] {};
\draw (5)--(6) node[above, midway] {$\infty$};
\draw (6)--(h4) node[above, midway] {};
\draw[dotted] (h4)--(t4) node[below, midway] {};
\draw (t4)--(7) node[above, midway] {};
\draw (7)--(8) node[below, midway] {};
\draw (8)--(9) node[above, midway] {$\infty$};
\end{tikzpicture}
}
\caption{A family of Coxeter groups}\label{fig:application-Coxeter-diagram-cube}
\end{figure}
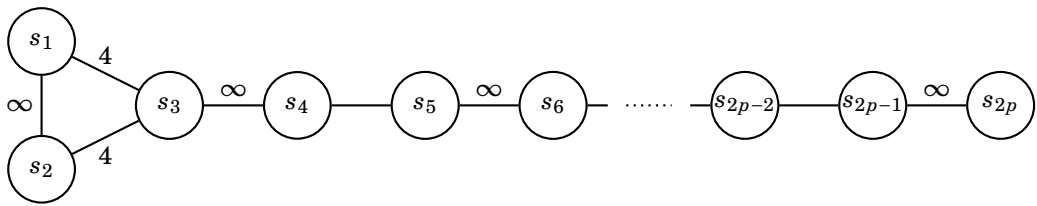

Let $\Sigma_1$ be the Davis complex of $W_1$ (see \cite[Ch.~7]{davis2008geometry}). Then the nerve of $W_1$ is isomorphic to the boundary complex of the dual polytope of the $p$-dimensional cube, hence $\Sigma_1$ is homeomorphic to $\mathbb{R}^p$. Let \( C_1 \) be a fundamental chamber for the action of $W_1$ on $\Sigma_1$. Then $C_1$ is the~$p$-dimensional cube. For \( k \geq 2 \), define \( C_k \) to be the union  
\[
C_1 \;\cup\; w_2(C_1) \;\cup\; w_3(C_1) \;\cup\; \dotsm \;\cup\; w_k(C_1),
\]
where  
$
(w_2, w_3, w_4, w_5, \dotsc) = (s_2, s_1 s_2, s_2 s_1 s_2, s_1 s_2 s_1 s_2, \dotsc).
$
Let \( W_k \) be the subgroup of \( W_1 \) generated by the reflections in the walls of \( C_k \). Then \( W_k \) is a subgroup of \( W_1 \) with index \( k \). As in the proof of Proposition \ref{prop:application-three-manifold}, the Coxeter group \( W_k \) satisfies all the conditions of Theorem~\ref{thm:CartanMatrixOfFullRank}. The remainder of the proof proceeds analogously to the proof of Proposition \ref{prop:application-three-manifold}.
\end{proof}

\section{New witnesses to incoherence of $\mathrm{SL}_n(\mathbb{Z})$}\label{sec:incoherence}

This section is dedicated to the proof of Theorem~\ref{thm:incoherence}. We begin with the following lemma, which is certainly well known. We include a proof for the convenience of the reader.

\begin{lemma}\label{lemma:ends}
	The fundamental group of a finite graph of groups all of whose vertex groups are one-ended and all of whose edge groups are infinite is one-ended.
\end{lemma}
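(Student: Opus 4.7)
The plan is to invoke Stallings' theorem on ends of groups. Write $G$ for the fundamental group of the finite graph of groups in question. Since each vertex group is one-ended (in particular finitely generated) and the underlying graph is finite, $G$ is finitely generated; moreover $G$ is infinite, since it contains the infinite vertex groups as subgroups. Suppose for contradiction that $G$ has more than one end. Stallings' theorem then provides a simplicial action of $G$ on a tree $T'$, without a global fixed vertex, such that every edge stabilizer is finite.

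Consider alongside $T'$ the Bass--Serre tree $T$ associated to the given graph-of-groups decomposition of $G$: the $G$-stabilizers of vertices of $T$ are conjugates of the vertex groups, hence one-ended, and the $G$-stabilizers of edges of $T$ are conjugates of the edge groups, hence infinite. The key auxiliary claim I would establish first is that every one-ended subgroup $H \leq G$ has a unique fixed vertex in $T'$. For existence, Bass--Serre theory produces, from any action of a finitely generated group on a tree with no global fixed vertex, a nontrivial splitting of the group with edge groups among the edge stabilizers of that tree; since all edge stabilizers in $T'$ are finite, such a splitting would contradict one-endedness of $H$ via Stallings. For uniqueness, the fixed-point set $T'^{H}$ is a subtree of $T'$, and if it contained an edge then $H$ would be finite, contradicting one-endedness.

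Using the claim, define $\phi \colon V(T) \to V(T')$ by sending each vertex $v$ to the unique vertex of $T'$ fixed by the one-ended stabilizer $G_v$. Uniqueness of fixed points makes $\phi$ automatically $G$-equivariant. Given any edge $e \in T$ with endpoints $v, w$, the edge stabilizer $G_e \leq G_v \cap G_w$ fixes both $\phi(v)$ and $\phi(w)$, hence pointwise fixes the geodesic between them in $T'$. Since $G_e$ is infinite while every edge stabilizer in $T'$ is finite, this geodesic can contain no edges, so $\phi(v) = \phi(w)$. Connectedness of $T$ then forces $\phi$ to be constant, yielding a global $G$-fixed vertex in $T'$ and contradicting the nontriviality of the action. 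The main obstacle is the auxiliary claim about one-ended subgroups; this is where Stallings' theorem does the real work, and the rest of the argument is a clean tree-combinatorial synchronization of the two actions.
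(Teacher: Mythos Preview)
Your proof is correct. Both your argument and the paper's rest on the same three ingredients: Stallings' theorem supplies a $G$-tree $T'$ with finite edge stabilizers; any one-ended subgroup of $G$ must fix a (unique) vertex of $T'$; and an infinite subgroup fixing two vertices of $T'$ forces them to coincide. The paper organizes these via an induction on the number of edges in the graph of groups: at each step one edge is removed, and by the inductive hypothesis the resulting piece(s)---two subgroups in the amalgam case, or the base group and its conjugate by the stable letter in the HNN case---each fix a vertex of $T'$, which the infinite edge group then forces to be the same vertex. You instead work with the whole Bass--Serre tree $T$ at once, defining the equivariant map $\phi\colon V(T)\to V(T')$ and showing it collapses every edge; this does in a single pass what the paper's induction accomplishes edge by edge. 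Your packaging is somewhat slicker and scales transparently to arbitrary finite graphs of groups without case distinctions, while the paper's version is slightly more hands-on and avoids explicitly invoking the Bass--Serre tree.
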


\begin{proof}
	We proceed by induction on the number of edges in the graph of groups. In the absence of any edges, the statement trivially holds. Now suppose that for some $k \geq 0$, the fundamental group of any graph of groups as in the statement of Lemma \ref{lemma:ends} with $\leq k$ edges is one-ended, and let $\mathcal{G}$ be a graph of groups as in the statement of Lemma \ref{lemma:ends} with precisely $k+1$ edges.
	
	We use Stallings' characterization \cite{stallings1971} of a one-ended group. Supposing that $\Gamma:= \pi_1(\mathcal{G})$ acts by automorphisms on a tree $T$ without edge inversions and with edge stabilizers all finite, we will show that $\Gamma$ fixes a vertex in $T$. Stallings' theorem then implies that $\G$ is one-ended.
	
	Pick an edge $e$ of $\mathcal{G}$ and let $\Gamma_e < \Gamma$ be the corresponding edge group. Suppose first that~$e$ disconnects $\mathcal{G}$ into two components $\mathcal{G}_1$ and~$\mathcal{G}_2$. By the induction hypothesis, we have that~$\pi_1(\mathcal{G}_i)$ is one-ended and hence fixes a vertex $v_i$ of $T$ for $i=1,2$. Hence $\Gamma_e$ fixes the entire path between $v_1$ and $v_2$ since $\Gamma_e \subset \pi_1(\mathcal{G}_i)$ for $i=1,2$. If $v_1 \neq v_2$, this implies that $\Gamma_e$ fixes at least one edge of $T$, and hence that $\Gamma_e$ is finite, a contradiction. Thus $v_1=v_2$ is fixed by $\pi_1(\mathcal{G}_i)$ for $i=1,2$, and hence by $\Gamma$ since $\Gamma = \langle \pi_1(\mathcal{G}_1),  \pi_1(\mathcal{G}_2) \rangle$.
	
	Now suppose the complement $e$ in $\mathcal{G}$ is a connected graph $\mathcal{H}$, and let $\Delta = \pi_1(\mathcal{H}) < \Gamma$. Then we may view $\Gamma$ as an HNN extension $\Delta *_{\Gamma_e}$ with stable letter some element $t \in \Gamma$. By the induction hypothesis, we again have that $\Delta$ fixes a vertex $v$ of $T$, and hence $t \Delta t^{-1}$ fixes the vertex $tv$. We conclude that $\Gamma_e$ fixes the entire path between $v$ and $tv$ since $\Gamma_e \subset \Delta \cap t \Delta t^{-1}$. If $tv \neq v$, this implies that $\Gamma_e$ fixes at least one edge of $T$, and hence that $\Gamma_e$ is finite, a contradiction. Thus $tv=v$, and so $\Gamma$ fixes $v$ since $\Gamma = \langle \Delta, t \rangle$.
\end{proof}

\begin{proof}[Proof~of~Theorem~\ref{thm:incoherence}]
	We adapt an idea due to Bowditch and Mess \cite{MR1240944}. Let $W_S$ be the Coxeter group associated to a compact hyperbolic triangular Coxeter prism $P \subset \mathbb{H}^3$ possessing a facet $F$ orthogonal to all adjacent facets, and let $s \in S$ correspond to the facet $F$. We require moreover that $F$ be a $(p,q,r)$-triangle for $p, q, r \in \{3,4,6\}$. For each $m \geq~1$, let $W_m$ be the Coxeter group obtained from $W_S$ by adjoining to $S$ pairwise commuting involutions $s_1, \ldots, s_m$ such that $s_i$ commutes with an element of $s' \in S$ if $s$ commutes with $s'$ and otherwise shares no relation with $s'$; see Figure~\ref{fig:proof_for_one-ended} for an example. Then $W_m$ retracts onto the finite standard subgroup $\langle s, s_1, \ldots, s_m \rangle < W_m$. The kernel of this retraction is a reflection subgroup of $W_m$ that we may view as the fundamental group of an orbicomplex $R_m$ obtained by gluing $2^m$ copies of a compact hyperbolic reflection orbifold $O$ along a closed embedded totally geodesic hypersurface $\Sigma \subset O$. (The orbifold $O$ is obtained by doubling $P$ across $F$, and $\Sigma$ is precisely the ``forgotten'' facet $F$.)

\newcommand{\scalee}{0.9}
\begin{figure}[ht!]
\centering
\subfloat{
\begin{tikzpicture}[thick,scale=\scalee, every node/.style={transform shape}]
\node[draw,circle, inner sep=5pt, minimum size=2pt] (1) at (0,2) {$\phantom{s_!}$};
\node[draw,circle, inner sep=5pt, minimum size=2pt] (2) at (0,0) {$\phantom{s_!}$};
\node[draw,circle, inner sep=5pt, minimum size=2pt] (3) at (2,1) {$\phantom{s_!}$};
\node[draw,circle, inner sep=5pt, minimum size=2pt] (4) at (4,1) {$\phantom{s_!}$};
\node[draw,circle, inner sep=6.8pt, minimum size=2pt] (5) at (6,3) {$s$};
\node[draw,circle, inner sep=5pt, minimum size=2pt] (6) at (6,1.9) {$s_1$};
\node[draw,circle, inner sep=0.5pt, minimum size=2pt] (7) at (6,0.1) {$s_{m-1}$};
\node[draw,circle, inner sep=4pt, minimum size=2pt] (8) at (6,-1) {$s_{m}$};
\node (h5) at (5,1.3) {};
\node (t5) at (5,0.7) {};
\node (h6) at (6,1.3) {};
\node (t6) at (6,0.7) {};

\draw (1)--(2) node[left, midway] {$4$};
\draw (1)--(3) node[above, midway] {};
\draw (2)--(3) node[below, midway] {};
\draw (3)--(4) node[above, midway] {};
\draw (4)--(5) node[above, midway] {$\infty$};
\draw (4)--(6) node[above, midway] {$\infty$};
\draw (4)--(7) node[below, midway] {$\infty$};
\draw (4)--(8) node[below, midway] {$\infty$};

\draw[dotted] (h5)--(t5) node[above, midway] {};
\draw[dotted] (h6)--(t6) node[above, midway] {};
\end{tikzpicture}
}
\caption{A Coxeter group $W_m$ as in the proof of Theorem \ref{thm:incoherence}}\label{fig:proof_for_one-ended}
\end{figure}
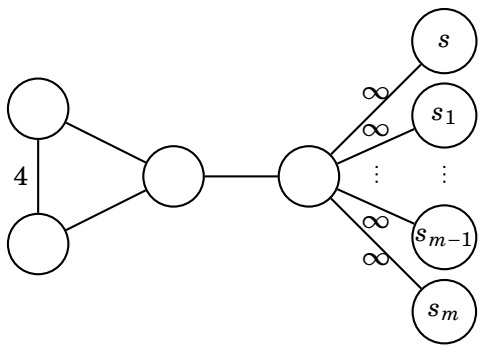

	Let
 $M$ be a finite cover of $O$ that topologically fibers over the circle (that such a cover exists for compact hyperbolic reflection $3$-orbifolds is due to Haglund--Wise \cite{haglund2010coxeter} and Agol \cite{agol2008criteria}). Then $R_m$ is finitely covered by a complex $C_m$ obtained by gluing $2^m$ copies of $M$ along a collection of disjoint closed embedded totally geodesic hypersurfaces (namely, the lifts of $\Sigma$). A fibration of $M$ gives rise to an infinite cyclic cover $\widehat{C_m}$ of $C_m$ whose fundamental group $\Gamma_m$ is finitely generated; indeed, we may view $\Gamma_m$ as the fundamental group of a finite graph of groups whose vertex groups are closed hyperbolic surface groups (and whose edge groups are infinite-rank free groups). Since $C_m$ has nonzero Euler characteristic, the homology of $\widehat{C_m}$ is nevertheless infinite-dimensional; see Milnor \cite{MR0242163}. The latter implies that $\Gamma_m$ is not finitely presented, since $\widehat{C_m}$ has the homotopy type of a $2$-complex. Finally, note that $\Gamma_m$ is one-ended by Lemma \ref{lemma:ends}.

By Theorem \ref{thm:CartanMatrixOfFullRank}, we may realize $W_m$ as a Zariski-dense subgroup of $\mathrm{SL}_{5+m}^\pm(\mathbb{Z})$. Since~$\Gamma_m$ is an infinite normal subgroup of a finite-index subgroup of $W_m$, it then follows from simplicity of $\mathrm{SL}_{5+m}(\mathbb{R})$ that the Zariski-closure of $\Gamma_m$ in $\mathrm{SL}_{5+m}^\pm(\mathbb{R})$ contains $\mathrm{SL}_{5+m}(\mathbb{R})$, so that ${\Gamma_m\cap\SL_{5+m}(\mathbb{Z})}$ satisfies the conclusion of the theorem.
\end{proof}

\begin{remark}
We describe another construction of a one-ended Zariski-dense witness $\Gamma_n$ to incoherence of $\mathrm{SL}_n(\mathbb{Z})$ for each $n \geq 120$ that uses forthcoming work of Fisher--Italiano--Kielak~\cite{fisher2025virtual}. The group~$W$ generated by the reflections in the sides of the right-angled $120$-cell in $\mathbb{H}^4$ virtually admits a map onto $\mathbb{Z}$ with finitely generated kernel $\Gamma$ \cite{jankiewicz2021virtually, kielak2020residually}. It is known however that such~$\Gamma$ cannot be finitely presented; see, for instance, \cite[Prop.~14]{isenrich2021hyperbolic}. Moreover, such~$\Gamma$ is one-ended by \cite{fisher2025virtual}. By Theorem~\ref{thm:virtually-Zariski-dense}, for each $n \geq 120$, there is a finite-index subgroup $\Delta_n$ of $W$ and a Zariski-dense faithful representation $\rho_n : \Delta_n \rightarrow \mathrm{SL}_n(\mathbb{Z})$. We may now take $\Gamma_n = \rho_n(\Delta_n \cap \Gamma)$. 
\end{remark}

\printbibliography

\end{document}